\def\red{\textcolor{red}}
\newif\ifshownotes
\newcommand{\draftnote}[1]
{
  \ifshownotes
   \footnote{\red{#1}}
  \else
  \fi
}
\numberwithin{equation}{section}
\newtheorem{theorem}{Theorem}[section]
\newtheorem{lemma}[theorem]{Lemma}
\newtheorem{proposition}[theorem]{Proposition}
\newtheorem{corollary}[theorem]{Corollary}
\theoremstyle{definition}
\newtheorem{example}[theorem]{Example}
\newtheorem{definition}[theorem]{Definition}
\newtheorem{remark}[theorem]{Remark}
\newtheorem{assumption}{Assumption}
\newcommand{\R}{\stsets{R}}
\newcommand{\N}{\stsets{N}}
\def\E{{\mathbb E}}
\def\R{{\mathbb R}}
\def\N{{\mathbb N}}
\def\P{{\mathbb P}}
\def\Z{{\mathbb Z}}
\def\D{{\mathcal D}}
\newcommand{\Fmb}{{\mathbb{F}}}
\newcommand{\Qmb}{{\mathbb{Q}}}
\newcommand{\Rmb}{{\mathbb{R}}}
\newcommand{\Bmc}{{\mathcal{B}}}
\newcommand{\Dmc}{{\mathcal{D}}}
\newcommand{\Emc}{{\mathcal{E}}}
\newcommand{\Fmc}{{\mathcal{F}}}
\newcommand{\Hmc}{{\mathcal{H}}}
\newcommand{\Jmc}{{\mathcal{J}}}
\newcommand{\Lmc}{{\mathcal{L}}}
\newcommand{\Mmc}{{\mathcal{M}}}
\newcommand{\Pmc}{{\mathcal{P}}}
\newcommand{\Smc}{{\mathcal{S}}}
\newcommand{\Tmc}{{\mathcal{T}}}
\newcommand{\Umc}{{\mathcal{U}}}
\newcommand{\Vmc}{{\mathcal{V}}}
\newcommand{\Wmc}{{\mathcal{W}}}
\newcommand{\Ymc}{{\mathcal{Y}}}
\newcommand{\Xhat}{\hat{X}}
\newcommand{\gbar}{{\bar{g}}}
\newcommand{\Psibar}{{\bar{\Psi}}}
\newcommand{\G}{\mathcal{G}}
\newcommand{\Erdos}{Erd\H{o}s-R\'enyi}
\newcommand{\CM}{\textnormal{CM}}
\newcommand{\set}[1]{\left\{#1\right\}}
\newcommand{\Rd}{{\Rmb^d}}
\newcommand{\one}{{\boldsymbol{1}}}
\newcommand{\Xtil}{{\tilde{X}}}
\def\root{\varnothing}
\newcommand{\natzero}{\N}
\newcommand{\law}{\Lmc}
\DeclareSymbolFont{symbolsC}{U}{pxsyc}{m}{n}
\DeclareMathSymbol{\coloneqq}{\mathrel}{symbolsC}{"42}
\newcommand{\deq}{\,{\buildrel d \over =}\,}
\newcommand{\V}{\mathbb{V}}
\newcommand{\cl}{\text{cl}}
\newcommand*\diff{\mathop{}\!\mathrm{d}}
\DeclareMathOperator{\Supp}{Supp}
\newcommand{\stateS}{\mathcal{X}}
\newcommand{\X}{\Ymc}
\newcommand{\Nlf}{\mathbf{N}}
\newcommand{\tree}{\Tmc}
\newcommand{\Nmf}{\mathbf{N}}
\newcommand{\Xlf}{\Xtil}
\newcommand{\Xmf}{\Xhat}
\newcommand{\emp}{\mu}
\newcommand{\lawMF}{\mathfrak{p}}
\renewcommand{\r}{\rho}
\newcommand{\rlf}{\tilde{\r}}
\newcommand{\rmf}{\hat{\r}}
\newcommand{\rmp}{\gamma}
\newcommand{\extra}{\star}
\newcommand{\offspring}{\theta}
\newcommand{\dmax}{d_{\max}}
\newcommand{\ev}{\mathbf{e}^v}
\newcommand{\rr}{\mathbf{r}}
\newcommand{\countm}{\nu}
\newcommand{\YLE}{\Xlf}
\newcommand{\supoff}{\Theta}
\newcommand{\excit}{\alpha}
\newcommand{\tauone}{\bar{\tau}}
\newcommand{\Xlfinitial}{Y}
\newcommand{\Xmfinitial}{\Xlfinitial}
\renewcommand{\k}{K}
\newcommand{\empirdegree}{\bar{\offspring}}
\newcommand{\vertexS}{\mathcal{V}}
\newcommand{\Xmp}{\hat{Z}}
\newcommand{\Xjmp}{\widetilde{Z}}
\newcommand{\Xjmpinitial}{\Xjmp}
\newcommand{\ra}{\lambda}
\newcommand{\rjmp}{\widetilde{\ra}}
\newcommand{\rmjmp}{\hat{\ra}}
\newcommand{\Xcanon}{\bar{Z}}
\newcommand{\Gen}{\mathcal{A}}
\def\acknowledgementsname{Acknowledgments}
\newenvironment{acks}[1][\acknowledgementsname]{\section*{#1}}{\par}
    \renewenvironment{acks}[1][\acknowledgementsname]%
        {%
            \vskip0.5\baselineskip
            \small
            {\noindent\normalfont\sffamily\bfseries\acknowledgementsname}\par
            \begingroup\parindent 0pt\parskip 0.5\baselineskip
        }%
        {\endgroup}
\def\fundingname{Funding}
\newenvironment{funding}[1][\fundingname]{\begin{acks}[\fundingname]}{\end{acks}}
\title[Tractable description of hydrodynamic limits of IPS on sparse graphs]{Tractable description of hydrodynamic limits of a class of interacting jump processes on sparse graphs}
\author[Cocomello]{Juniper Cocomello}     \author[Davydov]{ Michel Davydov}     
\author[Ramanan]{Kavita Ramanan}
\address{Division of Applied Mathematics, Brown University, 182 George Street, Providence, RI 02912} 
\email{juniper\_cocomello@brown.edu, michel\_davydov@brown.edu,kavita\_ramanan@brown.edu, }
\keywords{interacting particle systems; continuous time Markov chains; local-field; random graphs; sparse graphs; Markovian projection; Markov random field}
\subjclass{Primary: 60K35 Secondary: 60J74, 60J27}
\begin{document}
\begin{abstract} 
We  consider dynamics of the  empirical measure of vertex neighborhood states of 
 Markov interacting jump processes on  sparse random graphs, in a  suitable asymptotic limit as the graph size goes to infinity. 
  Under the assumption of a  certain acyclic structure on single-particle transitions, we provide a tractable autonomous description of the evolution of 
  this hydrodynamic limit in terms of a finite coupled system
  of ordinary differential equations. 
  Key ingredients of the proof include a  characterization of the hydrodynamic limit of the neighborhood empirical measure
  in terms of a certain local-field equation,  well-posedness of its Markovian projection, and a Markov random field property of the time-marginals,
  which may be of independent interest.    We also show how our results lead to   principled approximations for classes of interacting jump
  processes  and illustrate its efficacy via simulations on several examples, including an idealized model of seizure spread in the brain.     
\end{abstract}

\maketitle

\section{Introduction} 
\label{sec:intro}
\subsection{Interacting particle systems on graphs} 
\label{subsec:intro_IPS_on_graphs}
We consider large collections of randomly evolving interacting pure jump processes whose transition rates depend only on their own state and that of the states of neighboring particles with respect to an underlying interaction graph.
Such particle systems model a wide range of phenomena, including neuronal spiking models in the brain where particles represent neurons and the
interaction graph is a brain network \cites{truccolo_2005,Fournier_Locherbach_2016},  epidemiological compartmental models,
where individuals become infected depending on the states of their neighbors in a social contact network \cite{Pastor_Satorras_2015}, 
models of opinion dynamics
\cites{FraiLinOlvera24, AndOlvera24, baldassarri2024opiniond, RemcoShneer23}, or
load balancing in computer networks, where particles represent queues and the interaction graph is determined by the routing structure \cites{VveDobKar96,Mit01,AghRam17}. 
Key quantities of interest include macroscopic averages of functionals of the state such as the fractions of neighboring vertices in a given pair of states at a given time, or functionals of the histories such as the 
fractions of vertices that have no transitions in a time interval.  
These can be captured by the dynamics of the empirical measure  of the states of a vertex and the vertices in its neighborhood, or the path empirical measure of the process, respectively.  
These are complex high-dimensional processes that are in general not amenable to exact analysis.
A natural alternative is then to find tractable approximations that can be rigorously shown to be accurate in a relevant asymptotic limit. 

Our approach is based on the framework of local-field limits, which was introduced in the context of interacting diffusions in \cites{lacker2023local,lacker2023marginal}, and developed for interacting continuous time Markov chains in \cites{GanRam2024,Gangulythesis,GanRamTree,GanRamUGW}. 
Specifically, it was shown in \cite{GanRam2024}*{Corollary 4.12}  that for a very broad class of interacting jump processes taking values in a discrete state space, given a sequence of  graphs $G_n$ that converges locally to a limit graph $G$, the corresponding sequence of path empirical measures on $G_n$ converges in distribution to the law of the root neighborhood dynamics on the limit tree $G$ (see  \cites{BenjaminiSchramm,aldous-lyons} for a definition of local convergence).  When the limit graph $G$ is a regular tree  or, more generally, a unimodular Galton-Watson tree,  it is shown in \cites{Gangulythesis,GanRamTree} and \cite{GanRamUGW} that the root neighborhood marginal dynamics of the interacting process on $G$ can be autonomously characterized by a stochastic equation called the local-field equation.  
This characterization is very useful because for many classes of random graphs of interest, including the configuration model, which includes the class of random regular graphs, and Erd\"os-R\'enyi graphs, the limit graph $G$ is a unimodular Galton-Watson tree \cites{RemcoBook, GanRam2024}, and thus the local-field equation serves as an approximation of the path empirical measure of interacting jump processes on finite sparse (random) graphs. However, even when the original dynamics are Markov, in general the local-field equation describes a trajectory-dependent (non-Markovian) stochastic process that is nonlinear in the sense that its evolution depends on the law of its past history. More precisely, the autonomous characterization of the root neighborhood marginal dynamics by the local-field equation replaces the transition rates of the neighbors of the root particle by conditional rates, where the conditioning is done on the full trajectory of the root neighborhood dynamics.

A desirable goal is then to identify when the local-field equation can yield  more tractable descriptions of the limit marginal process. A natural idea is to replace the aforementioned conditioning on the full trajectory of the root neighborhood dynamics by a conditioning on the present state to obtain a Markov local-field equation \cite{GanRamTree}. For a certain class of interacting particle systems, it was shown that the solutions to the local-field equation and to the Markov local-field equation coincide.  
In \cite{cocomello2023exact}, this is established for SIR and SEIR dynamics, a sub-class of interacting particle systems widely used to model epidemics. More complex epidemics dynamics are considered in \cite{cocomello2024generalized}. A key feature of these models is a certain  pairwise linear property of the interactions: the rate at which an individual is infected is the sum of the rates at which the disease is spread by each of the individual's infected neighbors. 
The derivation of the local-field equation in \cites{GanRamTree, GanRamUGW} depended crucially on a second-order Markov random field (2-MRF) property of trajectories of interacting jump processes \cite{GanRamMRF}.
The simplification of the local-field equation for the epidemic dynamics studied in \cites{cocomello2023exact,cocomello2024generalized}
follows from a stronger conditional independence property: any two subpopulations divided by a finite single boundary of susceptible (never infected) individuals are independent of each other.

The goal of this article is to establish a tractable description of the hydrodynamic limit of a larger class of processes. 
One motivating example is a probabilistic model of neurological seizures introduced in \cite{MoosTrucc23}. In this model, active neurons can excite their neighbors causing them to activate, with inactive neurons having a hindering effect on that excitement. This results in spreading dynamics with transition rates that are not pairwise linear. An idealization of this model is given in Section \ref{subsubsec:neural_SIR}. Other applications with non pairwise linear interactions include certain majority-vote dynamics and a variant of multivariate Hawkes processes, such as the models given in Sections \ref{subsubsec:voter_model} and \ref{subsubsec:Hawkes}.

In this article, we show that for a class of dynamics exhibiting a monotonic property for their state transitions, the solution to the Markov local-field equations has the same time marginals as the solution to the local-field equations, which in turn coincide with those of the original process. Unlike previously-established results for SIR models \cite{cocomello2023exact} which required a pairwise linear structure in the interactions, the laws of the trajectories can in general differ. The Markov local-field equation thus accurately describes the empirical distribution of the particle states in a tractable way, at the cost of not capturing the dynamics of the full trajectories. The hydrodynamic limit can then be characterized through the solution to the system of Kolmogorov forward ODEs associated with the Markov process thus obtained.

%If instead of the path empirical measure,  one is only interested in the neighborhood empirical measure process, which captures the dynamics of the empirical measures of the states of a vertex and its neighborhood at any time, 
Our approach relies on two crucial ingredients: a general Markovian projection result for pure jump processes, which we apply to the local-field equations, and a conditional independence property. The Markovian projection approach relies on finding a Markov process with the same marginal laws as the process of interest. This approach stems from a long line of work started by \cite{gyongy1986mimicking}, which we detail in Section \ref{subsec:proof_outline_mimicking}. In general, a Markovian projection of the local-field equations would describe the limit empirical measure process, but its transitions depend on the law of the local-field equations. We show that for a certain class of interacting particle systems, using a conditional independence property related to the graph structure for the time-marginals of the laws of the particles, it is in fact possible to obtain an autonomous tractable characterization of the hydrodynamic limit. This conditional independence property is a time-marginal version of the aforementioned second-order MRF property. In order to establish this, we first derive a trajectorial second-order MRF property up to a class of stopping times, which is of independent interest.

\subsection{Comparison with alternative approaches}
\label{sec:intro_stateoftheart}
The most common approach to obtain a tractable characterization of the hydrodynamic limit of interacting particle systems is known as the mean-field approximation. It is obtained by considering dynamics on the complete graph where interactions are scaled inversely proportionally to the number of vertices. For a large class of models, a tractable description of the hydrodynamic limit in the infinite-particle system is obtained through a McKean-Vlasov equation \cites{McKean_66, Dobrushin1979}.

A significant drawback of this approach is that the dependence on the underlying graph structure, as well as correlations between particles, are lost in the mean-field limit. A significant recent effort on the literature has been to explore different ways to circumvent this limitation of the mean-field framework. When the underlying graph is dense, the properties of graphons have been used to derive new limit equations \cites{ZAN_2022, jabin2022}, that take the form of an infinite system of ODEs. An alternative approach to describe large interacting particle systems is based on the so-called Poisson Hypothesis. Popularized by Kleinrock for large queueing systems \cite{Kleinrock_1}, this prescribes that the flow of arrivals to a given node can be approximated by a Poisson flow. To extend this to agent-based models, one interprets the flow of arrivals in a queue as the effect of interactions on a given particle. Under the Poisson Hypothesis, the behavior of each particle is described by a stochastic differential equation (SDE), but the particles are considered independent and interaction times are replaced by Poisson processes. This allows for tractability in certain models such as certain queueing models \cite{Vladimirov_2018} and intensity-based models from computational neuroscience \cites{Baccelli_2019,Dav24}. 

\subsection{Organization of the article}

The rest of this article is organized as follows: in Section \ref{sec:main_result}, we introduce the general class of dynamics we will be considering and state our main result. In Section \ref{sec:ram_and_simu}, we describe how to use our result to approximate dynamics on a given graph and provide examples of applications, illustrated with simulations. In Section \ref{sec:proof_outline}, we restate our result and assumptions in a technically tighter fashion and give an outline of the proof, which relies on establishing a time-marginal conditional independence property and the well-posedness of a Markovian projection of the dynamics. In Sections \ref{sec:proof_2-MRF(t)} and \ref{sec:Markov_proj_proof}, we detail the proofs of these two results. In Section \ref{sec:ode_well-posed} we establish the well-posedness of the ODE used in our main result.

\subsection{Notation}
\label{sec:notation} We briefly overview common notation used throughout the paper. We use $G=(V,E)$ to denote a graph with vertex set $V$ and edge set $E$. When clear from context, we identify a graph with its vertex set, and so for a vertex $v$ we might write $v\in G$ instead of the more accurate $v\in V$.
 Given $v\in G$, we write $\partial_v:=\{w\in V : \{v,w\}\in E\}$ for the \textit{neighbors} of $v$. We also define $\cl_v:=\partial_v \cup \{v\}$. The \textit{degree} of a vertex is defined as $d_v:=|\partial_v  |$.

Given a set $\Ymc$, a configuration $y\in \Ymc^V$ 
 and $A\subset V$, we write $y_A:= \{y_v\ : \ v\in A\}$, and we follow the same convention for random elements $Y$ in $\Ymc^V$. Given a probability space $(\Omega,\Fmc,\P)$, we denote by $\law(Y)$ the law of a $\Omega-$valued random variable $Y$. For random variables $Y,$ $U,$ $W$, we write $U\perp Y$ to mean that $U$ and $Y$ are independent, and $ U\perp Y | W$ to mean that $U$ and $Y$ are conditionally independent given $W$.
 We denote $x\wedge y$ (resp. $x\vee y$) the minimum (resp. the maximum) of $x$ and $y.$

Given an interval $\Smc \subset \R$  and $\Mmc$ a metric space, let $\Dmc(\Smc:\Mmc)$ be the space of càdlàg functions equipped with the Skorokhod topology. Throughout, we will fix a set $\stateS\subset \N$ and we write $\Dmc:=\Dmc([0,\infty):\stateS)$ and for a set $A$, $\Dmc^A:=\Dmc([0,\infty):\stateS^A)$.

\section{Main result}
\label{sec:main_result}
Our main result, Theorem \ref{thm:main}, provides a tractable ODE characterization of the time marginals of the typical particle neighborhood of a class of Markov pure-jump interacting particle systems (IPS) on Galton-Watson trees. As a corollary, we obtain a characterization of the hydrodynamic limit of the path empirical measure of a sequence of IPS on finite-size configuration model graphs.

\subsection{Model description and basic assumptions}
We consider interacting particle systems (IPS) on unimodular Galton-Watson (UGW) trees, which are of interest since they arise as local limits of many random graph sequences that model real-world networks, including sparse \Erdos\ graphs and
configuration models. We first define the class of UGW trees we consider.

\begin{assumption}[Graph structure]
\label{Ass:A_1}
$G$ is a unimodular Galton-Watson tree with offspring distribution $\theta$ satisfying $|\Supp(\theta)|<\infty$, that is, its root has offspring distribution $\theta$ and each vertex of all subsequent generations has a number of offspring which is independent of the degree of other vertices in the same or previous generations, sampled according to the \textit{size-biased} distribution $\hat{\theta}$  given by
\begin{equation}
    \label{eq:UGW_offspring_dist}
    \hat{\theta}(k)=\frac{(k+1)\theta(k+1)}{\sum_{n\geq 1}n\theta(n)}, k \in \N.
\end{equation}
\end{assumption}

Unimodularity is a symmetry property that can be roughly understood as requiring that the random, potentially infinite graph $G$ looks the same from any vertex. A rigorous definition of unimodularity is deferred to Section \ref{subsec:unimod} to lighten exposition.

To describe IPS on random graphs, it will be useful to introduce a labeling of the vertex set, known as the   Ulam-Harris-Neveu labeling, which identifies a realization $\Tmc$ of the UGW with subgraph of the graph of all possible vertices. The latter has vertex set $\V:=\{\root\} \cup (\cup_{k=1}^\infty \N^k)$, where $\root$ denotes the root, and edges $\{\{v,vi\}: v\in \V, i\in\N\}$, where $vi$ denotes concatenation, with the convention $\root u =u \root =u$ for all $u\in\V$. 

A tree $T$ with root $\root_{T}$ is identified (uniquely up to root-preserving automorphisms) to a subgraph of $\V$ via a map $\Vmc$ from the vertex set of $T$ to $\V$ such that 
\begin{enumerate}[label=(\roman*)]
    \item $\Vmc(\root_T)=\root$;
    \item $\Vmc(\partial_{\root_T})= \{m\in\N \ : \ m\leq d_{\root_T}\}$;
    \item for $v
    \in\Tmc$ at graph distance $k\in\N$ from $\root_T$, $\Vmc(v)=u \in\N^{k}$ and $\Vmc(\partial_v)=\{\pi_v\}\cup\{vm \ : \ m\in\N,  m\leq d_v-1\}$, where $\pi_v$ is the unique $w\in\V$ such that there exists $k\in \N$ with $wk=v$.
    \end{enumerate}

We focus on Markov IPS consisting of a collection of pure jump càdlàg stochastic processes indexed by the set $\V$, where each process describes the evolution of a particle that takes values in a finite set $\stateS$, and whose allowable transitions as jumps lie in a set $\Jmc\subset \stateS - \stateS\setminus\{0\}$. Without loss of generality, we assume that $\stateS\subset\natzero$. In order to represent marks on a tree $T \subset \V$ we consider a new mark $\extra$, and define $\stateS_{\extra}:=\stateS\cup\{\extra\}$. Given $x\in\stateS^T$, we extend it to an element in $ \stateS_{\extra}^T$  by setting $x_w=\extra$ for all $w\in\V \setminus T$. 

For reasons that will become apparent later, we will find it convenient to characterize the IPS as the solution to a jump SDE, rather than via its generator.

\begin{definition}\label{def:IPS}
    Let $\tree$ be a UGW tree with random vertex set $V\subset \V$ (under the Ulam-Harris-Neveu labeling), and random edge set $E$. We define the process $X=\{X_v\}_{v\in \V}$ as the solution of the following SDE:

\begin{equation}
\label{eq:IPS_SDE}
    X_v(t)= X_v(0) + \sum_{j \in \Jmc}\int_{(0,t)\times \R_+} j  \one_{\{u<\r^{j}(s,X_{v}(s-),X_{\partial v}(s-))\}} \mathbf{N}^j_v(\diff s,\diff u),\  v\in \V,
\end{equation}
where 
\begin{itemize}

    \item for each  $j \in \Jmc,$ the rate $\r^j:[0,\infty)\times \stateS_\extra  \times \cup_{n=0}^{\dmax} \stateS^n \rightarrow [0,\infty)$ is a measurable mapping that represents the size $j$ jump intensity of particle $v.$ 

    \item $(\mathbf{N}^j_v)_{v\in \V, j\in\Jmc}$ are independent Poisson point processes on $\R_+\times\R_+$ with unit intensity that are also independent of the graph $\tree$ and the initial conditions $(X_v(0))_{v\in \V}$.
\end{itemize}
We assume that $\r^j(t,\extra, y)=0$ for all  $t\in[0,\infty)$ and $y\in\cup_n\stateS^\N$, and that $\r^j(t,a,y)$ is invariant under permutations of $y$.
We set $X_v(0)=\extra$ whenever $v\in\V\setminus\{\tree\}.$
\end{definition}
Note that the SDE \eqref{eq:IPS_SDE} describes a class of processes that are locally interacting with respect to $\tree$ 
in the sense that at each time $s\geq0$, the jump rate of vertex $v$  depends on the state $X(s)$ only through the state $X_v(s)$ of $v$ at time $s$, and the (unordered) states $X_{\partial_v}(s):=\{X_w(s): vw\in E\}$ of its neighbors in $\tree$.

We now state some general sufficient conditions on the initial conditions and transition rates under which the SDE \eqref{eq:IPS_SDE} is well-posed.

Given a graph $G=(V,E)$, we define the \textit{double boundary} of a set $A\subset V$ as  \begin{equation}\label{eq:double_boundary}
\partial^2 A := \left\{v\in V\setminus A : \exists u \in  \text{ with }d_G(v,u) \leq 2 \right\},
\end{equation}
where $d_G$ denotes the graph distance between two vertices.
\begin{definition}[Semi-global Markov Random Field]
\label{def:2-MRF}
    Fix a graph $G=(V,E)$ and a state space $\Ymc$. A  $\Ymc^V$-valued random element $Y$ is said to be a \textit{semi-global Markov Random Field of order 2} (henceforth referred to as the 2-MRF) if, for every $A\subset V$ such that $|\partial^2A|<\infty$, we have
    \begin{equation}
    \label{eq:2-MRF}
        Y_A \perp Y_{V\setminus \{A\cup \partial^2 A\}} | Y_{\partial^2 A}.
    \end{equation}
   
\end{definition}
\begin{assumption}[Initial conditions]
    \label{Ass:B_1} The following two conditions are satisfied.
    \begin{enumerate}
        \item The initial conditions $X(0)$ form a 2-MRF in the sense of Definition \ref{def:2-MRF}.
        \item $(\tree, X_{\tree}(0))$ is unimodular.
    \end{enumerate}
.
\end{assumption}
We refer to Section \ref{subsec:unimod} for a precise definition of unimodularity.

% \begin{remark}
% Assumption \ref{Ass:B_1} is satisfied when $\Tmc$ is a $d$-regular tree and  $\{X_{w}(0)\}_{w\in\Tmc}$, are i.i.d..
% \end{remark}

The next assumption concerns the transition rates $\r^j$ appearing in \eqref{eq:IPS_SDE}.
\begin{assumption}[Rates]
\label{Ass:A_2}
For every $j\in\Jmc$ the rates  $\{\r^{j}\}_{j\in\Jmc}:[0,\infty)\times \stateS\times\cup_{n=0}^\infty \stateS^n \rightarrow [0,\infty)$ are c\`agl\`ad (left-continuous with finite right limits) and satisfy 
\begin{equation}
\label{eq:rates_bound}
\r^j(t, \cdot ) \leq C(d_v+1,t),
\end{equation}
where 
\begin{enumerate}
    \item $d_v=\left|\set{u\in V : uv\in E}\right|$ is the degree of vertex $v$ in $\tree$.
    \item $C : \N \times \R_+ \rightarrow \R_+$ is a function that is non decreasing in each of its
arguments and satisfies $\lim_{t\rightarrow\infty} C(d,t)<\infty$ for all $d\in\N$.
\end{enumerate}
\end{assumption}

The c\`agl\`ad assumption on the rates in Assumption \ref{Ass:A_2} ensures that the trajectories of $X$ in \eqref{eq:IPS_SDE} are c\`adl\`ag.

The well-posedness of \eqref{eq:IPS_SDE} follows from the following result from \cite{GanRam2024}:

\begin{proposition}[\cite{GanRam2024}*{Theorem 4.3}]
\label{prop:SDE-wellposed}
Under Assumptions \ref{Ass:A_1}, \ref{Ass:B_1} and \ref{Ass:A_2}, the SDE \eqref{eq:IPS_SDE} is strongly well-posed, in the sense that there exists at least one weak solution, and the SDE is pathwise unique.
\end{proposition}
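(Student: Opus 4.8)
The plan is to prove well-posedness by a graphical (Poissonian) construction together with a finite-speed-of-propagation estimate; for this, the only structural facts I need are that the tree has uniformly bounded degree and that the rates are uniformly bounded in time, so I would begin by recording these two reductions. By Assumption \ref{Ass:A_1}, $|\Supp(\theta)|<\infty$, and since $\hat\theta(k)=(k+1)\theta(k+1)/\sum_n n\theta(n)$ also has finite support, every realization $\tree$ has all degrees bounded by a deterministic constant $\dmax<\infty$. By Assumption \ref{Ass:A_2}, $C$ is nondecreasing in $t$ with $\lim_{t\to\infty}C(d,t)<\infty$, so \eqref{eq:rates_bound} yields $\r^j(t,\cdot)\le C(\dmax+1,\infty)=:\bar{C}<\infty$ uniformly in $t$ and in all state arguments; hence the total jump rate at any vertex is bounded by $\Lambda:=|\Jmc|\,\bar{C}$. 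Note that the hypotheses of Assumption \ref{Ass:B_1} (the $2$-MRF and unimodularity of the initial data) play no role here: they are needed for the later hydrodynamic and conditional-independence statements, not for well-posedness.

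Next I would set up the graphical construction. For each $v\in\V$ and $j\in\Jmc$, call the points $(s,u)$ of $\mathbf{N}^j_v$ with $u\le\bar{C}$ the \emph{active marks}; because $\one_{\{u<\r^j\}}=0$ whenever $u>\bar{C}$, only active marks can ever trigger a jump in \eqref{eq:IPS_SDE}, and over all $v,j$ they form a locally finite marked point process with per-vertex rate at most $\Lambda$. On any \emph{finite} subgraph the active marks in a window $[0,T]$ are almost surely finite in number, so I can construct the unique solution by resolving the marks in increasing time order: the configuration is constant between consecutive marks, and at an active mark $(s,u,j)$ of $v$ the indicator $\one_{\{u<\r^j(s,X_v(s-),X_{\partial_v}(s-))\}}$ is well defined from the left limits, so (using that the rates are càglàd) the resulting path is càdlàg. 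This finite-dimensional step is routine.

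The crux is then a finite-domain-of-dependence estimate. I would argue that $X_v(t)$ can depend on an active mark of a vertex $w$ at graph distance $k$ only through a \emph{backward dependency chain}: a self-avoiding path $v=w_0\sim w_1\sim\cdots\sim w_k=w$ in $\tree$ carrying active marks at decreasing times $t>s_1>s_2>\cdots>s_k>0$. A first-moment computation bounds the expected number of such chains of length $k$ issuing from $(v,t)$ by $\dmax^k(\Lambda t)^k/k!$, the factorial arising from the ordering of the times; summing over $k$ gives $\exp(\dmax\,\Lambda t)<\infty$. Hence almost surely there are only finitely many backward chains, so the active marks on which $X_v(t)$ depends all lie in a finite (random) subtree $\tree_{v,t}$. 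I then define $X_v(t)$ by applying the finite-graph construction on $\tree_{v,t}$; consistency across different $(v,t)$ and across nested domains is automatic because the construction only reads marks and initial data, yielding a global càdlàg process solving \eqref{eq:IPS_SDE}, which is moreover a measurable functional of the driving noise and initial condition (a strong solution). For pathwise uniqueness, any solution restricted to $\tree_{v,t}$ satisfies the corresponding finite SDE there and so must coincide with the construction; since this holds for every $(v,t)$, any two solutions agree almost surely.

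I expect the finite-speed-of-propagation step to be the main obstacle: formalizing the notion of a backward dependency chain, verifying the first-moment bound, and checking the measurability and consistency of the resulting global construction is the technical heart, whereas the uniform bounds and the finite-graph solvability are standard. Everything else reduces to finite-dimensional jump-process theory, so this is the part where the infinite-graph geometry genuinely enters.
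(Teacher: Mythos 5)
The paper does not actually prove this proposition: it is imported verbatim from \cite{GanRam2024}*{Theorem 4.3}, whose argument is exactly the graphical construction plus spatial-localization (finite backward dependency cluster) estimate you sketch — the same machinery this paper later invokes via \cite{GanRam2024}*{Propositions 5.14 and 5.17} in the proof of Lemma \ref{lem:simplicity_PP}. Your outline is correct, including the observation that Assumption \ref{Ass:B_1} is not needed for well-posedness; the one imprecision is that backward dependency chains need not be self-avoiding walks, but your first-moment bound $\dmax^k(\Lambda t)^k/k!$ already counts all length-$k$ walks, so the conclusion is unaffected.
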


\subsection{A class of IPS}
Our main result applies to a class of IPS whose transition rates exhibit a certain monotonic structure. To describe this structure, we introduce the notion of a transition graph.

\begin{definition}[Transition Graph]\label{def:trans_graph}
    Given jump rates $\{\r^j\}_{j\in\Jmc}$ as in \eqref{eq:IPS_SDE}, the corresponding (directed) transition graph $G_{\r}=(\stateS, \Emc_{\rho})$ has vertex set $\stateS$ and directed-edge set
    \begin{equation*}
        \Emc_{\r} := \bigcup_{a\in\stateS, j\in\Jmc}\left\{(a,a+j) :  \exists t\in\R_+, x\in \cup_{n=0}^{\dmax}\stateS_\extra^{n} \text{ with } \r^j(t,a,x)>0.\right \}
    \end{equation*}
\end{definition}
We now state our main assumption on the IPS:
\begin{assumption}[Monotonic particle transitions]
\label{Ass:A_3}
    The transition rates $\r=\{\r^j\}_{j\in\Jmc}$ are such that the corresponding transition graph $G_{\r}$ specified in Definition \ref{def:trans_graph} is a finite directed acyclic graph.
\end{assumption}

% \june{from Kavita: "we need to discuss this. We need to deemphasize SEIR/compartemntal models since they are covered by previous work. But let's finalize this after the intro"}
\begin{remark}
\label{rem:alt_def:ass_a3}
An alternative formulation of Assumption \ref{Ass:A_3} is that for all $a,\ b\in\stateS$, if there exists $s,t\in\R_+$ such that 
\begin{equation}
\label{eq:monotonicity_equiv}
\P(X^G_v(t+s)=b |X^G_v(t)=a)>0,
\end{equation}
then \begin{equation*}
    \P(X^G_v(t'+s')=a |X^G_v(t')=b)=0,\ \ \forall t',s'\in\R_+.
\end{equation*}
This can be expressed in terms of the rates $\{\rho^j\}_{j\in\Jmc}$ by requiring that whenever $a,\ b$ are such that there exist $j_i\in\Jmc$, $a_i\in\stateS$, $t_i\in\R_+$, $x_i\in \stateS_{\extra}^{\dmax}$, $i=1,...,m$   so that $a_1=a$, $a_{i+1}=a_i+j_i$, $a_m=b$ and $\rho^{j_i}(t_i,a_i,x_i)>0$, then $\rho^{b-a}(s,a,y)=0$ for all $s\in\R_+$ and $y\in\stateS_{\extra}^{\dmax}$. 
\end{remark}

Assumption \ref{Ass:A_3} is satisfied by many interesting dynamics arising in various applications. This includes compartmental epidemiological models such as SIR models (wherein vertices represent individuals in a population, and can be either susceptible to, infected by, or recovered from an infectious disease) and some variations thereof whose transition graphs are depicted in Figure 1a. Another class of examples are entrenched majority voter models, where undecided vertices can change opinions according to the majority opinion of their neighborhood (see Figure 1b. for the associated transition graph). Assumption \ref{Ass:A_3} is also satisfied by more complicated models with nonlinear transitions, such as a model of seizure-propagation dynamics given in Section \ref{subsubsec:neural_SIR}. Assumption \ref{Ass:A_3} also requires the state space to be finite, which precludes some $\N$-valued dynamics such as multivariate Markov Hawkes processes. A workaround is to impose a deterministic threshold, see Section \ref{subsubsec:Hawkes}. %Another limitation is that we do not allow returning to a previously visited state, which is for example the case in SIS dynamics. A workaround is to allow for a finite number $l$ of infections and to consider a state space $(S_1,I_1,S_2,I_2\ldots, S_l)$.

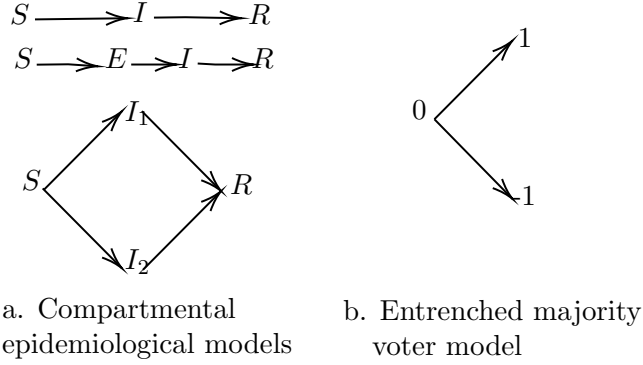
\begin{figure}
\centering
\tikzset{every picture/.style={line width=0.75pt}} %set default line width to 0.75pt        

\begin{tikzpicture}[x=0.75pt,y=0.75pt,yscale=-1,xscale=1]
%uncomment if require: \path (0,219); %set diagram left start at 0, and has height of 219

%Straight Lines [id:da4599506771295465] 
\draw    (391.57,77.09) -- (429.6,115.92) ;
\draw [shift={(431,117.35)}, rotate = 225.6] [color={rgb, 255:red, 0; green, 0; blue, 0 }  ][line width=0.75]    (10.93,-3.29) .. controls (6.95,-1.4) and (3.31,-0.3) .. (0,0) .. controls (3.31,0.3) and (6.95,1.4) .. (10.93,3.29)   ;
%Straight Lines [id:da17121772159907944] 
\draw    (391.57,77.09) -- (429.59,38.77) ;
\draw [shift={(431,37.35)}, rotate = 134.77] [color={rgb, 255:red, 0; green, 0; blue, 0 }  ][line width=0.75]    (10.93,-3.29) .. controls (6.95,-1.4) and (3.31,-0.3) .. (0,0) .. controls (3.31,0.3) and (6.95,1.4) .. (10.93,3.29)   ;
%Straight Lines [id:da7043835547178752] 
\draw    (190.12,26.69) -- (206,26.49) -- (234.88,26.13) ;
\draw [shift={(236.88,26.1)}, rotate = 179.28] [color={rgb, 255:red, 0; green, 0; blue, 0 }  ][line width=0.75]    (10.93,-3.29) .. controls (6.95,-1.4) and (3.31,-0.3) .. (0,0) .. controls (3.31,0.3) and (6.95,1.4) .. (10.93,3.29)   ;
%Straight Lines [id:da13214546263796234] 
\draw    (250.12,25.79) -- (266,25.59) -- (292.24,26.07) ;
\draw [shift={(294.24,26.1)}, rotate = 181.04] [color={rgb, 255:red, 0; green, 0; blue, 0 }  ][line width=0.75]    (10.93,-3.29) .. controls (6.95,-1.4) and (3.31,-0.3) .. (0,0) .. controls (3.31,0.3) and (6.95,1.4) .. (10.93,3.29)   ;
%Straight Lines [id:da6017865953507311] 
\draw    (194,112) -- (232.59,150.93) ;
\draw [shift={(234,152.35)}, rotate = 225.25] [color={rgb, 255:red, 0; green, 0; blue, 0 }  ][line width=0.75]    (10.93,-3.29) .. controls (6.95,-1.4) and (3.31,-0.3) .. (0,0) .. controls (3.31,0.3) and (6.95,1.4) .. (10.93,3.29)   ;
%Straight Lines [id:da807173552667245] 
\draw    (244,73) -- (282.59,111.93) ;
\draw [shift={(284,113.35)}, rotate = 225.25] [color={rgb, 255:red, 0; green, 0; blue, 0 }  ][line width=0.75]    (10.93,-3.29) .. controls (6.95,-1.4) and (3.31,-0.3) .. (0,0) .. controls (3.31,0.3) and (6.95,1.4) .. (10.93,3.29)   ;
%Straight Lines [id:da08361223978090859] 
\draw    (194.57,112.74) -- (232.59,74.42) ;
\draw [shift={(234,73)}, rotate = 134.77] [color={rgb, 255:red, 0; green, 0; blue, 0 }  ][line width=0.75]    (10.93,-3.29) .. controls (6.95,-1.4) and (3.31,-0.3) .. (0,0) .. controls (3.31,0.3) and (6.95,1.4) .. (10.93,3.29)   ;
%Straight Lines [id:da7281511319011763] 
\draw    (244.57,153.09) -- (282.59,114.77) ;
\draw [shift={(284,113.35)}, rotate = 134.77] [color={rgb, 255:red, 0; green, 0; blue, 0 }  ][line width=0.75]    (10.93,-3.29) .. controls (6.95,-1.4) and (3.31,-0.3) .. (0,0) .. controls (3.31,0.3) and (6.95,1.4) .. (10.93,3.29)   ;
%Straight Lines [id:da5684344687235326] 
\draw    (238.65,49.47) -- (259.59,49.47) ;
\draw [shift={(261.59,49.47)}, rotate = 180] [color={rgb, 255:red, 0; green, 0; blue, 0 }  ][line width=0.75]    (10.93,-3.29) .. controls (6.95,-1.4) and (3.31,-0.3) .. (0,0) .. controls (3.31,0.3) and (6.95,1.4) .. (10.93,3.29)   ;
%Straight Lines [id:da4003939854961486] 
\draw    (272.18,48.96) -- (281,49.47) -- (298.41,49.47) ;
\draw [shift={(300.41,49.47)}, rotate = 180] [color={rgb, 255:red, 0; green, 0; blue, 0 }  ][line width=0.75]    (10.93,-3.29) .. controls (6.95,-1.4) and (3.31,-0.3) .. (0,0) .. controls (3.31,0.3) and (6.95,1.4) .. (10.93,3.29)   ;
%Straight Lines [id:da32206409983935624] 
\draw    (191,49.54) -- (206.89,49.35) -- (219.89,50.24) ;
\draw [shift={(221.88,50.37)}, rotate = 183.92] [color={rgb, 255:red, 0; green, 0; blue, 0 }  ][line width=0.75]    (10.93,-3.29) .. controls (6.95,-1.4) and (3.31,-0.3) .. (0,0) .. controls (3.31,0.3) and (6.95,1.4) .. (10.93,3.29)   ;

% Text Node
\draw (378.78,65.98) node [anchor=north west][inner sep=0.75pt]   [align=left] {0};
% Text Node
\draw (432.21,29.75) node [anchor=north west][inner sep=0.75pt]   [align=left] {1};
% Text Node
\draw (429.21,109.6) node [anchor=north west][inner sep=0.75pt]   [align=left] {\mbox{-}1};
% Text Node
\draw (344,167) node [anchor=north west][inner sep=0.75pt]   [align=left] {b. Entrenched majority \\ \ \ \ voter model};
% Text Node
\draw (234,66.4) node [anchor=north west][inner sep=0.75pt]    {$I_{1}$};
% Text Node
\draw (234,141.4) node [anchor=north west][inner sep=0.75pt]    {$I_{2}$};
% Text Node
\draw (287,103.4) node [anchor=north west][inner sep=0.75pt]    {$R$};
% Text Node
\draw (296.12,17.29) node [anchor=north west][inner sep=0.75pt]    {$R$};
% Text Node
\draw (237.94,16.39) node [anchor=north west][inner sep=0.75pt]    {$I$};
% Text Node
\draw (182,101.4) node [anchor=north west][inner sep=0.75pt]    {$S$};
% Text Node
\draw (176.18,17.29) node [anchor=north west][inner sep=0.75pt]    {$S$};
% Text Node
\draw (172,166) node [anchor=north west][inner sep=0.75pt]   [align=left] {a. Compartmental \\epidemiological models};
% Text Node
\draw (297.88,39.76) node [anchor=north west][inner sep=0.75pt]    {$R$};
% Text Node
\draw (260.88,39.76) node [anchor=north west][inner sep=0.75pt]    {$I$};
% Text Node
\draw (177.94,39.76) node [anchor=north west][inner sep=0.75pt]    {$S$};
% Text Node
\draw (223.82,39.76) node [anchor=north west][inner sep=0.75pt]    {$E$};

\end{tikzpicture}

\caption{State space representation for some example dynamics}
\end{figure}

\subsection{Main result: tractable description of space-time marginal dynamics}
\label{subsec:main_result_statement}
For the class of IPS satisfying Assumptions \ref{Ass:A_1}, \ref{Ass:B_1}, \ref{Ass:A_2} and \ref{Ass:A_3}, our main contribution is to provide an ODE description of the evolution of the law of the root particle and its neighbors. We start by establishing well-posedness of this system of ODEs.

We define $\V_1:=\{\root\}\cup \N \subset \V$. Given $\offspring\in\Pmc(\natzero)$, define $\supoff:=\{k: \offspring(k)>0\}$, and suppose that $|\supoff|<\infty$. We define 
\begin{equation}\label{eq:d_max}
    \dmax:=\dmax(\offspring):=\max\{k: \offspring(k)>0\}
\end{equation}
and
\begin{equation}
    \V_1^\offspring := \{\root\}\cup \{k \in \N, 1\leq k<\dmax(\offspring)\}.
\end{equation}
Note that the configuration space of the neighborhood of the root is then 
\begin{equation*}
C^{\offspring}:=\cup_{n\in\supoff}(\stateS^{n+1}\times\{\extra\}^{\dmax-n})\subset \stateS_\extra^{\V_1^\offspring}.
\end{equation*}
For $\vec{a}\in \stateS_\extra^{\V_1^\offspring}$, we define 
\begin{equation}\label{eq:degree_of_vec}
    k(\vec{a}):= \one_{\{\vec{a}\in C^\offspring\}}(\max\{v : a_v\neq \extra\}-1),
\end{equation}
that is, $k(\vec{a})$ is the degree of $\root$ when $X_{\V_1^\offspring}=\vec{a}$.
% $k(\vec{a}):=0$ whenever $\vec{a}\in \stateS_\extra^{\V_1^\offspring}\setminus C^{\offspring}$, and $k(\vec{a})=\max\{v : a_v\neq \extra\}-1$ otherwise.
We let 
\begin{equation*}
    \Pmc^{\offspring}:= \set{ p \in \Pmc(\stateS_\extra^{\V_1^{\offspring}}) \ :\ p(C^\offspring)=1}.
\end{equation*}

% \june{change this to a simplex with also zero weight to things that are in $\stateS^{}\V_1^\offspring \setminus  \cup_{n\in\supoff}(\stateS^{n+1}\times\{\extra\}^{\dmax-n})$.....}
% \begin{equation}
%     S^\offspring:=\left\{ p \in [0,1]^{\stateS^{|\Theta|+1}} 
%  : \sum_{i=1}^{|\stateS|^{|\Theta|+1}} p_i=1\right\}
% \end{equation}

In the following, having identified $\stateS$ with a subset of $\N$, we let $(\ev)_{v=0,1,...,\dmax}$, be standard basis vectors.
To simplify exposition in the following results, we define $\rr^j:[0,\infty)\times (\Z\cup\{\extra\})^{\dmax+1} \rightarrow [0,\infty)$  by 
\begin{equation}\label{def:deparametrize_r}
    \rr^j(t,y):=\begin{cases}
        \r^j\left(t,y_0,\{y_{v}\}_{v=1}^{\dmax}\right) & \text{if } y\in C^{\offspring}  
        \\ 0 & \text{otherwise,}
    \end{cases}
\end{equation}
that is, $\rr^j(t,y)$ is a reparametrization of the jump rates $\r^j$ in Definition \ref{def:IPS}  when $y$ is in the allowed configuration $C^\offspring$, and $\rr(t,y)=0$ otherwise.
We also adopt the conventions that $\extra+m=\extra$ for all $m\in\Z$, and that $0/0=0$.
As is standard practice, we use the dot notation for derivatives with respect to time.

\begin{proposition}\label{prop:ODE-wellposed}
    Suppose that Assumptions \ref{Ass:A_1},\ref{Ass:B_1},\ref{Ass:A_2} and \ref{Ass:A_3} hold, with $\offspring$ the degree distribution from Assumption \ref{Ass:A_1}. Fix $p\in \Pmc^\offspring$.  Then there exists a unique solution $\lawMF=\lawMF^\offspring$ to the following system of ODEs:

% \june{think about this notation. right now $\vec{a}-je_v$ can be negative, though we can take care of in $\Psi$ if we adopt a good convention on the rates.
% Basically, whatever we decide to do here, need to make consistent with the Definition of the IPS. My suggestion is making $\r$ a function on $[0,\infty)\times\stateS_\extra^{1+\dmax}$, and set it to $0$ whenever the configuration is not in $C^\offspring$.}
% \june{check the equation...}
    
\begin{align}
     \label{eq:MF-ODE-UGW}
   & \dot \lawMF_{t}(\vec{a}) = \sum_{j\in\Jmc}\sum_{v=0}^{k(\vec{a})}\left(  \lawMF_{t}(\vec{a}-j\ev) \Psi_v^{j,j}(t,\lawMF_t, \vec{a})- \lawMF_{t}(\vec{a}) \Psi_v^{0,j}(t,\lawMF_t, \vec{a}) \right),
\\ \label{eq:MF-ODE-UGW_initial}
&\lawMF_{0}(\vec{a})= p(\vec{a}),
\end{align}
for all  $\vec{a}\in \stateS_\extra^{\V_1^\offspring}$, where $\Psi_v^{\ell,j}: [0,\infty)\times \Pmc^{\offspring}\times \stateS_\extra^{\V_1^{\offspring}}\rightarrow [0,\infty)$ is given by
% \begin{equation*}
%     \Psi_v^{\ell,j}(t,f,\vec{a}):= 
%     \begin{cases}
% \rr^j(t,\vec{a}-\ell e_{\root}) & \text{if } v=\root,  \\ 
% \frac{\sum_{k\in\supoff}\sum_{\vec{b}\in\stateS^{k+1}} k \rr^j(t,\vec{b}) f(\vec{b}) \one_{\{ b_\root=a_v-\ell,b_1=a_\root\}}}{\sum_{k\in\supoff}\sum_{\vec{c}\in\stateS^{k+1}} k f(\vec{c}) \one_{\{ c_\root=a_v-\ell,c_1=a_\root\}}} & \text{otherwise,}
% \end{cases}
% \end{equation*}
\begin{equation*}
    \Psi_v^{\ell,j}(t,f,\vec{a}):= 
    \begin{cases}
\rr^j(t,\vec{a}-\ell e_{\root}) & \text{if } v=\root,  \\ 
\frac{\sum_{\vec{b}\in C^\offspring} k(\vec{b})\rr^j(t,\vec{b}) f(\vec{b}) \one_{\{ b_\root=a_v-\ell,b_1=a_\root\}}}{\sum_{\vec{c}\in C^\offspring} k(\vec{c}) f(\vec{c}) \one_{\{ c_\root=a_v-\ell,c_1=a_\root\}}} & \text{otherwise,}
\end{cases}
\end{equation*}
    with $\rr^j$ as defined in \eqref{def:deparametrize_r}.
\end{proposition}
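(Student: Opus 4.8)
The plan is to view \eqref{eq:MF-ODE-UGW}--\eqref{eq:MF-ODE-UGW_initial} as a finite system of ODEs on the compact convex set $\Pmc^\offspring$ and to establish existence, uniqueness, and global solvability through a Carathéodory/Cauchy--Lipschitz argument. Write the right-hand side of \eqref{eq:MF-ODE-UGW} as $F(t,\lawMF_t)$, with $F\colon[0,\infty)\times\Pmc^\offspring\to\R^{\stateS_\extra^{\V_1^\offspring}}$. Since $|\Supp(\offspring)|<\infty$ forces $\dmax<\infty$ and $\stateS$ is finite, the ambient space is finite-dimensional, so the only points to verify are (i) measurability of $t\mapsto F(t,f)$, (ii) a Lipschitz bound for $f\mapsto F(t,f)$, and (iii) forward invariance of $\Pmc^\offspring$. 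Item (i) is immediate: by Assumption \ref{Ass:A_2} each $\rr^j(\cdot,y)$ is càglàd and bounded by $C(\dmax+1,\cdot)<\infty$, so $t\mapsto F(t,f)$ is Borel and locally bounded uniformly in $f$.

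For (iii) I would record two structural facts. Nonnegativity is preserved by a Nagumo-type argument: on the face $\{f(\vec a)=0\}$ the loss term $\lawMF(\vec a)\Psi_v^{0,j}$ vanishes, so $\dot\lawMF(\vec a)=\sum_{j,v}\lawMF(\vec a-j\ev)\Psi_v^{j,j}\ge 0$. Conservation of total mass follows from the pointwise identity $\Psi_v^{j,j}(t,f,\vec a)=\Psi_v^{0,j}(t,f,\vec a-j\ev)$, valid for every $v$ (for $v=\root$ by direct substitution in \eqref{def:deparametrize_r}, and for $v\ne\root$ because both sides carry the indicator $\one_{\{b_\root=a_v-j,\,b_1=a_\root\}}$ with identical numerator and denominator). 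Hence each gain term for $\vec a$ coincides with a loss term for the shifted configuration $\vec a-j\ev$, and summing \eqref{eq:MF-ODE-UGW} over $\vec a\in C^\offspring$ telescopes to $\sum_{\vec a}\dot\lawMF_t(\vec a)=0$, so $\lawMF_t(C^\offspring)\equiv 1$. Thus $\Pmc^\offspring$ is forward invariant, $F$ is bounded there, and no finite-time blow-up can occur, so any local solution extends to all of $[0,\infty)$.

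The crux is (ii), and the obstacle is entirely the quotient appearing in $\Psi_v^{\ell,j}$ for $v\ne\root$. Each such contribution has the form $f(\vec b_\pm)\,N_t(f)/D_t(f)$ with $N_t(f)=\sum_{\vec c}k(\vec c)\rr^j(t,\vec c)f(\vec c)\one_{\{\cdots\}}$ and $D_t(f)=\sum_{\vec c}k(\vec c)f(\vec c)\one_{\{\cdots\}}$ both linear in $f$, satisfying $0\le N_t\le C(\dmax+1,t)\,D_t$. The ratio is bounded by $C(\dmax+1,t)$ but is only locally Lipschitz on $\{D_t>0\}$ and may fail to be continuous where $D_t$ vanishes; note that the prefactor configuration $\vec b_\pm$ is generally \emph{not} among the configurations summed in $D_t$, so there is no automatic cancellation. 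I expect to resolve this by showing that on the invariant set the prefactor is dominated by the denominator, $f(\vec b_\pm)\le\kappa\,D_t(f)$, after which $f(\vec b_\pm)N_t/D_t$ is globally Lipschitz and the convention $0/0=0$ supplies the value $0$ on $\{D_t=0\}$. The mechanism behind such a bound is that, from the viewpoint of neighbor $v$, the event $\{c_\root=a_v-\ell,\,c_1=a_\root\}$ defining $D_t$ is exactly the size-biased description of that vertex's own environment, so a consistency (edge-reversibility) property of the neighborhood marginal together with the monotone transition structure of Assumption \ref{Ass:A_3} keeps $D_t>0$ precisely where $f(\vec b_\pm)>0$. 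Verifying that this domination is propagated by the flow, or else handling the degenerate regime directly by solving on the interior $\{D_t>0\}$ and extending continuously across the vanishing locus, is the step I anticipate to be the most delicate.

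Once (ii) is in hand, existence and uniqueness of an absolutely continuous solution on each $[0,T]$ follow from the Carathéodory existence theorem together with a Grönwall estimate using the Lipschitz constant $L_T$, which is finite because $C(\dmax+1,T)<\infty$. Forward invariance of $\Pmc^\offspring$ from (iii) then upgrades this to the unique global solution $\lawMF=\lawMF^\offspring$ on $[0,\infty)$ asserted in the proposition.
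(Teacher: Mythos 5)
Your overall strategy coincides with the paper's: reduce everything to a global Lipschitz bound for the right-hand side of \eqref{eq:MF-ODE-UGW} on $\Pmc^\offspring$ and invoke Picard. Items (i) and (iii) of your plan are fine --- the gain/loss pairing $\Psi_v^{j,j}(t,f,\vec a)=\Psi_v^{0,j}(t,f,\vec a-j\ev)$ is correct and your invariance discussion is actually more explicit than the paper's --- and you have correctly located the crux in the quotient defining $\Psi_v^{\ell,j}$ for $v\neq\root$. The gap is that you stop exactly there: the domination of the prefactor by the denominator, $f(\vec b_\pm)\le\kappa\,D_t(f)$, is announced as ``the most delicate step'' with two candidate routes, but neither is carried out, and the first route (propagating the domination along the flow) is not viable as stated, since you would be using properties of the solution to establish the Lipschitz bound needed to construct that solution in the first place.

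The paper closes this with a purely static, pointwise inequality on $\Pmc^\offspring$; no dynamics are involved. Writing the generic term as $q(\vec a)\,N(q)/D(q)$ with $N(q)=\sum_{\vec b}k(\vec b)\rr^j(t,\vec b)q(\vec b)\one_{\{\cdots\}}$ and $D(q)=\sum_{\vec c}k(\vec c)q(\vec c)\one_{\{\cdots\}}$, it sets $\delta(q)$ equal to the $q$-mass of the two-coordinate event in the indicator and records the elementary chain $q(\vec a)\le\delta(q)$, $\delta(q)\le D(q)\le\dmax\,\delta(q)$ and $N(q)\le\dmax R\,\delta(q)$, where $R$ is the uniform rate bound from Assumption \ref{Ass:A_2} and $1\le k(\vec c)\le\dmax$ on the configurations contributing to $D$. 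The quotient and product rules then give $\bigl|\partial_{q(\vec b)}\bigl(q(\vec a)N(q)/D(q)\bigr)\bigr|\le 2\dmax^2R$ uniformly on $\Pmc^\offspring$, i.e.\ a global Lipschitz constant; the convention $0/0=0$ is harmless because the prefactor $q(\vec a)$ vanishes wherever $D(q)$ does. Your instinct that an exchangeability/edge-reversibility property is what matches the prefactor's event $\{x_\root=a_\root,\ x_v=a_v-\ell\}$ with the denominator's event $\{b_\root=a_v-\ell,\ b_1=a_\root\}$ is accurate --- this identification is the one point the paper itself passes over rather quickly --- but once it is granted, the domination is a one-line consequence of the nonnegativity of the summands in $\delta(q)$, not a property that must be verified along trajectories. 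Supplying this chain of inequalities is what your proposal is missing.
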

 Proposition \ref{prop:ODE-wellposed} is proved in Section \ref{sec:ode_well-posed} by showing that the right-hand side of \eqref{eq:MF-ODE-UGW} is Lipshitz in $\lawMF_t\in \Pmc^\offspring$. When $\offspring=\delta_{\kappa}$ for $\kappa\in\N$, $\kappa\geq 2$,  the graph $\tree$ is the $\kappa$-regular tree, and we obtain the following corollary.
\begin{corollary}
   Suppose that Assumptions \ref{Ass:A_1},\ref{Ass:B_1},\ref{Ass:A_2} and \ref{Ass:A_3} hold with $\offspring=\delta_\kappa$ for $\kappa\in \N, \kappa\geq 2$. Fix $p\in \Pmc^{\delta_k}$. Then there exists a unique solution $\lawMF=\lawMF^\offspring$ to the following system of ODEs:
    \begin{align}
 \label{eq:MF-ODE}
  &  \dot \lawMF_t(\vec{a}) = \sum_{j\in\Jmc}\sum_{v=0}^{\kappa}\left(  \lawMF_t(\vec{a})-je_v) \Psi_v^{j,j}(t,\lawMF_t, \vec{a})- \lawMF_t(\vec{a}) \Psi_v^{0,j}(t,\lawMF_t, \vec{a}) \right),
        \\ \label{eq:MF-ODE-initial}
&\lawMF_{0}(\vec{a})= p(\vec{a})
    \end{align}
    for all $\vec{a}\in\stateS^{\kappa+1}$
where  $\Psi_v^{\ell,j}: [0,\infty)\times \Pmc^{\delta_{\kappa}}\times \stateS^{\kappa+1}\rightarrow [0,\infty)$ is given by
\begin{equation*}
    \Psi_v^{\ell,j}(t,f,\vec{a}):= 
    \begin{cases}
\rr^j(t,\vec{a}-\ell \vec{e^{v}}) & \text{if } v=\root,  \\ 
\frac{\sum_{\vec{b}\in\stateS^{\kappa+1}} \rr^j(t,\vec{b}) f(\vec{b}) \one_{\{ b_\root=a_v-\ell,b_1=a_\root\}}}{\sum_{\vec{c}\in\stateS^{\kappa+1}} f(\vec{c}) \one_{\{ c_\root=a_v-\ell,c_1=a_\root\}}} & \text{otherwise,}
    \end{cases}
\end{equation*}
with $\rr^j$ as defined in \eqref{def:deparametrize_r}.
\end{corollary}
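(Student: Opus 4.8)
The plan is to obtain the corollary as a direct specialization of Proposition \ref{prop:ODE-wellposed} to the deterministic degree distribution $\offspring=\delta_\kappa$, so that essentially all of the work reduces to tracking how the general objects $\dmax$, $C^\offspring$, $k(\cdot)$ and $\Psi_v^{\ell,j}$ collapse when the root degree is almost surely equal to $\kappa$.

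First I would record that $\offspring=\delta_\kappa$ satisfies Assumption \ref{Ass:A_1}: its support is the singleton $\{\kappa\}$, so $|\Supp(\offspring)|=1<\infty$, and the size-biased law from \eqref{eq:UGW_offspring_dist} is
\[
\hat{\offspring}(k)=\frac{(k+1)\delta_\kappa(k+1)}{\sum_{n\geq 1}n\,\delta_\kappa(n)}=\frac{\kappa\,\one_{\{k=\kappa-1\}}}{\kappa}=\delta_{\kappa-1}(k).
\]
Thus the root has exactly $\kappa$ offspring while every subsequent vertex has exactly $\kappa-1$ offspring, so each non-root vertex has degree $(\kappa-1)+1=\kappa$; together with the root this confirms that $\tree$ is almost surely the infinite $\kappa$-regular tree, as asserted in the statement.

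Next I would specialize the data of Proposition \ref{prop:ODE-wellposed}. From \eqref{eq:d_max} we get $\dmax(\delta_\kappa)=\kappa$ and $\supoff=\{\kappa\}$, so the configuration space degenerates to
\[
C^{\delta_\kappa}=\stateS^{\kappa+1}\times\{\extra\}^{\dmax-\kappa}=\stateS^{\kappa+1},
\]
i.e.\ no $\extra$-padding survives because the root degree is deterministic. Consequently, by \eqref{eq:degree_of_vec}, $k(\vec a)=\kappa$ for every $\vec a\in C^{\delta_\kappa}=\stateS^{\kappa+1}$, and $\Pmc^{\delta_\kappa}=\set{p\in\Pmc(\stateS^{\kappa+1}):p(\stateS^{\kappa+1})=1}=\Pmc(\stateS^{\kappa+1})$, so the hypothesis $p\in\Pmc^{\delta_\kappa}$ is exactly the one appearing in the corollary.

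Finally I would substitute these identities into \eqref{eq:MF-ODE-UGW} and into the definition of $\Psi_v^{\ell,j}$. Since $k(\vec a)\equiv\kappa$, the inner sum $\sum_{v=0}^{k(\vec a)}$ becomes $\sum_{v=0}^{\kappa}$, turning \eqref{eq:MF-ODE-UGW} into \eqref{eq:MF-ODE}. In the off-root branch of $\Psi_v^{\ell,j}$ both the numerator and the denominator carry the degree weight $k(\vec b)=k(\vec c)=\kappa$, which is now a constant factor and cancels, leaving precisely the ratio appearing in the corollary; the root branch $\rr^j(t,\vec a-\ell e_\root)$ is unchanged. Hence the ODE system of Proposition \ref{prop:ODE-wellposed} coincides verbatim with \eqref{eq:MF-ODE}--\eqref{eq:MF-ODE-initial}, and existence and uniqueness of $\lawMF=\lawMF^\offspring$ transfer immediately. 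There is essentially no analytic obstacle here: the only point requiring care is confirming that the deterministic degree forces $C^{\delta_\kappa}=\stateS^{\kappa+1}$ with $k\equiv\kappa$, which is exactly what makes the degree-weighting factor in $\Psi_v^{\ell,j}$ drop out.
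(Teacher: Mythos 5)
Your proposal is correct and follows exactly the route the paper intends: the corollary is stated as an immediate specialization of Proposition \ref{prop:ODE-wellposed} to $\offspring=\delta_\kappa$, and your computation of $\hat\offspring=\delta_{\kappa-1}$, the collapse $C^{\delta_\kappa}=\stateS^{\kappa+1}$ with $k(\cdot)\equiv\kappa$, and the cancellation of the constant degree weight in the numerator and denominator of $\Psi_v^{\ell,j}$ supplies precisely the bookkeeping the paper leaves implicit. No gap; the only caveat is that the conclusion rests entirely on Proposition \ref{prop:ODE-wellposed}, whose proof (the Lipschitz estimate in Section \ref{sec:ode_well-posed}) you correctly treat as already established.
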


We now state our main result.
\begin{theorem} \label{thm:main}
Suppose that Assumptions \ref{Ass:A_1},\ref{Ass:B_1},\ref{Ass:A_2} and \ref{Ass:A_3} hold.
Let $\lawMF$ be the solution of the ODE  \eqref{eq:MF-ODE-UGW}-\eqref{eq:MF-ODE-UGW_initial} with initial conditions $p=\law(X_{\V_1^\offspring}(0))$.Then, for every $t\geq 0,$
\begin{equation*}
    \lawMF_t=\law(X_{\V_1^\offspring}(t)).
\end{equation*}
\end{theorem}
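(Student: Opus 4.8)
The plan is to exhibit $t\mapsto\law(X_{\V_1^\offspring}(t))$ as the time-marginal flow of a Markov jump process whose Kolmogorov forward equation is exactly \eqref{eq:MF-ODE-UGW}, and then to invoke the uniqueness asserted in Proposition \ref{prop:ODE-wellposed}. The restriction $X_{\V_1^\offspring}(\cdot)$ of the tree dynamics to the root and its neighbors is not itself Markov, since the jump rate of a neighbor $v$ depends on the states of the children of $v$, which lie outside $\V_1^\offspring$. By the Markovian projection (mimicking) principle for pure-jump processes developed in Section \ref{sec:Markov_proj_proof}, the time-marginals of $X_{\V_1^\offspring}(\cdot)$ nevertheless satisfy the forward equation of a Markov jump process whose rates are the conditional expectations of the true rates given the \emph{current} configuration; this is precisely the Markovian projection of the local-field equation of \cites{GanRamTree,GanRamUGW}. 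For the root, $\Psi_\root^{\ell,j}(t,f,\vec a)=\rr^j(t,\vec a-\ell e_\root)$ is already a function of the current configuration, so the root contributes exactly the $v=\root$ summands of \eqref{eq:MF-ODE-UGW}. The whole task thus reduces to computing, for a neighbor $v$, the present-state conditional rate $\E[\rr^j(t,X_{\cl_v}(t^-))\mid X_{\V_1^\offspring}(t^-)=\vec a]$ and identifying it with $\Psi_v^{\ell,j}(t,\lawMF_t,\vec a)$.

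The key structural input is a \emph{time-marginal} second-order Markov random field property: for each fixed $t$, the configuration $X(t)$ is a $2$-MRF in the sense of Definition \ref{def:2-MRF}. Granting this, take $A$ to be the subtree of strict descendants of $v$; its double boundary is exactly $\partial^2 A=\{v,\root\}$, which is finite, so \eqref{eq:2-MRF} gives that, conditionally on $(X_v(t^-),X_\root(t^-))=(a_v,a_\root)$, the states of the children of $v$ are independent of the states of the remaining neighbors of the root. Consequently the conditional expectation above depends on $\vec a$ only through the pair $(a_v,a_\root)$, reducing it to the conditional expected rate of the neighborhood of a neighbor given its own state and its parent's state.

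It remains to express this quantity through $\lawMF_t=f$. Here unimodularity (the mass-transport principle of Section \ref{subsec:unimod}) identifies the law of the neighborhood of a neighbor, conditioned on the neighbor's state and its parent's state, with the degree--size-biased root-neighborhood law. This is exactly what produces the weight $k(\vec b)$ and the indicator $\one_{\{b_\root=a_v-\ell,\,b_1=a_\root\}}$ in the definition of $\Psi_v^{\ell,j}$, with $b_\root$ recording the neighbor's pre-jump state $a_v-\ell$ (with $\ell=j$ for a gain term and $\ell=0$ for a loss term) and $b_1$ recording its parent's, that is the root's, state $a_\root$. With the base case at $t=0$ supplied by the initial-condition $2$-MRF of Assumption \ref{Ass:B_1} and the initial datum $p=\law(X_{\V_1^\offspring}(0))$, this shows that $t\mapsto\law(X_{\V_1^\offspring}(t))$ solves \eqref{eq:MF-ODE-UGW}-\eqref{eq:MF-ODE-UGW_initial}; Proposition \ref{prop:ODE-wellposed} then forces $\lawMF_t=\law(X_{\V_1^\offspring}(t))$ for every $t\geq 0$.

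The main obstacle I anticipate is establishing the time-marginal $2$-MRF, which does \emph{not} follow from the trajectorial $2$-MRF of \cite{GanRamMRF}: conditioning on the boundary \emph{trajectory} carries strictly more information than conditioning on the boundary \emph{state} at the single time $t$, yet it is the latter that appears in the forward equation. This gap is exactly where Assumption \ref{Ass:A_3} enters: because the transition graph is a finite directed acyclic graph, single-particle transitions are monotone, so the current state of a boundary particle is a sufficient statistic for its history as far as the conditional law of the interior region is concerned. Concretely, I would first prove a trajectorial $2$-MRF valid up to a suitable family of stopping times adapted to the (partially ordered) transitions, and then transfer it to fixed-time marginals by showing that conditioning on the boundary trajectory up to those stopping times coincides with conditioning on the boundary state at time $t$. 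Making this stopping-time construction precise, and separately justifying the differentiation of the marginals and the interchange of conditional expectations in the forward equation, are where I expect the bulk of the technical effort to lie.
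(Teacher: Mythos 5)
Your proposal follows essentially the same route as the paper: characterize $\law(X_{\V_1^\offspring})$ via the local-field equation, apply the Markovian projection to pass to present-state conditional rates, use the time-marginal 2-MRF with $\partial^2 A=\{v,\root\}$ together with unimodularity to identify the neighbor rates with $\Psi_v^{\ell,j}$, and close the argument with the forward Kolmogorov equation and the uniqueness from Proposition \ref{prop:ODE-wellposed}. You also correctly locate the main technical burden---proving the time-marginal 2-MRF via a stopping-time trajectorial 2-MRF, using Assumption \ref{Ass:A_3} to control the boundary jumps---which is exactly how the paper proceeds in Section \ref{sec:proof_2-MRF(t)}.
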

Theorem \ref{thm:main} is proved in Section \ref{sec:proof_outline}. We now provide an outline of our proof techniques. Our starting point is an autonomous characterization of the law $\law(X^\Tmc_{\V_1^\offspring})$ of the particle at the root and its neighbors as the unique solution to a path-dependent jump SDE called the \textit{local-field equation}, which was derived in \cite{GanRamUGW} using a conditional independence property of the trajectories of $X$ and the graph structure established in \cite{GanRamMRF}. In general the law of $X^\Tmc_{\V_1^\offspring}$ need not be Markov, and its law can be hard to compute. 
However, we show that under Assumption \ref{Ass:A_3}, the time marginals of $X$ satisfies a stronger conditional independence property (see Section \ref{subsec:proof_outline_2-MRF(t)}). We combine this property with a Markovian projection result (see Section \ref{subsec:proof_outline_mimicking}) to characterize the space-time marginals $X_{\V_1^\offspring}(t)$ as the solution of a Markovian SDE called the \textit{Markov local-field equation}. The ODE \eqref{eq:MF-ODE-UGW}-\eqref{eq:MF-ODE-UGW_initial} can then characterize $X_{\V_1^\offspring}(t)$ by solving an associated Kolmogorov forward equation.

\begin{remark}
By Assumption \ref{Ass:B_1}, the law  $\lawMF_0(\vec{a})=\law(X_{\V_1^\offspring})$ is invariant under the reordering of $a_1,...,a_l$ with $l=K(\vec{a})$.
Since the rates $\r^j(t,x,\vec{b})=\rr^j(t (x,\vec{b}))$ are invariant under reordering of $\vec{b}$ (See Definition \ref{def:IPS}), then $\lawMF_t(\vec{a})$ is also invariant under the reordering of $a_1,...,a_{\kappa}$ with $\kappa=k(\vec{a})$.
In fact, in the special case of the $\kappa$-regular tree,  \cite{Gangulythesis}*{Proposition 6.10} establishes that, whenever the initial conditions $X(0)$ are invariant under tree-automorphisms, so is the process $X$. Therefore, for every permutation $\sigma$ of $\{0,...,m\}$ satisfying $\sigma(0)=0$, we have
\begin{equation*}
\lawMF_t(a_{\sigma(v)}, v=0,...,l)=\lawMF_t(\vec{a}).    
\end{equation*}

Letting $m=|\stateS|$, in the case when $\offspring=\delta_\kappa$, we can rewrite equation \eqref{eq:MF-ODE} as a system of $m\binom{\kappa+m-1}{m-1}$ equations corresponding to $m$ possible values of $a_\root$, and $\binom{\kappa+m-1}{m-1}$ unordered combinations of $a_1,...,a_\kappa$. This is a slight reduction of dimensionality compared to the original system of $m^{\kappa+1}$ equations. For general, non-degenerate $\offspring$ with support $\supoff$, by a similar argument we obtain a system of $m \sum_{k\in \supoff }\binom{k+m-1}{m-1}$ equations. 
\end{remark}

% \subsection{Tractable description of space-time marginal dynamics} 
\subsection{A limit theorem formulation of the main result}
The UGW($\offspring$) tree arises at the local limit of the so-called configuration model graphs (which we define below). Therefore, invoking  a result of Ganguly and Ramanan \cite{GanRam2024} restated below as Proposition \ref{prop:hydrodynamic}, our Theorem \ref{thm:main} describes the limit, as the size of the graph goes to infinity, of the ensemble behavior of IPS dynamics on configuration models, in a sense that we now make precise.
\begin{definition}[Configuration model]\label{def:CM}
    Fix $n\in\natzero$ and let $\vec{d}=\{d_{i}\}_{i=1}^n$ be a graphical sequence\footnote{A vector in $\natzero^n$ such that there exists $G=(V,E)$ with vertices $V=\{1,2,...n\}$ where each $v\in V$ has degree $d_v$.},
    % such that $\sum_{i=1}^n \delta_{d_{i,n}}$ converges weakly to $\offspring$ as $n\rightarrow\infty$, and $\offspring$ has finite second moment. 
    The configuration model $\CM_n(\vec{d})$ is a random graph sampled uniformly at random among the graphs on $n$ vertices with degree sequence $\{d_{i}\}_{i=1}^n$. 
\end{definition}

We can define IPS dynamics on a finite random graph with a deterministic number of vertices equivalently to \eqref{eq:IPS_SDE}. This enables us to reframe Theorem \ref{thm:main} as a limit theorem for a sequence of IPS on finite-size configuration model graphs. 
Given a finite graph $G$, we define
\begin{align}\label{def:fractions}
    \begin{split}
        & \emp^G_t(a):=\frac{1}{|G|}\sum_{v\in G} \one_{\set{X^G_v(t)=a}}, \quad t\in [0,\infty), a\in\stateS,
    \end{split}
\end{align}
that is, $\emp^G_t$ is the empirical distribution of $X^G(t)$.

% \june{Corollary: this convergence works if initial states are i.i.d, (or gibbs) or in general if there is local convergence wrt the \textbf{marked} graph. By \cite{lacker2023local}*{Proposition 2.15}, when the unmarked graph converges locally in probability, and the makrs are i.i.d (or gibbs) then the marked graph converges. I think it makes the most sense to state the hydrodynamic / limit result just for i.i.d. here, and perhaps make a remark about "the same holds if the marked graph converges locally in probability, see definition [citation]. Also: combine this and statement of Remark \ref{rmk:empirical_neighbors}?}
\begin{corollary}
\label{cor:limit_thm_CM}
For $n\in \N$ let $\vec{D}^{(n)}\in \natzero^n $ be a graphical sequence such that $\sum_{i=1}^n \delta_{D_{i}^{(n)}}$ converges weakly to $\offspring$ as $n\rightarrow\infty$, and suppose that $\offspring$ has finite support.  Let $G_n=\text{CM}_n(\vec{D}^{(n)})$ as given in Definition \ref{def:CM}. Then for every $f:\stateS_\extra^{\V_1^\offspring}\rightarrow \R$,
\begin{equation}
\label{eq:empirical_distrib_cv}
\frac{1}{n}\sum_{v\in G_n} f(X_{\cl_v}^{G_n}(t)) \xrightarrow{p}\sum_{\vec{a} \in \stateS_\extra^{\V_1^\offspring} } f(\vec{a})\lawMF^\offspring_t(\vec{a}), \qquad  \text{as }n\rightarrow\infty,
\end{equation}
where $\cl_v=\{v\}\cup\partial_v$ and $\lawMF^\offspring$ is the solution to \eqref{eq:MF-ODE-UGW}-\eqref{eq:MF-ODE-UGW_initial} with initial conditions $p=\law(X_{\V_1^\offspring}(0))$. In particular, defining $\emp^{\offspring}_t\in \Pmc(\stateS)$ by $\emp^\offspring_t(a)=\sum_{\vec{b}\in \stateS^{\V_1^\offspring}} \one_{\{\vec{b}_0=a\}}\lawMF_t({\vec{b}})$, we have that 
\begin{equation}
\label{eq:main_limit_thm}
\emp^{G_n}_t \xrightarrow{p}\emp^{\infty}_t. 
\end{equation}
as $n\rightarrow\infty$.
\end{corollary}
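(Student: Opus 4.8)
The plan is to obtain \eqref{eq:empirical_distrib_cv} by composing three ingredients: the local weak convergence of the configuration models $G_n$ to the UGW$(\offspring)$ tree, the hydrodynamic limit of Proposition \ref{prop:hydrodynamic}, which transfers this graph convergence into convergence of the empirical measure of neighborhood trajectories towards the law of the root-neighborhood dynamics on the limit tree, and finally Theorem \ref{thm:main}, which identifies the time-$t$ marginal of that limiting law with $\lawMF^\offspring_t$. The statement \eqref{eq:main_limit_thm} will then follow by specializing \eqref{eq:empirical_distrib_cv} to indicator test functions.

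First I would record that, since $\tfrac1n\sum_{i=1}^n\delta_{D^{(n)}_i}\to\offspring$ weakly and $\offspring$ has finite support (so the degrees are uniformly bounded by $\dmax$), the random graphs $G_n=\CM_n(\vec D^{(n)})$ converge in the local weak (Benjamini--Schramm) sense to the UGW$(\offspring)$ tree $\tree$; this is classical (see \cite{RemcoBook} and \cite{GanRam2024}). Together with the hypotheses on the initial conditions and the rates, this places us in the setting of Proposition \ref{prop:hydrodynamic}, whose conclusion is that the empirical measure $\mu_n:=\tfrac1n\sum_{v\in G_n}\delta_{X^{G_n}_{\cl_v}[0,\infty)}$ of rooted-neighborhood trajectories converges in distribution, on the relevant Skorokhod path space, to the deterministic law of the root-neighborhood trajectory $X^{\tree}_{\cl_\root}[0,\infty)$ on the limit tree. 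Because the limit is a single deterministic measure, this convergence in distribution of the random element $\mu_n$ is equivalent to convergence in probability.

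The main work is then to pass from this trajectorial convergence to the stated time-$t$ marginal convergence. Fix $t\ge 0$ and let $\pi_t$ denote evaluation of the neighborhood configuration at time $t$, so that the left-hand side of \eqref{eq:empirical_distrib_cv} equals $\langle\mu_n, f\circ\pi_t\rangle$; note that $f$ is automatically bounded since $\stateS_\extra^{\V_1^\offspring}$ is a finite set. The map $f\circ\pi_t$ is bounded but not everywhere Skorokhod-continuous; its discontinuity set is, however, contained in the set of trajectories having a jump at the fixed time $t$, and since $X^{\tree}$ is a pure-jump process, for each fixed $t$ almost every trajectory under the limit law is continuous at $t$. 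Hence $f\circ\pi_t$ is continuous almost everywhere with respect to the deterministic limit measure, and the portmanteau/continuous-mapping argument for integrals of bounded, a.e.-continuous functions yields
\begin{equation*}
\frac1n\sum_{v\in G_n} f\big(X^{G_n}_{\cl_v}(t)\big) = \langle\mu_n, f\circ\pi_t\rangle \xrightarrow{p} \E\!\left[f\big(X^{\tree}_{\cl_\root}(t)\big)\right] = \E\!\left[f\big(X_{\V_1^\offspring}(t)\big)\right].
\end{equation*}
By Theorem \ref{thm:main}, $\law(X_{\V_1^\offspring}(t))=\lawMF^\offspring_t$, so the right-hand side equals $\sum_{\vec a}f(\vec a)\lawMF^\offspring_t(\vec a)$, proving \eqref{eq:empirical_distrib_cv}. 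I expect this passage --- controlling the fixed-time evaluation on Skorokhod space and upgrading distributional convergence to convergence in probability --- to be the only genuinely delicate point; the permutation-invariance of $\lawMF_t$ noted in the Remark ensures that the unordered neighborhood encoding is well defined and that matching $f$ against the rooted neighborhood is consistent with the local weak limit.

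Finally, \eqref{eq:main_limit_thm} follows by applying \eqref{eq:empirical_distrib_cv} to the indicator $f(\vec a)=\one_{\{a_\root=a\}}$ for each $a\in\stateS$: the left-hand side becomes $\emp^{G_n}_t(a)$, while the right-hand side becomes $\sum_{\vec b}\one_{\{\vec b_0=a\}}\lawMF_t(\vec b)=\emp^\offspring_t(a)=\emp^\infty_t(a)$. Since $\stateS$ is finite, this coordinatewise convergence in probability upgrades to $\emp^{G_n}_t\xrightarrow{p}\emp^\infty_t$ in $\Pmc(\stateS)$, as claimed.
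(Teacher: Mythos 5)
Your proposal is correct and follows essentially the same route as the paper: compose the hydrodynamic limit of Proposition \ref{prop:hydrodynamic} (via local weak convergence of the configuration model to the UGW tree) with Theorem \ref{thm:main}, then specialize to indicators for \eqref{eq:main_limit_thm}. The only difference is that you carry out an additional (correct, but unneeded) Skorokhod-continuity argument to pass from path-space convergence to the time-$t$ marginal, whereas the paper simply cites the hydrodynamic limit result already stated at the level of fixed-time neighborhood marginals.
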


Corollary \ref{cor:limit_thm_CM} follows from Theorem \ref{thm:main} by invoking a hydrodynamic limit result established in \cite{Gangulythesis}*{Corollary 4.7} which shows that $\emp_t^{G_n}$ converges to $\law(X_\root^{G_\infty}(t))$ whenever the sequence of finite graphs $G_n$ converges in a certain sense (\textit{local convergence in probability}, as defined in \cite{vanderHofstad2024vol2}*{Definition 2.11}; see also \cite{lacker2023local}*{Definition 2.2}  and \cite{GanRam2024}*{Definition 4.8}). The local weak convergence of the configuration model to the UGW tree is well known, see for instance \cite{vanderHofstad2024vol2}*{Theorem 41}. In fact, \cite{Gangulythesis}*{Corollary 4.7} establishes the stronger result that empirical distribution of the trajectories $X^{G_n}[t)$ converges to the law of $X^{G_\infty}_{\root}[t)$ for all $t\in\R_+$. We restate here the hydrodynamic limit for the empirical distribution of the states at time $t$, in the special case of configuration model graph and i.i.d. initial conditions. 
\begin{proposition}[Convergence of  empirical measures, \cite{GanRam2024}*{Corollary 4.12}]
     \label{prop:hydrodynamic} 
     For $n\in \N$ let $\vec{d}^{(n)}\in \natzero^n $ be a graphical sequence such that $\sum_{i=1}^n \delta_{d_{i}^{(n)}}$ converges weakly to $\offspring$ as $n\rightarrow\infty$, and $\offspring$ has finite support.  Let $G_n=\text{CM}_n(\vec{d}^{(n)})$ as given in Definition \ref{def:CM}.  
     Let $X^{G_n}$ be a IPS on $G_n$ with rates $\r=\{\r^j\}$ satisfying Assumption \ref{Ass:A_2}, and i.i.d initial conditions with marginals $p\in \Pmc(\stateS)$. Let $X$ be the IPS on the UGW($\offspring$) tree given by \eqref{eq:IPS_SDE} with i.i.d initial conditions with law $p$.  Then, for all $t\in\R_+$ and $f: \stateS_\extra^{d_{max}}\rightarrow \R_+$,
     \begin{equation}
     \label{eq:limit_emp_measure}
     \frac{1}{|G_n|}\sum_{v\in G_n}f(X_{\cl_v}^{G_n}(t))\xrightarrow{p} f(X_{\partial_\root}(t)),  \quad \text{as } n\rightarrow\infty.   
     \end{equation}

     In particular, 
\begin{equation*}
   \emp_t^{G_n}\xrightarrow{p} \law(X_\root), \qquad \text{as } n\rightarrow\infty,
\end{equation*}
where the empirical distribution $\emp_t^G$ is defined in \eqref{def:fractions}.
\end{proposition}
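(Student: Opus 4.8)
This is Corollary 4.12 of \cite{GanRam2024}; I outline the strategy by which such a statement is established. The plan is to reduce the claim to two ingredients: (i) the convergence in probability, in the local weak sense, of the marked graphs $(G_n, X^{G_n}(t))$ to the marked tree $(\tree, X(t))$, and (ii) a concentration estimate showing that the empirical average on the left-hand side of \eqref{eq:limit_emp_measure} has asymptotically vanishing variance, so that convergence of means upgrades to convergence in probability. Granting these, the map $(G,x)\mapsto f(x_{\cl_{\root_G}})$ is a bounded local functional, so ingredient (i) gives $\frac{1}{|G_n|}\sum_{v} f(X^{G_n}_{\cl_v}(t)) \to \E[f(X_{\cl_\root}(t))]$ in mean, and ingredient (ii) then yields the stated convergence in probability (the right-hand side of \eqref{eq:limit_emp_measure} being understood as this expectation).

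For ingredient (i), I would start from the well-known local weak convergence in probability of the unmarked configuration model $\CM_n(\vec{d}^{(n)})$ to the UGW$(\offspring)$ tree (e.g.\ \cite{vanderHofstad2024vol2}*{Theorem 41}), which holds precisely because $\sum_i \delta_{d_i^{(n)}}\to\offspring$ with $\offspring$ finitely supported. Since the initial conditions are i.i.d.\ with law $p$ on both $G_n$ and $\tree$, the marked initial data $(G_n, X^{G_n}(0))$ converges locally weakly in probability to $(\tree, X(0))$. The heart of the matter is then to show that the \emph{local, Markovian dynamics preserve local weak convergence}: here one uses that the rates depend only on the closed neighborhood $\cl_v$, together with Assumption \ref{Ass:A_2}, which bounds $\r^j$ by a function $C(d_v+1,t)$ that is finite for each fixed $t$. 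Because the degrees are uniformly bounded (finite support of $\offspring$), the number of transitions in any ball of fixed radius up to time $t$ is stochastically dominated by a Poisson random variable, so the ``domain of dependence'' of the root state $X^{G_n}_{\cl_\root}(t)$ is, with high probability, contained in a finite neighborhood. On that neighborhood the finite-graph dynamics and the tree dynamics can be coupled (driven by the same Poisson processes and initial marks) to agree up to time $t$, and well-posedness of both SDEs (Proposition \ref{prop:SDE-wellposed}) guarantees the coupling is consistent.

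For ingredient (ii), I would bound the variance of $\frac{1}{|G_n|}\sum_{v} f(X^{G_n}_{\cl_v}(t))$ by controlling the covariances $\Cov\big(f(X^{G_n}_{\cl_v}(t)),\, f(X^{G_n}_{\cl_w}(t))\big)$. The same finite-speed-of-propagation argument shows that whenever $v$ and $w$ lie at graph distance exceeding the (random, but stochastically bounded) radius of their domains of dependence, the two functionals are asymptotically independent; in a sparse graph the number of ordered pairs $(v,w)$ within any fixed distance is $O(|G_n|)$, so the sum of covariances is $o(|G_n|^2)$ and the variance tends to $0$. Combined with the convergence of the mean from ingredient (i), Chebyshev's inequality gives \eqref{eq:limit_emp_measure}, and specializing to $f(x)=\one_{\{x_\root=a\}}$ yields $\emp_t^{G_n}\xrightarrow{p}\law(X_\root(t))$.

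The main obstacle is the coupling underlying ingredient (i): one must rigorously show that, up to a fixed horizon $t$, the state of the root neighborhood on $G_n$ depends only on a finite ball whose radius does not grow with $n$, so that the local tree structure of $\CM_n$ around a typical vertex --- which matches $\tree$ with high probability --- suffices to transport the dynamics to the limit. Controlling the tail of this domain of dependence uniformly in $n$, while accommodating time-dependent, nonlinear, and merely c\`agl\`ad rates, is the delicate technical step, and it is exactly this propagation-of-local-convergence estimate that constitutes the substance of \cite{GanRam2024}.
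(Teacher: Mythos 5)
The paper gives no proof of this proposition --- it is quoted from \cite{GanRam2024}*{Corollary 4.12} and used as a black box, so there is no internal argument to compare against. Your outline correctly reconstructs the strategy of the cited proof: local convergence in probability of the marked graph sequence to the UGW tree, continuity of the local dynamics under this convergence via a finite-speed-of-propagation coupling, and asymptotic decorrelation of distant neighborhoods (which in the Ganguly--Ramanan framework is largely packaged into the definition of local convergence in probability rather than appearing as a separate Chebyshev step); you also rightly observe that the right-hand side of \eqref{eq:limit_emp_measure} must be read as the deterministic limit $\E[f(X_{\cl_\root}(t))]$.
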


\section{Ramifications and simulations}
\label{sec:ram_and_simu}
\subsection{Algorithm} 
\label{subsec:ram_and_simu_algo} In this section, we describe how our result can be used to approximate the empirical distribution $\emp_t^{G}$ of an IPS at time $t$ on any finite graph $G=(V,E).$ 
Let $\empirdegree^G$ denote its empirical degree distribution: for $k\in \N,$
\begin{equation}
\label{eq:empirical_degree_distribution}
\empirdegree_G(k):= \frac{1}{|G|}\sum_{v\in G} \one_{\{d_v=k\}}, \qquad k\in\natzero.
\end{equation}

Given a finite graph $G$, Corollary \ref{cor:limit_thm_CM} naturally suggests  the following approximation for the empirical distribution  $\emp^G_t$ of the IPS on $G$: approximate the empirical distribution $\emp^G_t(a)$ by $\sum_{\vec{b}\in\cup_{n\in\supoff}\stateS^{n+1}} \one_{\{\vec{b}_0=a\}}\lawMF^{\empirdegree_G}_t({\vec{b}})$, where $\empirdegree_G$  is defined in \eqref{eq:empirical_degree_distribution} and $\lawMF^{\empirdegree_G}$ is the unique solution to the ODE \eqref{eq:MF-ODE-UGW}-\eqref{eq:MF-ODE-UGW_initial}, with $\offspring=\empirdegree_G$ from Proposition \ref{prop:ODE-wellposed}. Note that since $G$ is finite, we automatically have $|\Supp(\empirdegree_G)|<\infty.$

\subsection{Examples}
In this section, we provide three examples of models that fall within the framework of our results.
\label{subsec:ram_and_simu_examples}
\subsubsection{Seizure propagation dynamics}
\label{subsubsec:neural_SIR}
A probabilistic model of seizure-spreading dynamics in the brain was introduced in \cite{MoosTrucc23}. This model can be considered as an SIR-type model with types that take into account the inhibitory or excitatory effect of neurons on the dynamics. Vertices represent neurons in various states with respect to a seizure. The state S corresponds to the state of a neuron before a seizure, I indicates that a seizure is currently spreading through the neuron and R represents a resting state after the seizure. Unlike classical SIR dynamics, the transition rates between states are nonlinear and the transition from state I to R also depends on the states of neighboring neurons. We introduce a Markov idealization of their model that ignores delays in the transition rates. For notational convenience, we identify the states of the nodes (susceptible to propagating the seizure, in a seizure, or recovering from a seizure) with 0,1 and 2 respectively, and allow two possible values of a neuron's excitability, $-\excit_-$ and $\excit_+$.

Let $\excit_-,\excit_+\in\R_+$ and let $\stateS=\{0,1,2\}\times \{-\excit_-,\excit_+\}$ be the state space of the processes $X^{G_n}=(X^{G_n}_v)_{v\in V}$, where $G_n=CM_n(\offspring)$ with degree distribution $\offspring$ satisfying $\Supp(\offspring)<\infty$ and permissible jump space $\Jmc=\{(1,0)\}$, defined as the solution to the following system of SDEs:
\begin{equation}
\label{eq:nSIR_SDE}
    X_v(t)= X_v(0) + \int_{(0,t)\times \R_+}  \vec{j}_1\one_{\{u<\r(s,X_v(s-),X_{\partial_v}(s-))\}} \mathbf{N}_v(\diff s,\diff u),
\end{equation}
where $\vec{j_1}=(1,0)$, that is, each neuron's intrinsic excitability does not change over time, 
and the transition rates are given by, for $v\in \V$ and $s\in\R_+$, $y_v,\excit_v)\in\stateS_\extra$, and $(x,\excit))\in\stateS_\extra^{\dmax}$,
\begin{equation}
    \label{eq_general_simplified_rate}
    \r(s,(y,\excit_v),(x,\excit)):= \begin{cases}
        0 \vee (\excit_v+\sum_{w\in \partial v} (\beta \one_{\{x_w(s-)=1\}} -\excit_w^- \one_{\{x_w(s-)=0\}})) & \text{ if } y(s-)=0,
        \\  0 \vee \frac{1 + \sum_{w\in \partial v }\excit_w^-\one_{\{x_w(s-)=0\}}}{d- \left(1 + \sum_{w\in \partial v}\excit_w^-\one_{\{x_w(s-)=0\}}\right)} & \text{ if } y(s-)=1,
    \end{cases}
\end{equation}
where $\excit_w^-=-\excit_w\vee 0.$
This model falls within the class of IPS dynamics \eqref{eq:IPS_SDE} and satisfies Assumptions \ref{Ass:A_1}, \ref{Ass:B_1}, \ref{Ass:A_2} and \ref{Ass:A_3}. Unlike classical SIR models, it has nonlinear transition rates, wherein neurons with negative excitability inhibit the propagation of the seizure and facilitate recovery from it in their neighborhood.

Figure \ref{fig:nSIR} illustrates the accuracy of this approximation even for graphs of moderate size.
\begin{figure}
    \centering
    \includegraphics[width=\linewidth]{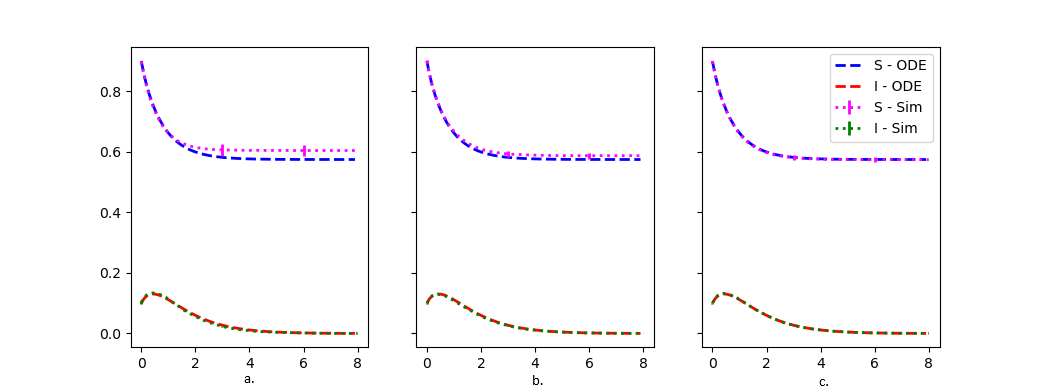}
    \caption{Fraction of individuals in states S and I in the seizure propagation model from Section \ref{subsubsec:neural_SIR} on the random 3-regular graph with the number of vertices equal to $n=50$ (a), $200$ (b), $400$ (c). We compare our ODE (dashed lines) with simulations (dotted lines) averaged over 500 runs.}
    \label{fig:nSIR}
\end{figure}
    
\subsubsection{The entrenched majority voter model}
\label{subsubsec:voter_model}
We introduce here a toy voter model on the UGW tree. We first give an informal description of the model. Each vertex represents an individual in one of three states: undecided (0), blue (1) and red (-1). At random times given by point processes associated with each vertex, undecided voters may change state depending on whether blue or red neighbors have the majority. If there is no majority, the vertex stays undecided. Once a vertex is in state blue or red, it stays there. 

This model can also be thought of as a toy model to represent two competing expanding populations on a sparse graph.

This model has state space $\stateS=\{-1,0,1\}$. We consider the processes $X^{G}=(X^{G}_v)_{v\in V}$, where $G$ is a UGW tree with offspring distribution $\offspring$ satisfying Assumption \ref{Ass:A_1}, defined as the solution to the system of SDEs \eqref{eq:IPS_SDE} with

\begin{equation*}
\r^{1}(s,X_v(s-),X_{\partial_v}(s-))=\one_{\{X_v(s-)=0\}}\one_{\{|\{v \in \partial_v, X_v(s-)=1\}|>|\{v \in \partial_v, X_v(s-)=-1\}|\}}
\end{equation*}
and
\begin{equation*}
\r^{-1}(s,X_v(s-),X_{\partial_v}(s-))=\one_{\{X_v(s-)=0\}}\one_{\{|\{v \in \partial_v, X_v(s-)=1\}|<|\{v \in \partial_v, X_v(s-)=-1\}|\}}.
\end{equation*}

Note that for this model, the mean-field framework, which consists of approximating $G$ by the complete graph, provides a poor approximation due to the disproportionate influence of the initial conditions. Indeed, if the underlying graph is complete, majority decisions always go the way of the initial majority opinion: if at time 0, there are more vertices in state 1 than in state -1, all state changes in the dynamics will be from 0 to 1. Figure \ref{fig:entrenched_majority} illustrates the poor performance of the mean-field regime, while showcasing that the Markov local-field performs well in this setting.

\begin{figure}
    \centering
    \includegraphics[scale=0.5]{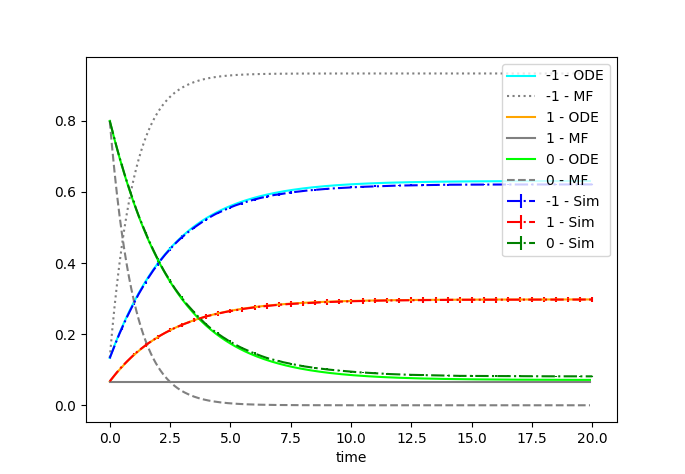}
    \caption{Fraction of individuals in each state of the entrenched majority voter model from Section \ref{subsubsec:voter_model} on the random $2-$regular graph with 200 vertices. We compare our ODE,  simulations (500 realizations), and the mean-field approximation.}
\label{fig:entrenched_majority}
\end{figure}

\subsubsection{Multivariate Markov Hawkes processes with threshold}
\label{subsubsec:Hawkes}
Hawkes processes, originally developed to model earthquakes, have in recent years been extensively used in computational neuroscience to model spiking neurons \cite{ditlevsen_locherbach_17}\cite{ZAN_2022}. We introduce here a modified version of Hawkes processes with a fixed threshold as our main result Theorem \ref{thm:main} requires the state space to be finite.

Let $G=(V,E)$ be a finite graph. An $(\mathcal{F}_t)$-adapted multivariate counting process $(X_v(t))_{t\geq 0, v \in V}$ defined on a filtered probability space $(\Omega,\mathcal{F},(\mathcal{F}_t)_{t\geq 0},\P)$ is hereafter referred to as a multivariate Hawkes process on $G$ with a threshold $M>0$ if $\P$-almost surely, for all $t\geq 0$ and $v\in V,$ 
\begin{equation}
\label{eq:Hawkes_threshold_def}
X_v(t)=M \wedge \left(\int_0^t \int_0^\infty \mathbf{1}_{\{u\leq \rho(s,X_v(s-),X_{\partial_v}(s-))\}} \mathbf{N}_v(\diff s,\diff u)\right),
\end{equation}
where
\begin{equation}
\label{eq:Hawkes_threshold_rate}
\rho(t,X_v(t-),X_{\partial_v}(t-))=f\left(u_v(t)+\sum_{w \in \partial_v} \int_0^{t-} h(t-s) X_w(\diff s)\right),
\end{equation}
with $f,(u_v)$ and $h$ are functions satisfying the following conditions:
\begin{itemize}
    \item the functions $(u_v)_{v \in V}:\R_+\rightarrow\R$ are uniformly bounded in $t$ and $v$.
    \item the function $f:\R\rightarrow\R_+$ is Lipschitz-continuous and either non-negative or equal to identity with $u_v\geq 0$ for all $v\in V$
    \item the function $h:\R_+\rightarrow\R$ is locally square integrable on $[0,+\infty)$;
\end{itemize}
The multivariate Hawkes process is generally non-Markovian. However, it becomes Markov in the special case when for all $t\geq 0,$ $h(t)=\alpha e^{-\beta t},$ with $\alpha,\beta>0$ \cite{Oakes75}.
In this case, $X_v$ defined by \eqref{eq:Hawkes_threshold_def} falls within the IPS framework \eqref{eq:IPS_SDE} with state space $\stateS=\{0,1,\ldots,M\}$ and permissible jump space $\Jmc=\{1\}$, and satisfies Assumptions \ref{Ass:A_1}, \ref{Ass:B_1}, \ref{Ass:A_2} and \ref{Ass:A_3}.
\section{Outline of the proof}
\label{sec:proof_outline}

\subsection{Summary of prior results}
\label{subsec:outline_summary_past_results}

\subsubsection{The local-field equations (LFE)}
\label{subsec:LFE}
We first recall a result from \cite{GanRamUGW} that autonomously characterizes the law of $X_{\V^\offspring_1}$ as the solution of a certain jump SDE called the local-field equation. However, this characterization shows that, in general, $X_{\V^\offspring_1}$ is non-Markovian and extremely challenging to analyze and simulate. Our contribution is to derive a tractable characterization of the time-space marginals $X_{\V^\offspring_1}(t)$, $t\in\R_+$.

\begin{definition}[Local-field Equations]\label{def:lfe}
Let $\Nlf=\{\Nlf_v^j\}_{j\in \Jmc, v\in\V_1^\offspring}$ be a collection of independent Poisson processes on $\R_+\times\R_+$ with 
unit intensity, $\Xlfinitial$ be a random element in $\stateS_\extra^{\V^\offspring_1}$ such that $\Xlfinitial\deq X_{\V^\offspring_1}(0)$, and set $D=\sum_{v\in\V_1^\offspring} \one_{\{\Xlfinitial_v \neq \extra\}}$. We define a stochastic process $\Xlf$ on $\stateS_\extra^{\V^\offspring_1}$ by the following SDE:

    \begin{equation}\label{eq:lf-SDE}
        \Xlf_v(t)= \Xlfinitial_v+\sum_{j \in \Jmc}\int_{(0,t)\times \R_+} j  \one_{\{u<\rlf_v^{j}(s,\Xlf)\}} \mathbf{\Nlf}^j_v(\diff s,\diff u),
    \end{equation}
    where $\rlf_v^j:[0,\infty)\times \Dmc^{\V_1^\offspring}\rightarrow [0,\infty)$ is given by
    \begin{equation}\label{eq:lf-rates}
        \rlf_v^j(t,x):= \begin{cases}
            \r^j(t,x_\root(t-),x_{\partial_\root}(t-)) & \text{ if } v=\root,
            \\   \displaystyle \frac{\E[D\r^j(t,\Xlf_\root(t-),\Xlf_\root(t-))| \Xlf_\root[t)=x_v[t) , \Xlf_1[t)=x_\root[t)] }{\E[D|  \Xlf_\root[t)=x_v[t) , \Xlf_1[t)=x_\root[t)] }& \text{ otherwise } .
        \end{cases}
    \end{equation}
    We refer to equations \eqref{eq:lf-SDE}-\eqref{eq:lf-rates} as the \textit{local-field equation}, henceforth abbreviated to LFE.
\end{definition}
\begin{remark}
    We note that, while $\rmf_v^j$ is defined as a function on $[0,\infty)\times\Dmc^{\V_1^\offspring}$, for any $t\in[0,\infty)$, $\rmf_v^j(t,x)$ only depends on $x[t)$, the trajectory up to time $t$. More precisely, for every $x,y\in\Dmc^{\V_1^\offspring}$ such that $x[t)=y[t)$, we have that $\rmf_v^j(t,x)=\rmf_v^j(t,y)$.
\end{remark}

\begin{theorem}\label{thm:LFE}
    Suppose that Assumptions \ref{Ass:A_1}, \ref{Ass:B_1} and \ref{Ass:A_2} hold. Then, we have
    \begin{equation*} \law(X_{\V_1^\offspring})=\law(\Xlf).
    \end{equation*}
\end{theorem}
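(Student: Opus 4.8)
The plan is to realize both $X_{\V_1^\offspring}$ and $\Xlf$ as solutions of one and the same characterization problem, and to conclude by a uniqueness argument. Since the dynamics in \eqref{eq:IPS_SDE} and \eqref{eq:lf-SDE} are both driven by independent unit-rate Poisson processes, each process is determined in law by the stochastic intensities of the jump counting processes of its coordinates with respect to its own filtration. I would therefore aim to show that, relative to the filtration $(\Fmc_t)$ generated by the observed coordinates $X_{\V_1^\offspring}$, the size-$j$ transitions of each vertex $v\in\V_1^\offspring$ carry precisely the intensity prescribed by the LFE rates \eqref{eq:lf-rates} evaluated along $X_{\V_1^\offspring}$ itself; this exhibits $X_{\V_1^\offspring}$ as a solution of the LFE, and well-posedness of the LFE then forces $\law(X_{\V_1^\offspring})=\law(\Xlf)$.

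The root case is immediate: by \eqref{eq:IPS_SDE} the intensity of a size-$j$ jump at $\root$ is $\r^j(t,X_\root(t-),X_{\partial_\root}(t-))$, and since $\V_1^\offspring$ contains $\root$ together with all of its neighbors, this quantity is already $(\Fmc_t)$-adapted and hence equals its own predictable projection, matching the first branch of \eqref{eq:lf-rates}. The content lies in a neighbor $v\in\partial_\root$, whose true intensity $\r^j(t,X_v(t-),X_{\partial_v}(t-))$ depends on the unobserved descendants of $v$. By the predictable-projection (innovations) theorem for point-process intensities, restricting to the sub-filtration $(\Fmc_t)$ replaces this intensity by $\E[\r^j(t,X_v(t-),X_{\partial_v}(t-))\mid \Fmc_{t-}]$, so the problem reduces to evaluating this conditional expectation in closed form.

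The main obstacle, and the crux of the proof, is to show that this conditional expectation collapses to the explicit ratio in the second branch of \eqref{eq:lf-rates}. Here I would invoke the second-order Markov random field property of the trajectories of $X$ from \cite{GanRamMRF}: severing the tree along the edge $\{\root,v\}$, the subtree rooted at $v$ and pointing away from $\root$ is, conditionally on the trajectory pair $(X_\root[t),X_v[t))$, independent of all remaining coordinates of $X_{\V_1^\offspring}$. This conditional independence reduces the conditioning on all of $\Fmc_{t-}$ to conditioning on $(X_\root[t),X_v[t))$ alone. Re-rooting that subtree at $v$ and using the unimodularity of $(\tree,X_{\tree}(0))$ from Assumption \ref{Ass:B_1}, the joint law of $v$ and its outgoing subtree matches that of the root neighborhood of $\Xlf$ under the identification $\Xlf_\root\leftrightarrow X_v$, $\Xlf_1\leftrightarrow X_\root$; the size-biasing intrinsic to the UGW construction (the offspring of non-root vertices being distributed as $\hat\theta$ rather than $\theta$) is exactly what is encoded by the degree weight $D$ appearing in both numerator and denominator of \eqref{eq:lf-rates}. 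This step is delicate because the re-rooting bijection, the conditional-independence reduction, and the degree-weighted change of measure must be carried out simultaneously and consistently, and it is precisely where unimodularity and the finite-support assumption on $\theta$ enter.

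Finally, having matched the $(\Fmc_t)$-intensities of $X_{\V_1^\offspring}$ with the LFE rates evaluated along its own law, I would invoke well-posedness of the local-field equation \eqref{eq:lf-SDE}--\eqref{eq:lf-rates}, which holds under Assumptions \ref{Ass:A_1}, \ref{Ass:B_1}, \ref{Ass:A_2} in view of the boundedness \eqref{eq:rates_bound} of the rates, to conclude that $X_{\V_1^\offspring}$ and $\Xlf$ have the same law. I expect the intensity-matching bookkeeping and the well-posedness to be routine given the cited inputs, with essentially all the difficulty concentrated in the 2-MRF-plus-unimodularity reduction of the conditional rate.
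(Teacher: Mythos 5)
The paper does not actually prove Theorem \ref{thm:LFE}: its ``proof'' is a citation to \cite{GanRamUGW} (and to \cite{Gangulythesis} for the regular-tree case), so there is no in-paper argument to compare yours against line by line. What you have written is a high-level reconstruction of the argument in those references, and the strategy is the right one: project the true intensities onto the filtration of $X_{\V_1^\offspring}$, use the trajectorial 2-MRF property to collapse the conditioning to the edge trajectories $(X_\root[t),X_v[t))$, use unimodularity to re-root and account for the size-biasing via the degree weight $D$, and close the loop with well-posedness of the LFE. These are indeed the ingredients of the cited derivation.

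That said, as a standalone proof your proposal has two load-bearing steps that are asserted rather than established, and they are precisely where the substance of \cite{GanRamUGW} lies. First, the identification of $\E[\r^j(t,X_v(t-),X_{\partial_v}(t-))\mid \Fmc_{t-}]$ with the ratio in the second branch of \eqref{eq:lf-rates} requires carrying out the re-rooting, the mass-transport computation behind the weight $D$, and the 2-MRF reduction simultaneously; you correctly flag this as the crux but do not execute it. Second, you describe well-posedness of the LFE as routine, but the LFE is a nonlinear equation whose rates depend on the law $\law(\Xlf[t))$ of its own solution, so uniqueness is a McKean--Vlasov-type fixed-point statement, not a consequence of the pointwise bound \eqref{eq:rates_bound} alone; without it, exhibiting $X_{\V_1^\offspring}$ as one solution does not yield $\law(X_{\V_1^\offspring})=\law(\Xlf)$. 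Neither issue reflects a wrong idea, but both mean the proposal is an outline of the external proof rather than a proof; to make it self-contained you would need to supply the conditional-rate computation and the uniqueness argument, or else do what the paper does and cite them.
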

\begin{proof}
This is established in \cite{GanRamUGW}. For the special case of a regular tree, see \cite{Gangulythesis}.
\end{proof}

\begin{remark}
Even  when the
dynamics given by \eqref{eq:IPS_SDE} are Markov, the $\Xlf$ is in general a non-Markovian process. Moreover, $\Xlf$ is nonlinear in the sense that the infinitesimal evolution at time $t$ depends on the law $\law(\Xlf[t))$, rather that only on the trajectory $\Xlf[t)$.
\end{remark}
\subsubsection{A trajectorial 2-MRF property}
\label{subsec:traj_2-MRF}
A key ingredient in the derivation of the local-field equations is the following 2-MRF property for trajectories of the IPS:

\begin{theorem}\label{thm:regular-2mrf}
Suppose that  Assumptions \ref{Ass:A_1},\ref{Ass:B_1} and \ref{Ass:A_2} hold. Let $X$ be the solution to the IPS \eqref{eq:IPS_SDE}, and for $t>0$, let $X[t]:=\{X(s), s\in [0,t]\}$ denote the the trajectory of $X$ up to time $t.$ Then for every $t\in [0,\infty),$  $X[t]=(X_v[t])_{v\in\V}$ forms a 2-MRF w.r.t. $G$ in the sense of Definition \ref{def:2-MRF}.
\end{theorem}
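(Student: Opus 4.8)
The plan is to realize the law of the whole trajectory as the image of a $2$-MRF under per-vertex local maps, reweighted by a density that factorizes over stars, and to read off \eqref{eq:2-MRF} from this factorization. Since the conditional independence in Definition \ref{def:2-MRF} only involves finitely many coordinates at a time, I first fix finite sets $A'\subset A$ and $B'\subset V\setminus(A\cup\partial^2 A)$ and reduce to proving $X_{A'}[t]\perp X_{B'}[t]\mid X_{\partial^2 A}[t]$; here $|\partial^2 A|<\infty$ guarantees we condition on only finitely many trajectories. The basic structural observation is that, on the tree $G$, any $a\in A$ and $b\in V\setminus(A\cup\partial^2 A)$ satisfy $d_G(a,b)\ge 3$, so no closed star $\cl_v=\{v\}\cup\partial_v$ can contain both a vertex of $A$ and a vertex of $V\setminus(A\cup\partial^2 A)$; every star is therefore contained in $A\cup\partial^2 A$ or in $(V\setminus(A\cup\partial^2 A))\cup\partial^2 A$. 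This is exactly where the \emph{second} order enters: the nearest-neighbour rates in \eqref{eq:IPS_SDE} couple, through a single star, pairs of vertices at graph distance $2$.

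I would introduce a reference law $Q$ under which the initial configuration $X(0)$ retains its true law but each coordinate subsequently evolves as an \emph{independent} pure-jump process with fixed positive reference intensities, so that $X_v[t]=F_v\big(X_v(0),\mathbf{N}_v\big)$ is a function of that vertex's initial value and its own driving noise alone. Under $Q$ the only coupling between blocks is through the initial condition: conditionally on the initial boundary values $X_{\partial^2 A}(0)$, the $2$-MRF property from Assumption \ref{Ass:B_1}(1) gives $X_A(0)\perp X_B(0)$, and the independent free evolutions then yield $X_A[t]\perp X_B[t]\mid X_{\partial^2 A}(0)$. Moreover $X_{\partial^2 A}(0)$ is recovered as the initial value of the path $X_{\partial^2 A}[t]$, while the remaining information in $X_{\partial^2 A}[t]$ is, conditionally on $X_{\partial^2 A}(0)$, a function of the noise $\mathbf{N}_{\partial^2 A}$, which is independent of everything pertaining to $A$ and $B$. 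A short conditional-independence calculus then upgrades this to $X_A[t]\perp X_B[t]\mid X_{\partial^2 A}[t]$ under $Q$.

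Next I pass from $Q$ to the true law $P$ of the solution of \eqref{eq:IPS_SDE} by the change-of-intensity (Girsanov) formula for jump processes: $\tfrac{\diff P}{\diff Q}$ is a product over $v\in\V$ and $j\in\Jmc$ of factors
\begin{equation*}
\exp\Big(\int_0^t \log \r^j\big(s,X_v(s-),X_{\partial_v}(s-)\big)\, N^j_v(\diff s)-\int_0^t \r^j\big(s,X_v(s-),X_{\partial_v}(s-)\big)\,\diff s\Big),
\end{equation*}
where $N^j_v$ is the counting measure of the type-$j$ jumps of $X_v$ (with the usual conventions where an intensity vanishes); the bound in Assumption \ref{Ass:A_2} makes the exponents integrable. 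Each factor is measurable with respect to the star $\cl_v$, so by the dichotomy from the first paragraph the density splits as $\tfrac{\diff P}{\diff Q}=\Phi_{\mathrm{in}}\,\Phi_{\mathrm{out}}$, with $\Phi_{\mathrm{in}}$ measurable with respect to $X_{A\cup\partial^2 A}[t]$ and $\Phi_{\mathrm{out}}$ with respect to $X_{(V\setminus(A\cup\partial^2 A))\cup\partial^2 A}[t]$. Combining this product form with the conditional independence already established under $Q$, a direct computation of $\E_P[\,f(X_{A'}[t])g(X_{B'}[t])\mid X_{\partial^2 A}[t]\,]$ as a ratio of $Q$-conditional expectations factorizes the numerator and denominator and yields $X_{A'}[t]\perp X_{B'}[t]\mid X_{\partial^2 A}[t]$ under $P$, which is \eqref{eq:2-MRF}.

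The main obstacle is that $\tree$ is infinite, so neither the reference law $Q$ nor the global density $\tfrac{\diff P}{\diff Q}$ literally exists, and the product defining $\Phi_{\mathrm{out}}$ involves infinitely many far-away factors. I would handle this by exhausting $\tree$ with an increasing sequence of finite subtrees containing $A'\cup\partial^2 A\cup B'$, imposing a boundary rule (for instance freezing the states outside the subtree), running the finite change-of-measure and factorization argument there, and then passing to the limit using the well-posedness and stability of \eqref{eq:IPS_SDE} from Proposition \ref{prop:SDE-wellposed} together with the uniform rate bound of Assumption \ref{Ass:A_2} to control the contribution of distant vertices. Since conditional independence is a closed condition on the finitely many relevant coordinates, it survives the limit. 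The genuinely delicate point is that $X_{A'}[t]$ has no finite domain of dependence — information propagates across the whole tree through the rates — so the argument cannot rely on any finite-speed-of-propagation heuristic and must instead exploit the conditioning on $\partial^2 A$ through the clean factorization of the density.
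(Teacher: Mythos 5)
The paper does not actually prove this statement from scratch: its proof is a one--line citation of \cite{GanRamMRF}*{Theorem 3.7} after checking that Assumptions \ref{Ass:A_2} and \ref{Ass:B_1} imply the hypotheses there. Your argument is, in substance, a reconstruction of that cited proof: the change of measure to a reference law under which particles evolve independently given a 2-MRF initial condition, the Girsanov density factorizing into terms measurable with respect to the closed stars $\cl_v$, the geometric dichotomy that no star can meet both $A$ and $V\setminus(A\cup\partial^2 A)$ (which is exactly why the \emph{second}-order boundary is the right object for nearest-neighbour rates), and the standard fact that conditional independence under $Q$ is preserved under a density of the form $\Phi_{\mathrm{in}}\Phi_{\mathrm{out}}$. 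All of these steps are correct, including the reduction to finite $A'$, $B'$ and the ``short conditional-independence calculus'' under $Q$ (which works because $\sigma(X_{\partial^2 A}(0))\subseteq\sigma(X_{\partial^2 A}[t])\subseteq\sigma(X_{\partial^2 A}(0),\mathbf{N}_{\partial^2 A})$ and the conditional law of the $A$- and $B$-blocks given the larger $\sigma$-algebra depends only on $X_{\partial^2 A}(0)$).

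The one step that would fail as written is the final sentence of your limiting argument: conditional independence is \emph{not} a closed condition under convergence in law, even for finitely many coordinates (discrete approximations furnish standard counterexamples), so you cannot conclude merely from convergence of the truncated dynamics that \eqref{eq:2-MRF} survives the limit. What rescues the argument in this setting is the stronger statement that the finite-volume and infinite-volume processes can be coupled so as to \emph{coincide} on the finite window $A'\cup\partial^2 A\cup B'$ up to time $t$ with probability tending to one as the truncation grows; this is the spatial localizability of the dynamics from \cite{GanRam2024}, which the present paper invokes elsewhere (see the proof of Lemma \ref{lem:simplicity_PP}), and under such a coupling conditional independence does pass to the limit because the joint laws converge in total variation. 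You should also be slightly more careful with the Girsanov factor when a rate vanishes (the multiplicative convention that the density is zero on paths exhibiting a jump at a time of zero intensity, rather than a literal $\log 0$), but the uniform degree bound from Assumption \ref{Ass:A_1} together with \eqref{eq:rates_bound} makes the exponents finite and the density well defined on each finite truncation.
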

\begin{proof}
    This follows from \cite{GanRamMRF}*{Theorem 3.7} upon observing that Assumptions \ref{Ass:A_2} and \ref{Ass:B_1} imply \cite{GanRamMRF}*{Assumptions 3.1 and 3.4}.
\end{proof}

\subsubsection{Unimodularity}
\label{subsec:unimod}
We now give a precise definition of unimodularity, a symmetry property appearing in Assumptions \ref{Ass:A_1} and \ref{Ass:B_1} which transcribes the intuition that a random, possibly infinite, graph "looks the same" from every vertex. We refer to \cite{aldous-lyons} for a more thorough discussion. 

A \emph{rooted graph} $(G,o)$ is a connected graph equipped with a distinguished vertex $o$, where we assume $G$ has finite or countable vertex set and is locally finite, meaning each vertex has finitely many neighbors.
An \emph{isomorphism} from one rooted graph $(G_1,o_1)$ to another $(G_2,o_2)$ is a bijection $\varphi$ from the vertex set of $G_1$ to that of $G_2$ such that $\varphi(o_1)=o_2$ and such that $(u,v)$ is an edge in $G_1$ if and only if $(\varphi(u),\varphi(v))$ is an edge in $G_2$.
We say two rooted graphs  are \emph{isomorphic} if there exists an isomorphism \ between them, and we let $\G_*$  denote the set of isomorphism classes of rooted graphs. Similarly, a \emph{doubly rooted graph} $(G,o,o')$ is a rooted graph $(G,o)$ with an additional distinguished vertex $o'$ (which may equal $o$). Two doubly rooted graphs $(G_i,o_i,o_i')$ are isomorphic if there is an isomorphism from $(G_1,o_1)$ to $(G_2,o_2)$ which also maps $o_1'$ to $o_2'$. We write $\G_{**}$ for the set of isomorphism classes of doubly rooted graphs.

There are analogous definitions for \emph{marked rooted graphs}.
An \emph{$\X$-marked rooted graph} is a tuple $(G,x,o)$, where $(G,o)$ is a rooted graph and $x=(x_v)_{v \in G} \in \X^G$ is a vector of marks, indexed by vertices of $G$.
We say that two marked rooted graphs $(G_1,x^1,o_1)$ and $(G_2,x^2,o_2)$  are \emph{isomorphic} if there exists an isomorphism $\varphi$
between the rooted graphs $(G_1,o_1)$ and $(G_2,o_2)$ that maps the marks of one to the marks of the other (i.e., for which $x^1_{\varphi(v)} = x^2_v$  for all $v \in G$). Let  $\G_*[\X]$ denote the set of isomorphism classes of $\X$-marked rooted graphs.
A double rooted marked graph is defined in the obvious way, and $\G_{**}[\X]$ denotes the set of isomorphism classes of doubly rooted marked graphs.

These spaces of graphs come with natural topologies. For $r \in \N$ and $(G,o)\in\G_*$, let $B_r(G,o)$ denote the induced subgraph of $G$ (rooted at $o$) containing only those vertices with (graph) distance at most $r$ from the root $o$. The distance between $(G_1,o_1)$ and $(G_2,o_2)$ is defined as the value $1/(1+\bar{r})$, where $\bar{r}$ is the supremum over
$r \in \N_0$ such that $B_r(G_1,o_1)$ and $B_r(G_2,o_2)$ are isomorphic,
where we interpret $B_0(G_i,o_i) = \{o_i\}$.  The distance between two marked graphs $(G_i,x^i,o_i)$, $i = 1, 2$, is likewise defined as the value $1/(1+\bar{r})$, where $\bar{r}$ is the supremum over $r \in \N_0$ such that there exists an isomorphism $\varphi$ from $B_r(G_1,o_1)$ to $B_r(G_2,o_2)$ such that $d(x^1_v,x^2_{\varphi(v)}) \le 1/r$ for all $v \in B_r(G_1,o_1)$. We equip $\G_{**}$ and $\G_{**}[\X]$ with similar metrics, just using the union of the balls at the two roots, $B_r(G,o) \cup B_r(G,o')$, in place of the ball around a single root $B_r(G,o)$. Metrized in this manner, the spaces $\G_*$ and $\G_{**}$ are Polish spaces, as are $\G_*[\X]$ and $\G_{**}[\X]$ if $\X$ is itself a Polish space. See \cite{Bordenave2016}*{Lemma 3.4} for a proof
that $\G_*[\X]$ is a Polish space. 
Each space $\G_*[\X]$ and $\G_{**}[\X]$ is equipped with its Borel $\sigma$-algebra.

We are now ready to introduce the definition of unimodularity for general graphs.

\begin{definition}[Unimodular Graph]  \label{def-unimodular}
For a metric space $\X$,  
we say that a $\G_*[\X]$-valued random element $(G,Y,o)$ is \emph{unimodular}
if the following \emph{mass-transport principle} holds: for every (non-negative) bounded Borel measurable function $F : \G_{**}[\X] \rightarrow \R_+$,
\begin{equation}
\E\left[\sum_{o' \in G}F(G,Y,o,o')\right] = \E\left[\sum_{o' \in G}F(G,Y,o',o)\right].   \label{eq:unimodularity}
\end{equation}

\end{definition}

\subsection{A time marginal 2-MRF property}
\label{subsec:proof_outline_2-MRF(t)}

The first ingredient of the proof of Theorem \ref{thm:main} is a conditional independence property that plays a key role in establishing an autonomous characterization of $\law(X_{\V^\offspring_1}(t))$ for all $t\geq 0$.
Namely, we show that the time marginals of the dynamics form a 2-MRF in the sense of Definition \ref{def:2-MRF}. This is a stronger result compared to Theorem \ref{thm:regular-2mrf}.

\begin{theorem}
\label{thm:2MRF_Markov}
    Suppose that Assumptions \ref{Ass:A_1}, \ref{Ass:B_1}, \ref{Ass:A_2} and \ref{Ass:A_3}  hold. Then, for every $t\in [0,\infty)$, $X(t)=(X_v(t))_{v\in\V}$ forms a 2-MRF w.r.t. $G$ in the sense of Definition \ref{def:2-MRF}.
\end{theorem}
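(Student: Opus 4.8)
The plan is to bootstrap from the trajectorial $2$-MRF of Theorem \ref{thm:regular-2mrf}, which holds \emph{without} Assumption \ref{Ass:A_3}, to the time-marginal statement, which genuinely needs the acyclic structure. Fix $A\subset V$ with $|\partial^2 A|<\infty$ and write $B:=\partial^2 A$ and $C:=V\setminus(A\cup B)$. Since conditional independence of the $\sigma$-algebras generated by $X_A(t)$ and $X_C(t)$ given $X_B(t)$ can be tested on finite-dimensional events, I would first reduce to showing $X_{A'}(t)\perp X_{C'}(t)\mid X_B(t)$ for arbitrary finite $A'\subset A$ and $C'\subset C$. On the UGW tree, $B$ is a finite two-layer separator, and it is natural to split $B=B_1\cup B_2$ into the vertices at graph distance $1$ and $2$ from $A$. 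The dynamics on $A\cup B_1$ is then driven only by the trajectory $X_{B_2}[t]$ (through the rates of $B_1$) together with its own Poisson clocks, and symmetrically the dynamics on $C\cup B_2$ is driven only by $X_{B_1}[t]$; the independence of the clocks on the two sides is what underlies Theorem \ref{thm:regular-2mrf}.

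The trajectorial property gives $X_A[t]\perp X_C[t]\mid X_B[t]$, i.e.\ conditional independence given the full boundary \emph{trajectory}. The difficulty is that one cannot simply marginalize this conditioning down to the endpoint $X_B(t)$: the interior endpoint $X_A(t)$ depends on \emph{when} the boundary vertices made their transitions, not only on their time-$t$ states, so in general $X_A(t)\not\perp X_B[t]\mid X_B(t)$, and the routine reduction ``$U\perp W\mid Z$ and $f(U)\perp Z\mid h(Z)$ imply $f(U)\perp g(W)\mid h(Z)$'' is therefore unavailable. This is precisely the point at which Assumption \ref{Ass:A_3} must enter.

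The device I would introduce is a trajectorial $2$-MRF \emph{up to a family of stopping times}. By acyclicity each coordinate $X_w(\cdot)$ is a monotone path through the finite DAG $G_\r$ and makes at most $|\stateS|$ jumps, so the transition times of the boundary vertices form a finite, well-defined family of stopping times $\tau=(\tau_w)_{w\in B}$, and once a boundary vertex has reached its time-$t$ state, monotonicity forbids it from ever reverting. I would prove that, when the boundary process is observed along these stopping times, the analogue of Theorem \ref{thm:regular-2mrf} still holds — conditioning on the stopped boundary data decouples the two sides — and then use monotonicity to show that, given the endpoint states $X_B(t)$, the residual timing information $\tau$ is conditionally independent across $B_1$ and $B_2$. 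Heuristically, the double thickness of $B$ ensures the left system couples to the right only through $X_{B_1}[t]$ and the right to the left only through $X_{B_2}[t]$, and acyclicity guarantees that each such boundary trajectory is determined, up to conditionally-independent internal clocks, by its own endpoint state and the input from the opposite layer. Combining the stopped $2$-MRF with this decoupling and integrating out $\tau$ would yield $X_{A'}(t)\perp X_{C'}(t)\mid X_B(t)$. The base case $t=0$ is exactly Assumption \ref{Ass:B_1}(1), and unimodularity (Assumption \ref{Ass:B_1}(2)) is used to handle the root-versus-generic-vertex symmetry on the tree.

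The main obstacle I anticipate is Step~3: establishing that the boundary transition times on the two sides of the separator are conditionally independent given the double-boundary endpoint states. Both ingredients are essential here — the \emph{two} layers of boundary, so that neither side sees more of the other than a single boundary trajectory, and the \emph{acyclic/monotone} transition structure, so that a boundary vertex's endpoint state rather than its entire history suffices to encode its future influence. Making the stopping-time $2$-MRF rigorous — in particular checking that the optional-time conditioning is compatible with the Poisson-clock representation of \eqref{eq:IPS_SDE} and with the permutation-invariance of the rates — is where I expect the bulk of the work to lie, and is presumably the content of Section \ref{sec:proof_2-MRF(t)}.
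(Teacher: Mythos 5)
Your overall architecture is close to the paper's: you correctly see that the deterministic-time trajectorial $2$-MRF cannot simply be marginalized down to endpoint conditioning, that a stopping-time version of the trajectorial $2$-MRF is the right intermediate tool (this is the paper's Proposition \ref{prop:2MRF_pathwise_stoptime}), and that Assumption \ref{Ass:A_3} enters through the finiteness of the number of boundary transitions. However, your Step 3 --- that given the endpoint states $X_{\partial^2 A}(t)$, the boundary transition times are conditionally independent across the two layers $B_1$ and $B_2$ --- is a genuine gap, and as stated it is false. Take SI-type spread on a path: with $A=\{v\}$, the vertex $v-1\in B_1$ is adjacent to $v-2\in B_2$, so the jump rate of $v-1$ depends on the running state of $v-2$; conditionally on both being infected at time $t$, their infection times remain strongly correlated. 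Monotonicity does not rescue this, and in fact the paper's Remark \ref{rem_counterexample} is a warning that conditioning on endpoints versus trajectories genuinely differs for these dynamics. A secondary problem is that even granting Step 3, reconstructing $X_{B_1}[t]$ from $(X_{B_1}(t),\tau_{B_1})$ requires that the endpoint determine the visited state sequence, which fails when the transition DAG admits several paths between two states.

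The paper sidesteps the decoupling issue entirely by a different localization: let $\tauone$ be the \emph{first} jump time on $\partial^2 A$. On $[0,\tauone]$ the boundary trajectory is constant, so conditioning on $X_{\partial^2 A}(t)$ carries exactly the same information as conditioning on $X_{\partial^2 A}[t]$, and the stopping-time trajectorial $2$-MRF immediately yields the time-marginal statement on that interval --- no claim about timing independence is needed. To handle the instant $\tauone$ itself the paper augments the state with the pre-jump value, $Y_v=(X_v,X_v^-)$, and uses a no-simultaneous-jumps lemma (Lemma \ref{lem:simplicity_PP}) plus a measurable-update representation of the boundary jump. The argument is then restarted from $\tauone$ and iterated over the successive boundary jump times $\sigma_1<\sigma_2<\cdots$; Assumption \ref{Ass:A_3} guarantees each of the finitely many vertices of $\partial^2 A$ jumps only finitely often, so the iteration exhausts $[0,\infty)$. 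If you replace your Step 3 with this first-boundary-jump localization and restart scheme, the rest of your outline goes through.
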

The proof of Theorem \ref{thm:2MRF_Markov} is given in Section \ref{subsec:proof_2-MRF(t)_from_[tau]_to_(t)}. It relies on combining a 2-MRF property for trajectories up to stopping times (Proposition \ref{prop:2MRF_pathwise_stoptime}, proved in Section \ref{subsec:proof_2-MRF(t)_2-MRF[tau]}) with structural properties of the graph, analyzed in Section \ref{subsec:proof_2-MRF(t)_from_[tau]_to_(t)}.

\subsection{Markovian projection} 
\label{subsec:proof_outline_mimicking}

At its heart, the tractable description of the IPS comes from a Markov approximation of the local field equations \eqref{eq:lf-SDE}. The goal of this section is to state an application of a Markovian projection result for jump processes to the local-field equation that we will require in order to prove the main theorem. By Markovian projection, we mean here a Markov process with the same marginal laws as the process of interest. The terms mimicking and filtering can also be found in the literature. For consistency purposes, we will refer to mimicking when the Markovian projection has the same marginal laws, and to filtering when a stronger, pathwise equality of laws takes place. This question has accrued considerable interest in the literature, stemming from the work of Gy\"ongy \cite{gyongy1986mimicking} who constructed a Markovian projection for diffusive processes. This work was extended by Brunick and Shreve \cite{BrunickShreve2013} by relaxing assumptions down to integrability conditions. As far as the authors are aware, there are no works in the literature discussing Markovian projections for pure-jump processes. However, some more general results on semimartingales with jumps exist. Bentata and Cont \cite{bentata2009mimicking} prove a Markovian projection result in such a setting at the cost of regularity conditions on the coefficients which are not satisfied in the models we are considering. K\"opfer and R\"uschendorf \cite{kopfer2023markov} provide an alternative construction to Bentata and Cont using assumptions that are also hard to check. Larsson and Long \cite{larsson2024markovian} construct a Markovian projection for semimartingales with jumps in a weaker sense than that of the previous works, namely that the projection is obtained as the solution to a Markovian martingale problem with very mild boundedness and integrability assumptions, but the uniqueness and Markov property of this mimicking process are not guaranteed. In this work, we show that the assumptions for the existence of a solution to the Markovian martingale problem are satisfied in our case. We then derive the uniqueness of the solution by considering the SDE that the martingale problem is equivalent to, and showing by other arguments that this SDE admits a unique solution. In Section \ref{sec:Markov_proj_proof}, we state a general mimicking result for pure jump IPS with potentially trajectory-dependent jump rates.
Here, we state an instanciation of this general mimicking result to derive the Markovian projection of the local-field equations from Definition \ref{def:lfe}.

\begin{theorem}\label{thm:markov_proj}
Let $\Xlf$ be the local-field process given by \eqref{eq:lf-SDE} with rates $(\rlf^j_v)$ given by \eqref{eq:lf-rates}
. Let $\Xmp$ be an IPS on $G=\V^\offspring_1$ with rates $\rmp^j_{v}:[0,\infty)\times \stateS_\extra^{\V_1^\offspring}\rightarrow[0,\infty)$ given by
\begin{equation}\label{eq:markov_proj_rate}
    \rmp^j_{v}(t,x)=\E[\rlf^j_{v}(t,\YLE) | \YLE(t-)=x],
\end{equation} $j\in\Jmc$, $v\in\natzero$, that is, $\Xmp$ is the solution of \eqref{eq:IPS_SDE} with the rates $\r_v^j$ replaced by $\rmp_v^j$. Then for every $t\geq0$,
\begin{equation*}
\Xmp(t)\overset{d}{=} \YLE(t).        
\end{equation*}
\end{theorem}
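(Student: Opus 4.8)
The plan is to follow the classical Gy\"ongy-type mimicking strategy: exhibit a single time-inhomogeneous Markov generator whose associated forward (Kolmogorov) equation is solved \emph{both} by the marginal flow $t\mapsto\law(\YLE(t))$ of the local-field process and by the marginal flow $t\mapsto\law(\Xmp(t))$ of the projected chain, and then invoke uniqueness of that forward equation. Since $\V_1^\offspring$ is a finite vertex set and $\stateS$ is finite, the relevant configuration space $C^\offspring\subset\stateS_\extra^{\V_1^\offspring}$ is finite, so the candidate generator is the bounded time-dependent operator
$$\Gen_t f(x):=\sum_{j\in\Jmc}\sum_{v\in\V_1^\offspring}\rmp^j_v(t,x)\bigl(f(x+j\ev)-f(x)\bigr),$$
acting on functions $f:\stateS_\extra^{\V_1^\offspring}\to\R$, with $\rmp^j_v$ as in \eqref{eq:markov_proj_rate}.

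The key step is to derive the forward equation for $\YLE$. Applying $f$ to the jump SDE \eqref{eq:lf-SDE} and taking expectations after compensating the Poisson random measures $\Nlf^j_v$, I would obtain
$$\E[f(\YLE(t))]=\E[f(\YLE(0))]+\E\Bigl[\int_0^t\sum_{j,v}\bigl(f(\YLE(s-)+j\ev)-f(\YLE(s-))\bigr)\,\rlf^j_v(s,\YLE)\,ds\Bigr].$$
Because the increment factor $f(\YLE(s-)+j\ev)-f(\YLE(s-))$ is $\sigma(\YLE(s-))$-measurable while $\rlf^j_v(s,\YLE)$ is $\sigma(\YLE[s))$-measurable, conditioning on $\YLE(s-)$ and using the tower property replaces $\rlf^j_v(s,\YLE)$ by \emph{exactly} its projection $\rmp^j_v(s,\YLE(s-))$ defined in \eqref{eq:markov_proj_rate}. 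This shows that $\mu_t:=\law(\YLE(t))$ satisfies $\langle f,\mu_t\rangle=\langle f,\mu_0\rangle+\int_0^t\langle\Gen_s f,\mu_s\rangle\,ds$ for every $f$ (using that $\YLE(s-)=\YLE(s)$ for a.e.\ $s$). The same identity for $\nu_t:=\law(\Xmp(t))$ is Dynkin's formula for the Markov chain generated by $\Gen_t$, and the two flows share the initial datum $\mu_0=\nu_0$ since $\Xmp(0)\deq\YLE(0)$.

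It then remains to prove uniqueness of this forward equation. On the finite space $C^\offspring$ it is a linear ODE $\dot\rho_t=\Gen_t^{\ast}\rho_t$ whose coefficients are bounded: Assumption \ref{Ass:A_2} caps $\r^j$ by $C(\dmax+1,t)$, and a conditional expectation of a bounded quantity inherits the same bound, so the $\rmp^j_v$ are uniformly bounded in $x$. A Gronwall/Carath\'eodory argument then yields a unique absolutely continuous solution for the prescribed initial condition, whence $\mu_t=\nu_t$ and the claim follows. The main obstacle I anticipate is not this finite-dimensional uniqueness per se, but the well-posedness of the projected dynamics $\Xmp$ itself: the rates $\rmp^j_v(t,x)$ are defined only through conditional expectations and may fail the c\`agl\`ad regularity in time required by Assumption \ref{Ass:A_2}, so Proposition \ref{prop:SDE-wellposed} cannot be quoted verbatim. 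I would therefore first establish existence of a solution to the martingale problem for $\Gen_t$ (in the spirit of \cite{larsson2024markovian}), and then recover uniqueness by reformulating it as the jump SDE \eqref{eq:IPS_SDE} with rates $\rmp^j_v$ and arguing pathwise uniqueness directly from the boundedness of the rates and the finiteness of the state space, rather than from any continuity of the coefficients.
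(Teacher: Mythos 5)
Your argument is correct, but it reaches the conclusion by a genuinely different route than the paper. The paper proves Theorem \ref{thm:markov_proj} as an instance of a general Markovian projection result for pure jump processes (Theorem \ref{thm:Markov_proj_for_hist_IPS}): existence of a mimicking process is obtained by verifying the hypotheses of Larsson and Long's martingale-problem-based mimicking theorem \cite{larsson2024markovian}, uniqueness is then recovered by passing from the martingale problem to the SDE \eqref{eq:Markov_proj_SDE} via Kurtz's equivalence \cite{Kurtz2011} and invoking well-posedness of that SDE, and the Feller/Markov property is deduced from \cite{xi2019jump}. You instead prove equality of marginals directly: the compensation formula plus the tower property shows that $t\mapsto\law(\YLE(t))$ solves the forward equation for the projected generator, the same computation applied to \eqref{eq:IPS_SDE} with rates $\rmp^j_v$ shows that $t\mapsto\law(\Xmp(t))$ solves it too, and uniqueness is a finite-dimensional linear ODE/Gronwall argument. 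Your route is more elementary and self-contained, exploiting the finiteness of $C^\offspring$ to avoid the semimartingale machinery for the core identity, and it makes transparent exactly where definition \eqref{eq:markov_proj_rate} enters (as the tower-property projection of the predictable intensity onto $\sigma(\YLE(s-))$). What it does not deliver by itself is the existence, uniqueness and Feller property of $\Xmp$, which the paper's packaging provides and which are reused later (e.g.\ in the proof of Theorem \ref{thm:main}, where the rates of $\Xmp$ are matched against those of the Markov local-field equation); you correctly identify this as the remaining obstacle, and your proposed fix --- existence via the martingale problem in the spirit of \cite{larsson2024markovian}, uniqueness pathwise from boundedness of the rates on a finite state space --- is essentially what the paper does in Section \ref{sec:Markov_proj_proof}. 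Two minor points to record if you write this up: the rates $\rmp^j_v(t,x)$ are only pinned down for $x$ in the support of $\law(\YLE(t-))$ (choose a version bounded by the same constant elsewhere; the Gronwall argument is insensitive to this choice), and one should verify joint measurability of $(t,x)\mapsto\rmp^j_v(t,x)$, which on a finite state space follows from writing the conditional expectation as a ratio of measurable functions of $t$.
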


Theorem \ref{thm:markov_proj}  provides a characterization of the time-marginals of $X_{\V_1^\offspring}$ as the solution of SDE \eqref{def:IPS} with rates $\rmf_v^j$ given by \eqref{eq:markov_proj_rate}. However, the rates $\gamma_v^j(t,\cdot)$ depend on the law of the time-paths $\law(X_{\V_1^\offspring}[t))$, and therefore Theorem \ref{thm:markov_proj} does not provide an autonomous characterization of  $\law(X_{\V_1^\offspring}(t))$.
In the next section, we combine Theorem \ref{thm:markov_proj} and the time-marginal 2-MRF property established in Theorem \ref{thm:2MRF_Markov} to obtain a tractable characterization of the time-marginals of $X^{\Tmc}$ as the solution of an SDE that depends only on $X_{\V_1^\offspring}(t)$ and $\law(X_{\V_1^\offspring}(t))$, and is therefore autonomous.

\subsection{Obtaining the ODE}
\label{subsec:proof_outline_wrapup} 
In this section, we introduce the Markov IPS corresponding to the LFE and show how its forward Kolmogorov equations characterize the law of the space-time marginals of the LFE \eqref{eq:lf-SDE}.

\begin{definition}[Markov local-field equations]\label{def:M-LFE}
Let $\Nmf=\{\Nmf_v^j\}_{j\in \Jmc, v\in\V_1^\offspring}$ be a collection of independent Poisson processes on $\R_+\times\R_+$ with 
unit intensity, $\Xmfinitial$ be a random element in $\stateS_\extra^{\V^\offspring_1}$ such that $\Xmfinitial\deq X_{\V^\offspring_1}(0)$, and set $D=\sum_{v\in\V^\offspring_1} \one_{\{\Xmfinitial_v \neq \extra\}}$. We define a stochastic process $\Xmf$ on $\stateS_\extra^{\V^\offspring_1}$ by the following SDE:

    \begin{equation}\label{eq:MLF-SDE}
        \Xmf_v(t)= \Xmfinitial_v+\sum_{j \in \Jmc}\int_{(0,t)\times \R_+} j  \one_{\{u<\rmf_v^{j}(s,\Xmf(s-)\}} \mathbf{\Nlf}^j_v(\diff s,\diff u),
    \end{equation}
    with
    \begin{equation}\label{eq:mf-rate}
        \rmf_v^j(t,y):= \begin{cases}
            \r^j(t,y_\root,y_{\partial_\root}) & \text{ if } v=\root,
            \\ \displaystyle \frac{\E[D\r^j(t,\Xmf_\root(t-),\Xmf_{\partial_\root}(t-))| \Xmf_\root(t-)=y_v , \Xmf_1(t-)=y_\root) ]}{\E[D|  \Xmf_\root(t-)=y_v , \Xmf_1(t-)=y_\root) ] }& \text{ otherwise}.
        \end{cases}
    \end{equation}
     We refer to equations \eqref{eq:MLF-SDE}-\eqref{eq:mf-rate} as the \textit{Markov local-field equation}, or MLFE for short.
\end{definition}
\begin{proposition}\label{prop:MLFE-wellposed}
    Under Assumptions \ref{Ass:A_1}, \ref{Ass:B_1} and \ref{Ass:A_2}, the Markov local-field equation \eqref{eq:MLF-SDE} is well-posed.
\end{proposition}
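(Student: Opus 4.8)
The plan is to read \eqref{eq:MLF-SDE}--\eqref{eq:mf-rate} as a distribution-dependent (McKean--Vlasov) pure-jump SDE on the \emph{finite} state space $E:=\stateS_\extra^{\V_1^\offspring}$, and to exploit that the rates $\rmf_v^j$ depend on $\law(\Xmf)$ only through the one-time marginal $\mu_t:=\law(\Xmf(t))$. Well-posedness then splits into two tasks: (a) existence and uniqueness of the marginal flow $(\mu_t)_{t\ge0}$ solving the closed nonlinear Kolmogorov forward equation on $\Pmc(E)$ that these rates generate; and (b) once this flow, and hence the (now time-dependent but law-free) rates $\rmf_v^j[\mu_t](t,\cdot)$, are frozen, pathwise well-posedness of the resulting bounded-rate, finite-state jump SDE. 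Task (b) is classical and can be handled by the martingale-problem construction of Section \ref{sec:Markov_proj_proof}, so the entire difficulty lies in (a).

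First I would write out the forward vector field. Its $y$-entry is a signed sum of \emph{fluxes} $\mu(y)\,\rmf_v^j[\mu](t,y)$ and $\mu(y-j\ev)\,\rmf_v^j[\mu](t,y-j\ev)$, where for $v\neq\root$ one has $\rmf_v^j[\mu](t,y)=N_y^j(t,\mu)/D_y(\mu)$ with the linear functionals $N_y^j(t,\mu):=\sum_{z:\,z_\root=y_v,\,z_1=y_\root}D(z)\,\r^j(t,z_\root,z_{\partial_\root})\,\mu(z)$ and $D_y(\mu):=\sum_{z:\,z_\root=y_v,\,z_1=y_\root}D(z)\,\mu(z)$, and $D(z)$ denotes the number of non-$\extra$ coordinates of $z$. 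By Assumption \ref{Ass:A_2} the rates are uniformly bounded, $\r^j(t,\cdot)\le C(\dmax+1,\infty)=:\kbar<\infty$, whence $N_y^j\le\kbar\,D_y$, so $\rmf_v^j\le\kbar$ (with the convention $0/0=0$) and each flux is $\le\kbar\,\mu(y)$. To obtain a unique $C^1$ flow by Picard--Lindel\"of, it suffices to prove that this vector field is Lipschitz on the relevant domain.

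\textbf{Main obstacle.} I expect the crux to be exactly that $\rmf_v^j[\mu]=N_y^j/D_y$ is a ratio of linear functionals whose denominator $D_y(\mu)$ can vanish; the convention $0/0=0$ then renders the \emph{rate} discontinuous in $\mu$ and non-Lipschitz. Two structural facts rescue the \emph{flux}. The first is that the ``swapped'' fibre $\{z:z_\root=y_v,\,z_1=y_\root\}$ does not contain $y$ itself (unless $y_\root=y_1=y_v$, a degenerate case treated identically), so $N_y^j$ and $D_y$ do not depend on the coordinate $\mu(y)$ and the prefactor $\mu(y)$ factors out cleanly. The second, and decisive, fact is a denominator lower bound available on the invariant subclass $\Pmc_{\mathrm{sym}}\subset\Pmc(E)$ of marginals that are exchangeable in the neighbour coordinates and unimodular in the sense of Definition \ref{def-unimodular}: this subclass contains $\mu_0=\law(X_{\V_1^\offspring}(0))$ by Assumption \ref{Ass:B_1} and is preserved by the flow because the root rate $\r^j$ is permutation-invariant in the neighbour states (Definition \ref{def:IPS}) and the non-root rates are defined symmetrically, so every solution flow stays in $\Pmc_{\mathrm{sym}}$. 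On $\Pmc_{\mathrm{sym}}$ the mass-transport principle \eqref{eq:unimodularity} together with neighbour-exchangeability shows that the degree-weighted pair law $(z_\root,z_1)$ is symmetric, which yields $D_y(\mu)\ge c\,\mu(y)$ for a constant $c=c(\offspring)>0$ whenever $y$ has positive degree --- the only case in which $\rmf_v^j(t,y)$ is non-trivial.

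Combining the two facts bounds every partial derivative of the flux $\mu(y)\,N_y^j/D_y$ on $\Pmc_{\mathrm{sym}}$: differentiating in $\mu(y)$ gives $N_y^j/D_y\le\kbar$, while differentiating in a fibre coordinate $\mu(z)$ produces a factor $\mu(y)/D_y\le 1/c$ that cancels the $1/D_y$ singularity. Hence the forward vector field is Lipschitz on $\Pmc_{\mathrm{sym}}$, and Picard--Lindel\"of yields a unique flow $(\mu_t)\subset\Pmc_{\mathrm{sym}}$. Freezing the rates along this flow, the linear bounded-rate finite-state jump SDE from task (b) is pathwise unique, and its marginals solve the same forward equation with initial datum $\mu_0$, so by the uniqueness just established they coincide with $(\mu_t)$; this self-consistency exhibits the frozen process as a genuine solution of the MLFE and, together with uniqueness of the flow, gives uniqueness in law. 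Since only Assumptions \ref{Ass:A_1}, \ref{Ass:B_1} and \ref{Ass:A_2} enter --- the acyclicity Assumption \ref{Ass:A_3} is never used --- this matches the hypotheses of the statement.
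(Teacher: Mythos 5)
Your proposal is correct in its essential strategy but takes a genuinely different route from the paper's. The paper disposes of Proposition \ref{prop:MLFE-wellposed} in one line: the MLFE has the same structure as the LFE of Definition \ref{def:lfe} --- a jump SDE with bounded, measurable, law-dependent rates --- so the well-posedness argument for the LFE from \cite{GanRamUGW} applies verbatim, and no use is made of the fact that the law-dependence is only through the time-marginal. You instead exploit precisely that Markovian structure, splitting the problem into (a) uniqueness of the marginal flow via a Lipschitz estimate for the nonlinear Kolmogorov forward field on the finite simplex, and (b) pathwise well-posedness of the frozen linear jump SDE, glued together by the standard self-consistency argument. Your step (a) is, in substance, the paper's separate Proposition \ref{prop:ODE-wellposed} (Section \ref{sec:ode_well-posed}): the quotient-rule computation, the bound $N\le \kbar D$, and the cancellation of the $1/D$ singularity by the prefactor $\mu(y)$ all appear there. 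What your route buys is a self-contained, finite-dimensional argument that simultaneously yields the ODE characterization; what the paper's route buys is brevity and independence from the denominator issue, since the LFE argument treats the conditional-expectation rates as bounded measurable functionals defined a.e.\ and never needs a lower bound on $D_y$.

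The one step you assert without proof is the decisive one for your route: that the exchangeable, edge-reversal-symmetric class $\Pmc_{\mathrm{sym}}$ is invariant under the forward flow, which is what delivers $D_y(\mu)\ge c\,\mu(y)$. Running Picard--Lindel\"{o}f inside $\Pmc_{\mathrm{sym}}$ requires a tangency computation showing the vector field maps $\Pmc_{\mathrm{sym}}$ into its tangent space; permutation-invariance of the root rate gives exchangeability but the degree-weighted edge-reversal symmetry needs its own verification. This is the same point on which the paper's proof of Proposition \ref{prop:ODE-wellposed} is terse: the inequality $\delta(q)\le D$ there compares the mass of the fibre $\{x_\root=a_0,\,x_1=a_1\}$ with the degree-weighted mass of the swapped fibre $\{c_\root=a_1,\,c_1=a_0\}$, which fails for general $q\in\Pmc^\offspring$ and genuinely requires the symmetry you invoke. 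You have correctly identified the crux; to make the argument complete you would need to spell out the invariance of $\Pmc_{\mathrm{sym}}$ rather than deduce it from permutation-invariance of the rates alone.
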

\begin{proof}
The well-posedness stems from considerations identical to the well-posedness of the local-field equation from Definition \ref{def:lfe}, for which we refer to \cite{GanRamUGW}. 
\end{proof}

We now conclude this section with the proof of the main result, Theorem \ref{thm:main}.
\begin{proof}[Proof of Theorem \ref{thm:main}] 
By Theorem \ref{thm:LFE}, $\law(X_{\V_1^\offspring})=\law(\Xlf)$, where $\Xlf$ is the solution of the local-field equations \eqref{eq:lf-SDE}. Therefore, characterizing the time marginals of $X_{\V_1^\offspring}$  is equivalent to characterizing the time marginals of $\Xlf$. 
By Theorem \ref{thm:markov_proj}, there exists a Markov  process $\Xmp$ such that $\law(\Xmp(t))=\law(\Xlf(t))$ for all $t\geq 0$, and $\Xmp$ solves the SDE \eqref{eq:Markov_proj_SDE} with rates 
\begin{equation}
    \label{eq:rmp}
    \rmp^j_{v}(t,x)=\E[\rlf^j_{v}(t,\YLE) | \YLE(t-)=x].
\end{equation}

We next use the 2-MRF property proved in Theorem \ref{thm:2MRF_Markov} to show that $\law(\Xmp)=\law(\Xmf)$, where $\Xmf$ is the solution to the Markov local-field defined in \eqref{eq:MLF-SDE}. By definition, the initial states $\Xmf(0)$ and $\Xmp(0)$ have the same law. By well-posedness of the Markov local-field equations given in Proposition \ref{prop:MLFE-wellposed}, it is enough to show that the rates $\rmp=\{\rmp_v^j\}_{v\in \V_1^\offspring, j\in\Jmc}$ and $\rmf=\{\rmf_v^j\}_{v\in \V_1^\offspring, j\in\Jmc}$ coincide, where $\rmf$ are the rates of $\Xmf$ defined in \eqref{eq:mf-rate}. For all $j\in\Jmc$ and $x\in\stateS_\extra^{\V_1^\offspring}$, by definition of the rates $\rlf_\root^j$ given in \eqref{eq:lf-rates},

\begin{equation*}
     \rmp^j_\root(t,x)=\E[\rlf_\root^{j}(t,\YLE) \ |\  \YLE(t-)=x]=
         \E[\r^j(t,\YLE_\root(t-), \YLE_{\partial_\root}(t-)) | \YLE(t-)=x].
\end{equation*}
Therefore, using \eqref{eq:mf-rate} in the second equality, we have
     \begin{align}
        \begin{split}
            \E[\r^j(t,\YLE_\root(t-), \YLE_{\partial_\root}(t-)) | \YLE(t-)=x]
             &= \r^j(t,x_\root, x_{\partial_\root}) \\& =\rmf_\root^{j}(t,x),
        \end{split}
    \end{align}
and thus $\rmp_\root^j(t,x)=\rmf_\root^{j}(t,x)$.

Now consider the neighbors of the root, and  let $v\in \partial_\root$. Since by Theorem \ref{thm:LFE}, $\law(\Xlf)=\law(X_{\V_1^\offspring})$, recalling the definition \eqref{eq:lf-rates} of $\rlf$,  for $y\in\Dmc^{\V_1^\offspring}$, we have
\begin{align*}
    \rlf_v^j(t,y) &= 
     \frac{\E[D\r^j(t,\Xlf_{\root}(t-),\Xlf_{\partial_\root}(t-))| \Xlf_\root[t)=y_v[t) , \Xlf_1[t)=x_\root[t)] }{\E[D|  \Xlf_\root[t)=y_v[t) , \Xlf_1[t)=y_\root[t)] }
     \\ &= \frac{\E[D\r^j(t,X_{\root},X_{\partial_\root}(t-)| X_\root[t)=y_v[t) , X_1[t)=y_\root[t)] }{\E[D|  X_\root[t)=y_v[t) , \Xlf_1[t)=y_\root[t)] }.
\end{align*}
Using unimodularity, analogously to the proof of \cite{lacker2023marginal}*{Proposition 3.18},
it follows that 
\begin{equation}\label{eq:rate_after_unimod}
    \rlf_v^j(t,y)=\E[\r^j(t,X_{v}(t-),X_{\partial_v}(t-)| X_\root[t)=y_\root[t) , X_v[t)=y_v[t)].
\end{equation}
Using the fact that $\law(\Xlf)=\law(X_{\V_1^\offspring})$ and combining \eqref{eq:rate_after_unimod} with \eqref{eq:rmp}, we have 
\begin{align}\label{eq:main_proof_1}
    \begin{split}
        \rmp_v^j(t,y(t-)) &= \E[\rlf^j_{v}(t,\YLE)\ |\ \YLE(t-)= y(t-)] 
        \\& = \E[\rlf^j_{v}(t,X_{\cl_\root})\ |\ X_{\cl_\root}(t-)=y(t-)].  
        \\& = \E [ \E[\r^j(t,X_{v}(t-),X_{\partial_v}(t-)| X_\root[t)=y_\root[t) , X_v[t)=y_v[t)] |\ X_{\cl_\root}(t-)= y(t-)].
    \end{split}
\end{align}

We now proceed to use the 2-MRF properties for trajectories (Theorem \ref{thm:regular-2mrf}) and time marginals (Theorem \ref{thm:2MRF_Markov}) to simplify the last equation. For that purpose, let $A\subset \V$ be obtained from $\V$ by removing the root and the subtree rooted at $v$, that is, $A=\V \setminus (\{\root\}\cup\{vw : w\in \V\}$, where we recall that $vw$ denotes concatenation with the convention $v\root=v$. Note that $\partial^2 A=\{v,\root\}$. Also, let $B=\partial_\root\setminus\{v\}$, which satisfies $B\subset \V\setminus A$.
By Theorem \ref{thm:regular-2mrf},
\begin{equation}\label{eq:main_proof_2}
    \E[\r^j(t,X_{v}(t-),X_{\partial_v}(t-)| X_{\root,v}[t)]=\E[\r^j(t,X_{v}(t-),X_{\partial_v}(t-)| X_{\cl_\root}[t)].
\end{equation}
Combining \eqref{eq:main_proof_1} and \eqref{eq:main_proof_2}, and applying the tower property of conditional expectations, 

\begin{align}
    \begin{split}
    \rmp_v^j(t,y(t-))=&\E [ \E[\r^j(t,X_{v}(t-),X_{\partial_v}(t-)| X_{\cl_\root}[t)=y[t)] |\ X_{\cl_\root}(t-)= y(t-)] 
        \\=& \E[ \r^j(t,X_{v}(t-),X_{\partial_v}(t-) | \ X_{\cl_\root}(t-)= y(t-)] 
    \end{split}
\end{align}

Finally, we invoke the 2-MRF property for time marginals (Theorem \ref{thm:2MRF_Markov}) with the same choice of $A$ and $B$ as above to conclude that
    \begin{equation*}
    \E[ \r^j(t,X_v(t-),X_{\partial_v}(t-)) \ |\ X_{\cl_\root}(t-)=y(t-)]  = \E[ \r^j(t,X_v(t-),X_{\partial_v}(t-)) \ |\ X_{\root,v}(t-)=y_{\root,v}(t-)].
    \end{equation*}
The latter is equal to $\rmf_v^j(t,y(t-))$ by definition, see \eqref{eq:mf-rate}, and therefore, $\rmp_v^j(t,x)=\rmf_v^j(t,x)$ for all $t\in[0,\infty)$ and $x\in\stateS_\extra^{\V_1^\offspring}$. This establishes that for every $t\in[0,\infty)$, $\law(X_{\V_1^\offspring}(t))=\law(\Xmf(t))$. 
Equations \eqref{eq:MF-ODE-UGW}-\eqref{eq:MF-ODE-UGW_initial} are a transcription of the forward Kolmogorov equations of $\Xmf$.
\end{proof}

\begin{remark}
    For a full derivation of the forward Kolmogorov equations of the Markov local-field process $\Xmf$ in the special case of $\offspring=\delta_k$ for $k\in\natzero\setminus\{0,1\}$, where the UGW coincides with the $k$-regular tree, we refer the reader to \cite{Gangulythesis}*{Section 10.2}.
\end{remark}

\begin{remark}
\label{rem_counterexample}
Note that Theorem \ref{thm:main} is a result about the marginal laws of the dynamics: it states that the space-time marginals of the LFE \eqref{eq:lf-SDE} and the MLFE \eqref{eq:MLF-SDE} coincide. A natural question to ask is then the validity of a stronger result, namely the equality of the rates in the two SDEs. The answer to that question in all generality is negative. Consider the seizure propagation dynamics from Example \ref{subsubsec:neural_SIR} on the graph $G=\Z$. Using the notation introduced therein,
consider the case where $\{\Xmf_{\root,1}(t)=(1,1)\}$. Let $\tau^1_{\root,1}=\inf \{s\geq 0, \Xmf_{\root,1}(s)=1\}.$ Suppose $\tau^1_\root<\tau^1_1.$ In that case, $\Xmf_{-1}(\tau^1_\root)=1$, since by monotonicity, $\Xmf_{1}(\tau^1_\root)=0.$ Therefore, $\P(\Xmf_{-1}(t)=0)=0.$ This in turn implies that $\E[\Xmf_{-1}(t)|\Xmf_{\root,1}(t)=(1,1)]\neq \E[\Xmf_{-1}(t)|\Xmf_{\root,1}[t]=(1,1)].$
\end{remark}

\section{Proof of the 2-MRF property for time marginals}
\label{sec:proof_2-MRF(t)}
\subsection{Pathwise 2-MRF property up to stopping times}
\label{subsec:proof_2-MRF(t)_2-MRF[tau]}
In this section we establish a  second-order MRF property for the trajectories of $X$ up to a class of stopping times. We start by recalling some elementary definitions.

\newcommand{\sigmaalgebra}{\Smc}
\begin{definition}\label{def:stopping_time}
Let $(\Omega,\sigmaalgebra,\P)$ be a probability space. 
\begin{enumerate}
    \item A random  variable $\tau: \Omega \rightarrow [0,\infty)$ is said to be a \textit{stopping time} with respect to a filtration $\Fmc=\{\Fmc_t\}_{t\geq 0}$ or equivalently an $\Fmc$-stopping time, whenever $\{\tau\leq t\}\in\Fmc_t$ for all $t\geq 0$.
    \item Given a filtration $\Fmc=\{\Fmc_t\}_{t\geq 0}$ on $(\Omega,\sigmaalgebra,\P)$, the $\sigma$-algebra generated by an $\Fmc$-stopping time $\tau$ is the set \begin{equation}\label{eq:F_tau}
        \Fmc_\tau:=\{A\in\sigmaalgebra \ : A\cap\{\tau\leq t\}\in\Fmc_t, \forall\ t\geq 0\}.
    \end{equation}
    \item Let $U=\{U(t)\}_{t\geq 0}$ be a stochastic process on $(\Omega,\sigmaalgebra,\P)$ that takes values in a measurable space. The \textit{natural filtration} generated by $U$, denoted by $\Fmc^U$, is given by \begin{equation}\label{eq:nat_filtration}
        \Fmc^U_t:=\sigma\{U(s) : s\in[0,t]\}, \quad t\geq0.
    \end{equation}
\end{enumerate}
    
\end{definition}

We now state the main result of the section, which generalizes the 2-MRF  up to deterministic times established in \cite{GanRamMRF}, see Theorem \ref{thm:regular-2mrf}.
\begin{proposition}[2-MRF property for stopped trajectories]
\label{prop:2MRF_pathwise_stoptime}
Suppose Assumptions \ref{Ass:A_1}, \ref{Ass:B_1} and \ref{Ass:A_2} hold. Fix $A\subset \V$ with $|\partial^2 A| < \infty$, and let $\tau_A$ be a stopping time with respect to the natural filtration generated by $X_{\partial^2 A}$. Then for every $B\subset \V\setminus (A \cup \partial^2 A)$,
    \begin{equation}
    \label{eq:2MRF_pathwise_stoptime}
        X_A[\tau_A] \perp X_B[\tau_A] | X_{\partial^2 A}[\tau_A].
    \end{equation}
\end{proposition}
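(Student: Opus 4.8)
The plan is to bootstrap the deterministic-time 2-MRF property of Theorem~\ref{thm:regular-2mrf} up to the stopping time $\tau_A$ by approximating $\tau_A$ from above by discrete stopping times and passing to a limit, crucially exploiting the hypothesis that $\tau_A$ is adapted to the natural filtration $\Fmc^{X_{\partial^2 A}}$ of the separating set $X_{\partial^2 A}$. Throughout I realize stopped trajectories as frozen paths, writing $X_A[\tau_A]=(X_A(s\wedge\tau_A))_{s\ge0}\in\Dmc^A$ and analogously for $B$ and $\partial^2 A$; since $\tau_A$ is an $\Fmc^{X_{\partial^2 A}}$-stopping time, it is recoverable from $X_{\partial^2 A}[\tau_A]$, so this realization loses no information. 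I first define the dyadic approximations $\tau^{(n)}:=2^{-n}\lceil 2^n\tau_A\rceil$, which decrease pointwise to $\tau_A$ and, because $\{\tau^{(n)}\le k2^{-n}\}=\{\tau_A\le k2^{-n}\}\in\Fmc^{X_{\partial^2 A}}_{k2^{-n}}$, are $\Fmc^{X_{\partial^2 A}}$-stopping times taking values in the discrete grid $D_n:=\{k2^{-n}:k=0,1,2,\dots\}$. The key consequence is that each event $\{\tau^{(n)}=s\}$, $s\in D_n$, belongs to $\sigma(X_{\partial^2 A}[s])$.

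Next I establish the conditional independence for the discrete stopping times $\tau^{(n)}$. Fixing $n$ and writing $\tau=\tau^{(n)}$, I decompose over the partition $\{\tau=s\}$, $s\in D_n$. On $\{\tau=s\}$ the frozen paths $X_A[\tau]$, $X_B[\tau]$, $X_{\partial^2 A}[\tau]$ coincide with those frozen at the \emph{deterministic} time $s$, and the indicator $\one_{\{\tau=s\}}$ together with any functional of $X_{\partial^2 A}[\tau]$ is $\sigma(X_{\partial^2 A}[s])$-measurable. Applying Theorem~\ref{thm:regular-2mrf} at the deterministic time $s$ --- which, since $B\subset\V\setminus(A\cup\partial^2 A)$, gives $X_A[s]\perp X_B[s]\mid X_{\partial^2 A}[s]$ --- lets me factorize each term of the decomposition. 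Reassembling the terms, and using that the traces of $\sigma(X_{\partial^2 A}[\tau])$ and $\sigma(X_{\partial^2 A}[s])$ on $\{\tau=s\}$ agree, yields
\[
\E\!\left[f(X_A[\tau^{(n)}])\,g(X_B[\tau^{(n)}])\mid X_{\partial^2 A}[\tau^{(n)}]\right]
=\E\!\left[f(X_A[\tau^{(n)}])\mid X_{\partial^2 A}[\tau^{(n)}]\right]\E\!\left[g(X_B[\tau^{(n)}])\mid X_{\partial^2 A}[\tau^{(n)}]\right]
\]
for all bounded measurable $f,g$; equivalently $X_A[\tau^{(n)}]\perp X_B[\tau^{(n)}]\mid X_{\partial^2 A}[\tau^{(n)}]$.

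Finally I pass to the limit $n\to\infty$. Since $\tau^{(n)}\downarrow\tau_A$ and the paths of $X$ are càdlàg, right-continuity gives pointwise, hence Skorokhod, convergence of the frozen paths $X_A[\tau^{(n)}]\to X_A[\tau_A]$ and likewise for $B$ and $\partial^2 A$. Testing the previous display against a bounded continuous functional $h$ of $X_{\partial^2 A}[\tau^{(n)}]$ and integrating, the identity reads
\[
\E\!\left[f(X_A[\tau^{(n)}])\,g(X_B[\tau^{(n)}])\,h(X_{\partial^2 A}[\tau^{(n)}])\right]
=\E\!\left[\Phi_n\,\Gamma_n\,h(X_{\partial^2 A}[\tau^{(n)}])\right],
\]
where $\Phi_n=\E[f(X_A[\tau^{(n)}])\mid\sigma(X_{\partial^2 A}[\tau^{(n)}])]$ and $\Gamma_n=\E[g(X_B[\tau^{(n)}])\mid\sigma(X_{\partial^2 A}[\tau^{(n)}])]$. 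The left-hand side converges by bounded convergence using the path convergence. For the right-hand side I show $\Phi_n\to\E[f(X_A[\tau_A])\mid\sigma(X_{\partial^2 A}[\tau_A])]$ and $\Gamma_n\to\E[g(X_B[\tau_A])\mid\sigma(X_{\partial^2 A}[\tau_A])]$ in $L^1$, combining the path convergence with downward (reverse) martingale convergence, using that $\sigma(X_{\partial^2 A}[\tau^{(n)}])$ decreases in $n$ with $\bigcap_n\sigma(X_{\partial^2 A}[\tau^{(n)}])=\sigma(X_{\partial^2 A}[\tau_A])$; since all factors are bounded, the right-hand side converges to the corresponding product at $\tau_A$. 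This transfers the factorization to $\tau_A$, and a monotone-class argument extends it from continuous to all bounded measurable test functions, yielding \eqref{eq:2MRF_pathwise_stoptime}.

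The discretization step is essentially bookkeeping once one observes that $\{\tau^{(n)}=s\}\in\sigma(X_{\partial^2 A}[s])$, which is precisely where the adaptedness of $\tau_A$ to $\Fmc^{X_{\partial^2 A}}$ is used; this is what allows the stopping time to be absorbed into the conditioning on $X_{\partial^2 A}$ without disturbing the conditional independence of the $A$- and $B$-trajectories. I expect the main obstacle to be the limiting step: one must (i) verify that the frozen stopped paths converge in a topology for which the test functionals are continuous and measure-determining for the relevant laws, accounting for jumps of $X$ at $\tau_A$ via right-continuity, and (ii) justify the interchange of the limit with the conditioning, i.e. the backward convergence of $\E[\,\cdot\mid\sigma(X_{\partial^2 A}[\tau^{(n)}])]$ to $\E[\,\cdot\mid\sigma(X_{\partial^2 A}[\tau_A])]$. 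Point (ii) hinges on identifying the decreasing intersection $\bigcap_n\Fmc^{X_{\partial^2 A}}_{\tau^{(n)}}$ with $\Fmc^{X_{\partial^2 A}}_{\tau_A+}$ and on right-continuity of the (suitably augmented) natural filtration to equate the latter with $\Fmc^{X_{\partial^2 A}}_{\tau_A}$.
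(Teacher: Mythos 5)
Your overall strategy --- establish the factorization for the dyadic discretizations $\tau^{(n)}$ of $\tau_A$ and pass to the limit --- is genuinely different from the paper's argument, which avoids any approximation: the paper proves a general substitution lemma (Lemma \ref{lem:main}) showing that a regular conditional distribution given $\Fmc_\tau$ is obtained by evaluating the deterministic-time regular conditional distributions at $t=\tau(\omega)$, using Shiryaev's identification $\Fmc_\tau=\sigma(U(s\wedge\tau)\,:\,s\ge 0)$, and then simply substitutes the deterministic-time factorization $g_t(U,W)=\gbar_t(U)$ at $t=\tau$ (Theorem \ref{thm:tau_cond_ind}). Your discrete step is correct: the facts that $\{\tau^{(n)}=s\}\in\sigma(X_{\partial^2 A}[s])$, that the traces of the stopped $\sigma$-algebra and of $\sigma(X_{\partial^2 A}[s])$ agree on $\{\tau^{(n)}=s\}$, and the deterministic-time 2-MRF of Theorem \ref{thm:regular-2mrf} do combine to give the factorization at $\tau^{(n)}$.

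However, the limiting step has a genuine gap that you flag but do not close, and it is not mere bookkeeping. Reverse martingale convergence delivers conditional independence of $X_A[\tau_A]$ and $X_B[\tau_A]$ given $\Gmc_\infty:=\bigcap_n\sigma(X_{\partial^2 A}[\tau^{(n)}])$, which (up to the usual identification of stopped $\sigma$-algebras) is $\Fmc^{X_{\partial^2 A}}_{\tau_A+}$. Conditional independence given a larger $\sigma$-field does not imply conditional independence given a smaller one, so your proof is complete only if $\Gmc_\infty=\sigma(X_{\partial^2 A}[\tau_A])$ modulo $\P$-null sets. This is a germ-field triviality, i.e.\ right-continuity of the \emph{raw} natural filtration of $X_{\partial^2 A}$ at $\tau_A$; it is not automatic, because $X_{\partial^2 A}$ by itself is not Markov (so Blumenthal-type arguments do not apply directly) and raw filtrations of c\`adl\`ag processes are not right-continuous in general. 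The identity does hold here --- for instance by a Galmarino-type argument exploiting that $\partial^2 A$ is finite and the rates are bounded, so the first jump of $X_{\partial^2 A}$ strictly after $\tau_A$ occurs at a strictly later time a.s., whence any event in $\Gmc_\infty$ agrees mod null with a $\sigma(X_{\partial^2 A}[\tau_A])$-measurable event --- but this is a substantive additional argument that your proof needs and does not supply. (A smaller point: like the paper's Theorem \ref{thm:tau_cond_ind}, your argument as written requires $\tau_A<\infty$ a.s., since the dyadic grid does not cover the event $\{\tau_A=\infty\}$.)
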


The proof of Proposition \ref{prop:2MRF_pathwise_stoptime} is given at the end of this section. As we will show, it is a corollary of a more general result, Theorem \ref{thm:tau_cond_ind}, established below.
To state the latter, we will need a few definitions and basic lemmas. Before we proceed, let us demonstrate how \eqref{eq:2MRF_pathwise_stoptime} can fail when $\tau_A$ is not a stopping time with respect to the natural filtration generated by $X_{\partial^2 A}$.
\begin{example}
    Let $A\subset \V$ be given by $A=\{1v : v\in \V\setminus\{\root\}\}$, where we recall that $wv$ denotes concatenation. Then $\partial^2A=\{1,\root\}$ and $2\not\in A\cup \partial^2 A$. Let $\stateS=\{0,1,2\}$ and $\Jmc=\{1\}$. Define $\tau:=\inf\{t\geq 0: X_{11}(t)=X_2(t)\}$, with the convention that the infimum of an empty set is $\infty$. Then $\tau$ is a stopping time with respect to the filtration generated by $X$, but not a stopping time with respect to the filtration generated by $X_{\partial^2A}$, since $11\not\in \partial^2A$. Moreover, $2\not\in A\cup \partial^2 A$ and, at time $\tau$, $X_{11}(\tau)=X_{2}(\tau)$, and therefore, setting $B=\{2\}$, $X_A(\tau) \not \perp X_{B}(\tau) | X_{\partial_2}[\tau]$.
\end{example}

We now proceed with the definition of a regular conditional distribution.

\begin{definition}[Regular conditional distribution]\label{def:reg_cond}
   Let $(\Omega,\sigmaalgebra,\P)$ be a probability space, $\Hmc\subset\sigmaalgebra$ a sigma-algebra, and $Y$ a random element that takes values in some measurable space $ (\Ymc,\sigmaalgebra^\Ymc)$. A \textit{regular conditional distribution} of $Y$ given $\Hmc$ is a measurable function $f:\sigmaalgebra^\Ymc\times \Omega\rightarrow[0,1]$ that satisfies the following properties:
   \begin{enumerate}
       \item For every $A\in\sigmaalgebra^\Ymc$, $f(A,\omega)=\E[\one_{\{Y\in A\}} | \Hmc](\omega)$ almost surely.
       \item For almost all $\omega\in\Omega$, $A\rightarrow f( A ,\omega)$ is a probability measure on $(\Ymc, \sigmaalgebra^\Ymc)$.
   \end{enumerate}
\end{definition}
\begin{remark}
Although the existence of a regular conditional distribution is not immediate for general measurable spaces, it is known when $\Ymc$ is a Polish space and $\sigmaalgebra^\Ymc$ is its Borel sigma algebra. See \cite{durrett2019probability}*{Theorem 4.1.17})
\end{remark}
Next, we state a useful result from \cite{shiryaev2011optimal}.

\begin{lemma}[\cite{shiryaev2011optimal}*{Theorem 6}] \label{lem:stopped-sigma-algebra}
    Let $(\Omega,\Fmc,\P)$ be a probability space, and let $U$ be a stochastic process that takes values in a measurable space $(\Umc,\sigmaalgebra^{\Umc})$. Let $\Fmc^U$ be the natural filtration generated by $U$, as defined in \eqref{eq:nat_filtration}. Also, suppose that for all $t\geq 0$ and $\omega\in\Omega$, there exists $\omega'\in\Omega$ such that 
    \begin{equation}\label{eq:space-richness}
        U(s)(\omega')=U(s\wedge t)( \omega)\ \forall s\geq 0.   
    \end{equation}
     Then given any $\Fmc^U$ stopping time $\tau$, its stopped filtration is given by  
    \begin{equation*}
    \Fmc_\tau=\sigma( U(s\wedge \tau), s\geq 0).
    \end{equation*}
\end{lemma}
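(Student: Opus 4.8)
The plan is to prove the two inclusions $\sigma(U(s\wedge\tau):s\ge0)\subseteq\Fmc_\tau$ and $\Fmc_\tau\subseteq\sigma(U(s\wedge\tau):s\ge0)$ separately, working systematically with path functionals. Introduce the path map $\pi(\omega):=(U(s)(\omega))_{s\ge0}$ taking values in $\Umc^{[0,\infty)}$, with coordinate process $Z_s(w):=w(s)$, canonical filtration $\Wmc_t:=\sigma(Z_s:s\le t)$ and $\Wmc:=\sigma(Z_s:s\ge0)$, so that $\Fmc^U_t=\pi^{-1}(\Wmc_t)$ and $\Fmc^U_\infty=\sigma(\pi)=\pi^{-1}(\Wmc)$. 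Two preliminary observations organize everything. First, since $\tau$ is finite-valued and $A\cap\{\tau\le n\}\in\Fmc^U_n\subseteq\Fmc^U_\infty$ for every $n$, any $A\in\Fmc_\tau$ lies in $\Fmc^U_\infty$; thus both $\sigma$-algebras sit inside $\Fmc^U_\infty$, and everything we encounter is a measurable functional of the whole path. Second, the crucial device is the ``freeze at time $t$'' operation: for $\omega\in\Omega$ and $t\ge0$, the richness hypothesis \eqref{eq:space-richness} produces $\omega'$ whose path is the $t$-truncation $U(s)(\omega')=U(s\wedge t)(\omega)$ of $\omega$. The key elementary fact, used repeatedly, is that if the paths of $\omega$ and $\omega'$ agree on $[0,t]$, then $\tau(\omega)=\tau(\omega')$ on $\{\tau\le t\}$ and the two points lie in exactly the same events of $\Fmc^U_t$; this holds because each $\{\tau\le b\}$ with $b\le t$ lies in $\Fmc^U_b$ and is therefore decided by the common restriction to $[0,b]$.

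For the inclusion $\Fmc_\tau\subseteq\sigma(U(s\wedge\tau):s\ge0)$, fix $A\in\Fmc_\tau$ and consider the stopped-path map $F(\omega):=(U(s\wedge\tau(\omega))(\omega))_{s\ge0}$, so that by definition $\sigma(U(s\wedge\tau):s\ge0)=\sigma(F)=F^{-1}(\Wmc)$. Given $\omega$, apply richness with $t=\tau(\omega)$ to obtain $\omega^\ast$ with $\pi(\omega^\ast)=F(\omega)$. Using the key fact with $t=\tau(\omega)$ gives $\tau(\omega^\ast)=\tau(\omega)$, and since $A\cap\{\tau\le\tau(\omega)\}\in\Fmc^U_{\tau(\omega)}$ while both $\omega,\omega^\ast$ lie in $\{\tau\le\tau(\omega)\}$ with paths agreeing on $[0,\tau(\omega)]$, we get $\one_A(\omega)=\one_A(\omega^\ast)$. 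Writing $A=\pi^{-1}(C)$ with $C\in\Wmc$ (possible since $A\in\Fmc^U_\infty=\pi^{-1}(\Wmc)$), the chain $\omega\in F^{-1}(C)\iff\pi(\omega^\ast)\in C\iff\omega^\ast\in A\iff\omega\in A$ shows $A=F^{-1}(C)\in\sigma(F)$, as desired.

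For the reverse inclusion it suffices to place each generator $\{U(s\wedge\tau)\in B\}$, $s\ge0$, $B\in\sigmaalgebra^\Umc$, in $\Fmc_\tau$, i.e. to show $\{U(s\wedge\tau)\in B\}\cap\{\tau\le t\}\in\Fmc^U_t$ for every $t$. Fix $t$ and freeze at $t$ via richness to obtain $\omega'$ with $\pi(\omega')=G_t(\omega):=(U(s\wedge t)(\omega))_{s\ge0}$. On $\{\tau\le t\}$ the key fact gives $\tau(\omega')=\tau(\omega)$, and then $U(s\wedge\tau)(\omega')=U(s\wedge\tau\wedge t)(\omega)=U(s\wedge\tau)(\omega)$ because $\tau\le t$; hence both defining conditions transfer between $\omega$ and $\omega'$, so the event factors through $G_t$. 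Since $\sigma(G_t)=\Fmc^U_t$ (coordinates $U(s\wedge t)$ recover exactly $\{U(s):s\le t\}$), the same membership-transport argument as above writes the event as $G_t^{-1}(D)$ with $D\in\Wmc$, giving $\{U(s\wedge\tau)\in B\}\cap\{\tau\le t\}\in\Fmc^U_t$ and completing the proof.

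The main obstacle, and the precise place where \eqref{eq:space-richness} is indispensable, is the upgrade from ``$\one_A$ is constant on the fibres of the freezing map'' to genuine measurability with respect to it: constancy on fibres alone does not make the factor function measurable. Richness resolves this by realizing every frozen path as the path of an honest sample point $\omega^\ast$ (resp. $\omega'$), which transports set membership through $\pi$ and exhibits the sets explicitly as $F^{-1}(C)$ or $G_t^{-1}(D)$. A secondary technical point is the joint measurability of the evaluation $(a,w)\mapsto w(a)$, needed to see that $\tau$ and $U(\tau)$ are bona fide path functionals (so that the sets $C,D$ above are genuinely in $\Wmc$); this is automatic in the application, where $U=X_{\partial^2A}$ has c\`adl\`ag paths on Skorokhod space, and may be assumed as part of the regularity of $(\Umc,\sigmaalgebra^\Umc)$ in the general statement.
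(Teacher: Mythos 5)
The paper does not actually prove this lemma --- it is cited verbatim from Shiryaev (Theorem~6) --- so there is no in-paper argument to compare against; judged on its own, your proof is a correct, self-contained Galmarino-type argument. The two freezing maps $F$ and $G_t$, together with the observation that path agreement on $[0,t]$ forces agreement of $\tau$ on $\{\tau\le t\}$ and of membership in every event of $\Fmc^U_t$, are exactly the right devices, and the membership-transport step (realizing each frozen path as $\pi(\omega^\ast)$ via \eqref{eq:space-richness}, so that constancy on fibres upgrades to an explicit preimage representation $A=F^{-1}(C)$) is precisely where the richness hypothesis earns its keep; in particular the harder inclusion $\Fmc_\tau\subseteq\sigma(U(s\wedge\tau):s\ge0)$ is handled cleanly. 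The one caveat, which you flag yourself, is real rather than cosmetic: for the reverse inclusion your transport argument needs $\{U(s\wedge\tau)\in B\}$ to lie in $\Fmc^U_\infty=\pi^{-1}(\Wmc)$, and for an arbitrary process on an abstract measurable space the random-time evaluation $w\mapsto w(s\wedge T(w))$ need not be measurable for the product $\sigma$-algebra (it depends on uncountably many coordinates), so the lemma as stated genuinely presupposes some path regularity beyond what is written. In the paper's application $U=X_{\partial^2 A}$ is c\`adl\`ag with a discrete state space on the canonical Skorokhod space, so this is harmless; moreover, in that setting the inclusion $\sigma(U(s\wedge\tau):s\ge0)\subseteq\Fmc_\tau$ also follows directly from progressive measurability of c\`adl\`ag adapted processes, so \eqref{eq:space-richness} is truly needed only for the direction you treat first.
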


We observe that \eqref{eq:space-richness} holds when $\Omega$ is the càdlàg space $\Dmc(\R_+:U)$ and the process $U$ is given by the canonical process $U(\omega)(s)=\omega(s)$, $\forall s\geq 0$, and hence almost surely càdlàg.

We now establish a key identity.
\begin{lemma} \label{lem:main}
    Let $(\Omega, \sigmaalgebra, \P)$ be a probability space, let  $(\Umc,\sigmaalgebra^{\Umc})$ be a measurable space,  and $\Ymc$ a Polish space equipped with the Borel $\sigma$-algebra $\mathfrak{B}(\Ymc)$. Suppose that $U=\{U(t)\}_{t\in\R_+}$ and  $Y=\{Y(t)\}_{t\in\R_+}$ are continuous time stochastic processes with state space $\Umc$ and $\Ymc$ respectively. Assume that, for all $t\geq 0$ and $\omega\in\Omega$, there exists $\omega'\in\Omega$ such that
    \begin{equation}\label{eq:lemma_condition}
        U(s)(\omega')=U(s\wedge t)( \omega) \ \forall s\geq 0.
    \end{equation}
    Moreover, for each $t\in[0,\infty)$,  let $f_t:\mathfrak{B}(\Ymc) \times \Omega $ be a version of the regular conditional distribution of $Y(t)$ given $\Fmc_t^U$, an element of $\Fmc^U$, the natural filtration generated by $U$. Suppose that $\tau$ is an almost surely finite $\Fmc^U$-stopping time, and let $\Fmc_\tau$ be the sigma algebra generated by $\tau$. Then almost surely,  
    \begin{equation}
        \P( Y(\tau) \in A | \Fmc_\tau)(\omega)=f_{\tau(\omega)}(A,\omega) \qquad \forall A\in\mathfrak{B}(\Ymc).
    \end{equation}
\end{lemma}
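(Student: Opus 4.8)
The plan is to verify that the candidate random variable $\omega\mapsto f_{\tau(\omega)}(A,\omega)$ is a version of the regular conditional distribution $\P(Y(\tau)\in A\mid \Fmc_\tau)$, that is, to check its two defining properties: $\Fmc_\tau$-measurability and the averaging identity $\E[\one_{\{Y(\tau)\in A\}}\one_G]=\E[f_{\tau}(A,\cdot)\,\one_G]$ for every $G\in\Fmc_\tau$. The key structural input is Lemma~\ref{lem:stopped-sigma-algebra}, which applies thanks to the richness hypothesis \eqref{eq:lemma_condition} and yields the concrete description $\Fmc_\tau=\sigma(U(s\wedge\tau):s\ge 0)$. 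Since each $f_t(A,\cdot)$ is $\Fmc_t^U$-measurable, hence a measurable functional of the path $(U(s))_{s\le t}$, fixing a jointly measurable version of $(t,\omega)\mapsto f_t(A,\omega)$ renders $\omega\mapsto f_{\tau(\omega)}(A,\omega)$ a measurable function of $\tau$ and of the stopped path $U(\cdot\wedge\tau)$, and therefore $\Fmc_\tau$-measurable. This disposes of measurability; the substance is the averaging identity.

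First I would establish the averaging identity when $\tau$ takes countably many values $\{t_i\}$. On $\{\tau=t_i\}$ the candidate equals $f_{t_i}(A,\cdot)$, and for $G\in\Fmc_\tau$ one has $G\cap\{\tau=t_i\}\in\Fmc_{t_i}^U$; summing over $i$ and using that $f_{t_i}(A,\cdot)$ is a version of $\P(Y(t_i)\in A\mid\Fmc_{t_i}^U)$ gives
\begin{equation*}
\E[\one_{\{Y(\tau)\in A\}}\one_G]=\sum_i \E[\one_{\{Y(t_i)\in A\}}\one_{G\cap\{\tau=t_i\}}]=\sum_i \E[f_{t_i}(A,\cdot)\,\one_{G\cap\{\tau=t_i\}}]=\E[f_{\tau}(A,\cdot)\,\one_G].
\end{equation*}
This settles the discrete case cleanly, with no regularity required.

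For a general almost surely finite $\tau$ I would approximate from above by the dyadic stopping times $\tau_n=2^{-n}\lceil 2^n\tau\rceil$, so that $\tau_n\downarrow\tau$, each $\tau_n$ is an $\Fmc^U$-stopping time with countably many values, and the discrete case gives $f_{\tau_n}(A,\cdot)=\P(Y(\tau_n)\in A\mid\Fmc_{\tau_n})$ almost surely. Fixing $G\in\Fmc_\tau\subseteq\Fmc_{\tau_n}$ and conditioning on $\Fmc_\tau$ yields $\E[f_{\tau_n}(A,\cdot)\mid\Fmc_\tau]=\E[\one_{\{Y(\tau_n)\in A\}}\mid\Fmc_\tau]$. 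Using right-continuity of $Y$ (valid in the intended application, where $Y$ is c\`adl\`ag) one has $Y(\tau_n)\to Y(\tau)$, so for Borel sets $A$ with $\P(Y(\tau)\in\partial A)=0$ the right-hand side converges in $L^1$ to $\P(Y(\tau)\in A\mid\Fmc_\tau)$; it remains only to identify the $L^1$-limit of the left-hand side with $f_\tau(A,\cdot)$.

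The \emph{main obstacle} is precisely this last identification, namely showing $f_{\tau_n}(A,\cdot)\to f_\tau(A,\cdot)$, which is a right-continuity property of the family $t\mapsto f_t(A,\cdot)$ of conditional distributions. A reverse-martingale argument along the decreasing $\sigma$-algebras $\Fmc_{\tau_n}$ identifies $\lim_n f_{\tau_n}(A,\cdot)$ with $\P\bigl(Y(\tau)\in A\mid\bigcap_n\Fmc_{\tau_n}\bigr)$, and $\bigcap_n\Fmc_{\tau_n}$ equals the germ $\sigma$-algebra $\Fmc_{\tau+}$ rather than $\Fmc_\tau$; hence the crux is to reconcile $\Fmc_{\tau+}$ with $\Fmc_\tau$ for the conditional law of $Y(\tau)$, which is where right-continuity of $Y$ together with the structure of the driving filtration $\Fmc^U$ must be exploited. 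Once $f_{\tau_n}(A,\cdot)\to f_\tau(A,\cdot)$ is secured, combining it with the two limits above gives $\P(Y(\tau)\in A\mid\Fmc_\tau)=f_\tau(A,\cdot)$ for all continuity sets $A$, and a monotone-class / $\pi$--$\lambda$ argument extends the identity to all $A\in\mathfrak{B}(\Ymc)$, completing the proof.
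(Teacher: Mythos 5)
Your measurability argument and your treatment of countably-valued stopping times are both correct, but the passage to a general $\tau$ contains a genuine gap that you have identified yourself without closing: the convergence $f_{\tau_n}(A,\cdot)\to f_\tau(A,\cdot)$ along the dyadic approximations $\tau_n\downarrow\tau$. This is not a technical loose end --- it is the entire content of the lemma at the limiting stage. As you observe, the reverse-martingale / $L^1$ limit of $\E[\one_{\{Y(\tau_n)\in A\}}\mid\Fmc_{\tau_n}]$ naturally identifies a conditional law given $\bigcap_n\Fmc_{\tau_n}$, which is the germ $\sigma$-algebra $\Fmc_{\tau+}$ and in general strictly contains $\Fmc_\tau$; nothing in the hypotheses of the lemma (no right-continuity of the filtration $\Fmc^U$, no regularity of $t\mapsto f_t(A,\cdot)$ beyond each $f_t$ being \emph{some} version of the conditional law at the fixed time $t$) provides a mechanism to descend from $\Fmc_{\tau+}$ to $\Fmc_\tau$. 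Writing ``once $f_{\tau_n}(A,\cdot)\to f_\tau(A,\cdot)$ is secured'' leaves the hardest step unproved, and it is not clear it can be secured along this route under the stated assumptions.

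The paper avoids the discretization entirely and argues directly. It first shows (as you do) via Lemma~\ref{lem:stopped-sigma-algebra} and the richness condition \eqref{eq:lemma_condition} that $\Fmc_\tau=\sigma(U(s\wedge\tau):s\ge 0)$, and that the candidate $g_A(\omega)=f_{\tau(\omega)}(A,\omega)$ is a functional of the stopped path alone --- the key observation being that if two sample points have $U$-paths agreeing on $[0,\tau(\omega_1))$ then they have the same value of $\tau$ and the same value of $f_\tau(A,\cdot)$, so $g_A$ is $\Fmc_\tau$-measurable. It then verifies the averaging identity $\int_B\one_{\{Y(\tau)\in A\}}\,\diff\P=\int_B f_{\tau(\omega)}(A,\omega)\,\P(\diff\omega)$ for $B\in\Fmc_\tau$ in one step, using the identification of $\Fmc_\tau$ with the stopped-path $\sigma$-algebra and Definition~\ref{def:reg_cond}, with no limiting procedure and hence no encounter with $\Fmc_{\tau+}$. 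If you wish to salvage your approach, you would need either to impose right-continuity of $\Fmc^U$ (so that $\Fmc_{\tau+}=\Fmc_\tau$, which is an additional hypothesis) or to prove the averaging identity directly against sets of the form $B=\{U(s_1\wedge\tau)\in C_1,\dots,U(s_k\wedge\tau)\in C_k\}$, which essentially reduces to the paper's argument.
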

\begin{proof}
Since $Y$ has a Polish state space, the regular conditional distribution $f_t$ exists.

For $\omega\in\Omega$ and $A\in\mathfrak{B}(\Ymc)$, define 
\begin{equation}
    g_A(\omega):=\begin{cases}
        f_{\tau(\omega)}(A,\omega), & \text{if } \tau(\omega)<\infty,
        \\ 0, & \text{otherwise.}
    \end{cases}
\end{equation}
The proof follows in two steps. 

Step 1: we prove that $\Omega\ni \omega \rightarrow g_A(\omega)\in [0,1]$ is $\Fmc_\tau / \mathfrak{B}([0,1])$-measurable. We first claim the following: for any $\omega_1,$ $\omega_2\in\Omega$, if
\begin{equation}\label{eq:claim1}
    U(s)(\omega_1)=U(s)(\omega_2) \text{ on } [0,\tau(\omega_1), 
\end{equation}
then 
\begin{equation}\label{eq:claim2}
   \tau(\omega_1)=\tau(\omega_2).
\end{equation}
To see why the claim holds, since $\tau$ is a $\Fmc^U$-stopping time, by Definition \ref{def:stopping_time}(1) and Definition \ref{def:stopping_time}(3), for any $0\leq c \leq \bar{c} <\infty$, there exists a measurable function $H_c:\Dmc([0,\bar{c}]:\Umc)\rightarrow \R$ such that
\begin{equation}\label{eq:claim3}
    \one_{\{\tau(\omega)\leq c\}}= H_c(U(s)(\omega), \forall s \leq \bar{c}), \omega\in\Omega.
\end{equation}
If \eqref{eq:claim1} holds, then applying \eqref{eq:claim3} with $\bar{c}=\tau(\omega_1)$, for any $c\leq \bar{c}$, the right-hand side (and hence, the left-hand side) of \eqref{eq:claim3} is identical for $\omega=\omega_1$ and $\omega=\omega_2$. This implies \eqref{eq:claim2} and proves the claim. 

Since $\Omega\ni\omega \mapsto f_t(A,\omega)$ is $\sigmaalgebra/\mathfrak{B}([0,1])$ measurable for each $t\geq0$ and $\tau$ is a $\Fmc^U$-stopping time, $g$ is $\sigmaalgebra/\mathfrak{B}([0,1])$-measurable. If \eqref{eq:claim1} holds, then by \eqref{eq:claim2} and the fact that $\omega\rightarrow f_t(A,\omega)$ is $\Fmc_t^U$-measurable, it follows that $f_{\tau(\omega_1)}(A,\omega_1)=f_{\tau(\omega_2)}(A,\omega_2)$. Therefore, $g$ is an $\sigmaalgebra/\mathfrak{B}([0,1])$-measurable function that only depends on $\{U(s): s\leq \tau\}$ and  therefore it is $\sigma\{U(s\wedge\tau):\ s\geq 0 \}$-measurable. By Lemma \ref{lem:stopped-sigma-algebra}, $\sigma\{U(s\wedge\tau):\ s\geq 0 \}=\Fmc_\tau$ and therefore Step 1 is complete.

Step 2: Now, let $B\in \Fmc_\tau$. By definition of conditional expectation, it follows that
\begin{equation*}
\int_B \one_{\{Y(\tau(\omega),\omega)\in A\}} \P(\diff\omega)= \int_B \E[ \one_{\{Y(\tau(\omega),\omega)\in A\}} |\ \Fmc_\tau]\P(\diff\omega). 
\end{equation*}
Invoking first Lemma \ref{lem:stopped-sigma-algebra}, and then Definition \ref{def:reg_cond},
\begin{align*}
    \begin{split}
    \int_B \E[ \one_{\{Y(\tau(\omega),\omega)\in A\}} |\ \Fmc_\tau]\P(\diff\omega) &=\int_B \E[ \one_{\{Y(\tau(\omega),\omega)\in A\}} |\ U(s\wedge \tau),\ s\geq 0]\P(\diff\omega)
    \\ &= \int_B f_{\tau(\omega)}(A,\omega)\P(\diff\omega).
    \end{split}
\end{align*}

It follows that $g_A(\omega)=f_{\tau(\omega)}(A,\omega)$ is a version of the conditional expectation $\E[\one_{\{Y(\tau)\}}|\Fmc_\tau]$, and therefore $g_A(\omega)$ is a regular conditional distribution of $Y(\tau)$ given $\Fmc_\tau$.  Therefore, 
\begin{equation}
    \P( Y(\tau) \in A | \Fmc_\tau)(\omega)=f_{\tau(\omega)}(A,\omega) \ \forall  A\in\mathfrak{B}(\Ymc) \quad \text{a.s.,}
\end{equation}
which concludes the proof.

\end{proof}

\begin{theorem}
    \label{thm:tau_cond_ind}
    Let $\Umc$, $\Ymc$ and $\Wmc$ be Polish spaces equipped with their Borel sigma algebras and let $U$, $Y$ and $W$ be continuous time càdlàg stochastic processes with state spaces $\Umc$, $\Ymc$, and $\Wmc$ respectively define on a common probability space $(\Omega,\Fmc,\P)$.   If
    \begin{equation}\label{eq:assumpmrf}
        Y[t)\perp W[t)\ | U[t) \qquad \forall t\in[0,\infty),
    \end{equation}
    then for any a.s. finite $\Fmc^U$-stopping time $\tau$ 
    we have
    \begin{equation}
        Y[\tau) \perp W[\tau)\ | U[\tau) \qquad \text{a.s.}.
    \end{equation}
\end{theorem}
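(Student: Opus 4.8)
The plan is to recast the desired conditional independence as a \emph{product factorization of regular conditional distributions} (r.c.d.s) and to transport such a factorization from deterministic times to the stopping time $\tau$ using Lemma \ref{lem:main}. Since both the hypothesis and the conclusion involve only the joint law of $(U,Y,W)$, and $\tau$ is a measurable functional of the path of $U$, I may first assume the richness condition \eqref{eq:lemma_condition} holds, e.g. by realizing $(U,Y,W)$ as the coordinate process on the canonical space $\Dmc(\R_+:\Umc\times\Ymc\times\Wmc)$ (given $t$ and $\omega$, the stopped path $\omega(\cdot\wedge t)$ furnishes the required $\omega'$); this does not affect the law-level statements, and $\tau$ remains an a.s.\ finite $\Fmc^U$-stopping time. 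I then introduce the path-valued stopped processes $\bar Y(t):=Y(\cdot\wedge t)$, $\bar W(t):=W(\cdot\wedge t)$ and $\bar Z(t):=(\bar Y(t),\bar W(t))$, which take values in the Polish Skorokhod spaces $\Dmc(\R_+:\Ymc)$, $\Dmc(\R_+:\Wmc)$ and their product; note that $\bar Z(\tau)$ encodes the pair of stopped trajectories, from which $Y[\tau)$ and $W[\tau)$ are recovered measurably.

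Fix versions $f^{Y}_t(\cdot,\omega)$ and $f^{W}_t(\cdot,\omega)$ of the r.c.d.s of $\bar Y(t)$ and $\bar W(t)$ given $\Fmc^U_t$. The hypothesis \eqref{eq:assumpmrf}, namely $\bar Y(t)\perp\bar W(t)\mid\Fmc^U_t$ for each fixed $t$, is precisely the statement that, for each fixed $t$, the product kernel $f^{Y}_t\otimes f^{W}_t$ is a version of the r.c.d. of $\bar Z(t)$ given $\Fmc^U_t$. The decisive move is therefore to \emph{choose} $f^{Z}_t:=f^{Y}_t\otimes f^{W}_t$ as the version of the r.c.d. of $\bar Z(t)$, so that the product structure is built into the version rather than holding merely almost surely for each $t$.

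Next I would apply Lemma \ref{lem:main} three times, to $\bar Y$, $\bar W$, and $\bar Z$, each with conditioning process $U$ and the versions above. Since $\tau$ is an a.s.\ finite $\Fmc^U$-stopping time, the lemma yields that $\omega\mapsto f^{Y}_{\tau(\omega)}(\cdot,\omega)$, $\omega\mapsto f^{W}_{\tau(\omega)}(\cdot,\omega)$ and $\omega\mapsto f^{Z}_{\tau(\omega)}(\cdot,\omega)$ are $\Fmc_\tau$-measurable probability kernels that are r.c.d.s of $\bar Y(\tau)$, $\bar W(\tau)$ and $\bar Z(\tau)$ given $\Fmc_\tau$, respectively. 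By the chosen version, $f^{Z}_{\tau(\omega)}(\cdot,\omega)=f^{Y}_{\tau(\omega)}(\cdot,\omega)\otimes f^{W}_{\tau(\omega)}(\cdot,\omega)$ for a.e.\ $\omega$, so the joint r.c.d. of $(\bar Y(\tau),\bar W(\tau))$ given $\Fmc_\tau$ factors as the product of the marginal r.c.d.s, which is exactly $\bar Y(\tau)\perp\bar W(\tau)\mid\Fmc_\tau$. Since $Y[\tau)$ and $W[\tau)$ are $(\sigma(\bar Y(\tau))\vee\Fmc_\tau)$- and $(\sigma(\bar W(\tau))\vee\Fmc_\tau)$-measurable and conditional independence given $\Fmc_\tau$ is inherited by these enlargements, this gives $Y[\tau)\perp W[\tau)\mid\Fmc_\tau$. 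Finally, Lemma \ref{lem:stopped-sigma-algebra} (whose hypothesis is the richness condition verified above) identifies $\Fmc_\tau=\sigma(U(s\wedge\tau):s\ge0)$, the $\sigma$-algebra generated by the stopped trajectory $U[\tau)$, completing the proof.

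I expect the main obstacle to be exactly the passage from deterministic times to $\tau$: one cannot simply substitute $t=\tau(\omega)$ into the family of a.s.\ identities $f^{Z}_t=f^{Y}_t\otimes f^{W}_t$, because the exceptional null set depends on $t$ and the uncountable union $\bigcup_t(\text{null set at }t)$ may carry mass (a random time with continuous distribution can concentrate there, as the example following Proposition \ref{prop:2MRF_pathwise_stoptime} illustrates in spirit). The device that circumvents this is the combination of (i) \emph{selecting} the joint version of the r.c.d. to be the literal product of the marginal versions, so the product structure is intrinsic, and (ii) Lemma \ref{lem:main}, whose measurability argument guarantees that evaluating these selected versions at $\tau(\omega)$ yields genuine $\Fmc_\tau$-conditional distributions rather than an ill-defined pointwise substitution.
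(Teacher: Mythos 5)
Your proof is correct, and although it rests on the same two pillars as the paper's argument --- Lemma \ref{lem:main} to transport conditional distributions from deterministic times to $\tau$, and Lemma \ref{lem:stopped-sigma-algebra} to identify $\Fmc_\tau$ with $\sigma(U(s\wedge\tau):s\geq 0)$ --- the execution is genuinely different. The paper phrases conditional independence as redundancy of the extra conditioning: it fixes $g_t(z,w)=\E[f(Y[t))\mid U[t)=z[t),W[t)=w[t)]$ and $\gbar_t(z)=\E[f(Y[t))\mid U[t)=z[t)]$, applies Lemma \ref{lem:main} twice with two \emph{different} conditioning processes (once the pair $(U,W)$, once $U$ alone), and links the two applications through the identity $g_\tau(U,W)=\gbar_\tau(U)$. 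You instead phrase conditional independence as a product factorization of regular conditional distributions, condition on $U$ throughout, and apply Lemma \ref{lem:main} three times with the \emph{same} conditioning process. This buys two things. First, you only ever need $\tau$ to be an $\Fmc^U$-stopping time, whereas the paper's first application of Lemma \ref{lem:main} tacitly uses that $\tau$ is also a stopping time for the filtration generated by $(U,W)$ (true, but unremarked). Second, and more substantively, your explicit selection of the joint version $f^Z_t:=f^Y_t\otimes f^W_t$ confronts the null-set issue you flag: the paper's step asserting $\E[\one_B\, g_\tau(U,W)]=\E[\one_B\, \gbar_\tau(U)]$ substitutes the random time $\tau$ into a family of identities each holding only a.s.\ for fixed $t$, with a $t$-dependent exceptional set, and as written requires exactly the version-selection device you supply (the authors' own draft annotation concedes this part of the proof is unclear). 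Two small points to tighten: the hypothesis \eqref{eq:assumpmrf} concerns the half-open trajectories $Y[t)$, $W[t)$, $U[t)$ rather than the stopped paths $Y(\cdot\wedge t)$, so your path-valued processes should be set up consistently with that convention (or you should note the discrepancy is immaterial here); and you should record the monotone-class argument showing that $f^Y_t\otimes f^W_t$ is a legitimate version of the regular conditional distribution of the pair on the full product Borel $\sigma$-algebra, not merely on measurable rectangles.
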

\begin{proof} \draftnote{From Kavita's last feedback, the end of this proof was not clear, but I (June) could not fully follow the handwritten annotations - JC}
    Let $\phi:\Ymc\rightarrow\R$ be measurable. For each $t\in(0,\infty)$, $z\in \Dmc([0,\infty):\Umc)$, and $w\in\Dmc([0,\infty):\Wmc)$, we define
    \begin{equation*}
    g_t(z,w):=\E[f(Y[t))| U[t)=z[t),W[t)=w[t)]
    \end{equation*}
    and 
    \begin{equation*}
    \gbar_t(z):=\E[f(Y[t))| U[t)=z[t)],
    \end{equation*} 
    where we recall the notation $z[t):=\{z(s), s\in[0,t)\}$.
By \eqref{eq:assumpmrf},  $g_t(U,W)=\gbar_t(U)$ almost surely. 

Let $B\in\sigma(U[\tau),W[\tau))$. By the definition of conditional expectation,  it follows that
\begin{equation*}
\E[\one_B f(Y[\tau))] = \E[\one_B\E[f(Y[\tau)) | U[\tau), W[\tau)]].
\end{equation*}
By Lemma \ref{lem:main}, where \eqref{eq:lemma_condition} is satisfied since the processes $U$ and $W$ are càdlàg,  and the definition of $g_t$, we have
\begin{equation*}
\E[\one_B\E[f(Y[\tau)) | U[\tau), W[\tau)]]=\E[\one_B g_\tau( U, W) ].
\end{equation*}
From \eqref{eq:assumpmrf} and the definition of $\gbar_t$, it follows that
\begin{equation*}
\E[\one_B g_\tau( U, W) ]=\E[\one_B \gbar_\tau( U)].
\end{equation*}
Using Lemma \ref{lem:main} a second time, we get
\begin{equation*}
\E[\one_B \gbar_\tau( U)]=\E[\one_B\E[f(Y[\tau)) | U[\tau)] ],
\end{equation*}
which implies that $\E[f(Y[\tau)) | U[\tau)]=\E[f(Y[\tau)) | U[\tau), W[\tau)]$, thus establishing the lemma.
\end{proof}

We now derive Proposition \ref{prop:2MRF_pathwise_stoptime} from Theorem \ref{thm:tau_cond_ind}:

\begin{proof}[Proof of Proposition \ref{prop:2MRF_pathwise_stoptime} ]
Fix $A\subset \V$ with $|\partial^2A|<\infty$.  Let $U:=X_{\partial^2A}$, $Y:=X_{A}$, and $W:=X_{\V\setminus (A\cup\partial^2 A)}$. These are stochastic processes with state spaces $\stateS_\extra^{\partial^2A}$, $\stateS_\extra^A$, and $\stateS_\extra^{\V\setminus (A\cup\partial^2A)}$ respectively, which are marked rooted graphs and therefore Polish. By Theorem \ref{thm:regular-2mrf}, $Y[t) \perp W[t)\ | Z[t) $ for all $t>0$. 
 Theorem \ref{thm:tau_cond_ind} then completes the proof.
    
\end{proof}
\subsection{From the trajectorial to the time-marginal 2-MRF}
\label{subsec:proof_2-MRF(t)_from_[tau]_to_(t)}

This section is devoted to the proof of Theorem \ref{thm:2MRF_Markov}. In order to establish this result, we augment the state space to keep track of the previous state that the vertices are in. This motivates the definition of the following process.

\begin{definition}[Augmented process]
\label{def:y-process}
Let $X$ be the IPS defined by \eqref{eq:IPS_SDE}. For $v \in V$ and $t\geq 0,$ let $X_v^-(t)$ be the last state that $X_v$ was in before $t,$ that is, $X_v^-(t)=X_v(\sigma_v(t)),$ where
\begin{equation*}
    \sigma_v(t):=\sup\{s<t:X_v(s)\neq X_v(t)\},
\end{equation*}
with the convention that $\sigma_v(t):=0$ if the set over which the supremum is empty, and $0-:=0$. 
The augmentation $Y=(Y_v)_{v\in \V}$ of $X$ is the stochastic process with state space $\stateS_\extra^\V\times\stateS_\extra^\V$ defined by 
\begin{equation}
\label{eq:augmented_process}
    Y_v(t):=(X_v(t),X_v^-(t)).
\end{equation} 
\end{definition}
We also define a stopped version of the augmented process:
\begin{definition}[Stopped augmented process]
\label{def:stopped_y_process}
Let $A\subset \V$ be such that $|\partial^2 A| < \infty$, and define
Let 
\begin{equation}
\label{eq:tau_1}
\tauone=\inf \{t>0, X_{\partial^2 A}(t)\neq X_{\partial^2 A}(0)\} 
\end{equation}
be the first jump on the double boundary.
We then define the stopped process 
\begin{equation}
\label{eq:stopped_process}
Y^{\tauone}_v(t):=Y_v(t)\one_{\{t\leq \tauone \}}.
\end{equation}
\end{definition}

Note that defined in that way, $Y$ and $Y^{\tauone}$ are pure jump c\`adl\`ag processes.
We now state two lemmas related to conditional independence relations satisfied by the augmented processes.

\begin{lemma}[Trajectorial 2-MRF property for the augmented process]
\label{lem:pathwise_y_2-MRF}
Under Assumptions \ref{Ass:A_1}, \ref{Ass:B_1} and  \ref{Ass:A_2}, $A\subset \V$ with $|\partial^2 A| < \infty$, and $B\subset \V\setminus (A \cup \partial^2 A)$, we have
    \begin{equation}
    \label{eq:2MRF_pathwise}
        Y^{\tauone}_A[t] \perp Y^{\tauone}_B[t] | Y^{\tauone}_{\partial^2 A}[t], t\in[0,\infty).
    \end{equation}
\end{lemma}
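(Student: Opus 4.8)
The plan is to derive Lemma~\ref{lem:pathwise_y_2-MRF} from the stopping-time 2-MRF property already supplied by Proposition~\ref{prop:2MRF_pathwise_stoptime}, transferring that conditional independence first through the (information-preserving) augmentation $X\mapsto Y$ and then through the killing at $\tauone$. First I would observe that $\tauone=\inf\{s>0:X_{\partial^2 A}(s)\neq X_{\partial^2 A}(0)\}$, being the first jump time of $X_{\partial^2 A}$, is a stopping time with respect to $\Fmc^{X_{\partial^2 A}}$; hence $\tau:=t\wedge\tauone$ is a bounded (in particular a.s.\ finite) $\Fmc^{X_{\partial^2 A}}$-stopping time. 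Applying Proposition~\ref{prop:2MRF_pathwise_stoptime} with this stopping time yields $X_A[\tau]\perp X_B[\tau]\,|\,X_{\partial^2 A}[\tau]$.

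Next I would upgrade this to the augmented process. Since $X_v^-(s)$ is a deterministic measurable functional of the path $X_v[s]$ (and conversely $X_v$ is the first coordinate of $Y_v$), the stopped augmented trajectory $Y_S[\tau]$ generates the same $\sigma$-algebra as $X_S[\tau]$ for each of $S\in\{A,B,\partial^2 A\}$. The augmentation therefore carries no information beyond $X$, and the displayed conditional independence is equivalent to $Y_A[\tau]\perp Y_B[\tau]\,|\,Y_{\partial^2 A}[\tau]$. Write $\Hmc:=\sigma(Y_{\partial^2 A}[\tau])$ for the conditioning $\sigma$-algebra in this statement.

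The remaining work is to pass from the stopped trajectories $Y_S[\tau]$ to the killed trajectories $Y^{\tauone}_S[t]$. Because $Y^{\tauone}_v(s)=Y_v(s)$ for $s\le\tauone$ and $Y^{\tauone}_v(s)=0$ for $s>\tauone$, each $Y^{\tauone}_S[t]$ is a deterministic image of the pair $(Y_S[\tau],\tau)$, while conversely $Y_S[\tau]$ is recovered from $(Y^{\tauone}_S[t],\tau)$. The crucial point is that $\tau$ is measurable with respect to $\sigma(Y^{\tauone}_{\partial^2 A}[t])$: since $\tauone$ is the first jump of $X_{\partial^2 A}$, the killed double-boundary trajectory determines $\tau$ — if $\tauone\le t$, some boundary coordinate jumps visibly at $\tauone$ and is then sent to the cemetery value $0$, whereas if $\tauone>t$ the trajectory is constant on $[0,t]$, and in either case $\tau$ is read off. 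It follows that $\sigma(Y^{\tauone}_{\partial^2 A}[t])=\Hmc$ and that $\sigma(Y^{\tauone}_S[t])\subseteq\sigma(Y_S[\tau])\vee\Hmc$ for $S\in\{A,B\}$.

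Finally I would conclude by the standard conditional-independence algebra: from $Y_A[\tau]\perp Y_B[\tau]\,|\,\Hmc$ one may enlarge each side by the conditioning $\sigma$-algebra to obtain $\sigma(Y_A[\tau])\vee\Hmc\perp\sigma(Y_B[\tau])\vee\Hmc\,|\,\Hmc$, and conditional independence is inherited by sub-$\sigma$-algebras, so restricting to $\sigma(Y^{\tauone}_A[t])$ and $\sigma(Y^{\tauone}_B[t])$ gives $Y^{\tauone}_A[t]\perp Y^{\tauone}_B[t]\,|\,Y^{\tauone}_{\partial^2 A}[t]$, as desired. \textbf{The main obstacle} is not probabilistic — the substance is furnished entirely by Proposition~\ref{prop:2MRF_pathwise_stoptime} — but rather the $\sigma$-algebra bookkeeping of the third paragraph: verifying that the augmentation leaks no information across the boundary and, more delicately, that the first-jump/killing time $\tau$ is measurable with respect to the conditioning trajectory $Y^{\tauone}_{\partial^2 A}[t]$, which is precisely what licenses the enlarge-then-restrict manipulation.
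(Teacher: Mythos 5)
Your proof is correct and follows essentially the same route as the paper's: both derive the lemma from Proposition~\ref{prop:2MRF_pathwise_stoptime} applied at the first jump time of the double boundary. Your write-up is in fact more careful than the paper's terse two-line argument — in particular the use of the bounded stopping time $t\wedge\tauone$ (guaranteeing a.s.\ finiteness), the explicit transfer of the conditional independence from $X$ to the augmented process $Y$, and the verification that the killing time is measurable with respect to the conditioning trajectory all fill in steps the paper leaves implicit.
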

\begin{proof}
For $t> \tauone$, $Y_C^{\tauone}[t]$ is the identically zero trajectory for $C=A,B,\partial^2A$ and so \eqref{eq:2MRF_pathwise} holds trivially. For $t\leq\tauone$, the process $Y^{\tauone}$ coincides with the process $Y$ which is a jump process satisfying the assumptions of Proposition \ref{prop:2MRF_pathwise_stoptime}. Therefore, 
\begin{equation*}
Y_A[t] \perp Y_B[t] | Y_{\partial^2 A}[t], t\in[0,\tauone].
\end{equation*}

\end{proof}

From this pathwise property, we now deduce the following marginal property for the process $Y^\tau$:

\begin{lemma}[Time-marginal 2-MRF property for the augmented process]
\label{lem:marginal_z_2-MRF}
Under Assumptions \ref{Ass:A_1}, \ref{Ass:B_1} and  \ref{Ass:A_2}, $A\subset \V$ with $|\partial^2 A| < \infty$, and $B\subset \V\setminus (A \cup \partial^2 A)$,
    \begin{equation}
    \label{eq:2MRF_marginal_z}
        Y^{\tauone}_A(t) \perp Y^{\tauone}_B(t) | Y^{\tauone}_{\partial^2 A}(t), t\in[0,\infty).
    \end{equation}
\end{lemma}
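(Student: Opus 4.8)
The plan is to descend from the trajectorial conditional independence of Lemma \ref{lem:pathwise_y_2-MRF} to the stated time-marginal statement by weakening the conditioning $\sigma$-algebra, exploiting that the stopping at $\tauone$ freezes the double boundary and that the augmentation records pre-jump states. First, since for $C=A,B,\partial^2 A$ the time-$t$ value $Y^{\tauone}_C(t)$ is a measurable (evaluation) functional of the c\`adl\`ag trajectory $Y^{\tauone}_C[t]$, applying the standard stability of conditional independence under measurable maps of the conditionally independent blocks to \eqref{eq:2MRF_pathwise} yields
\begin{equation*}
Y^{\tauone}_A(t) \perp Y^{\tauone}_B(t) \ \big|\ Y^{\tauone}_{\partial^2 A}[t].
\end{equation*}
It then remains to replace the conditioning on the whole boundary trajectory $Y^{\tauone}_{\partial^2 A}[t]$ by conditioning on its time-$t$ marginal $Y^{\tauone}_{\partial^2 A}(t)$. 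This is not automatic, since passing to a coarser conditioning $\sigma$-algebra does not preserve conditional independence in general.

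To carry out this reduction I would split on the event $E:=\{t\leq\tauone\}$, which by \eqref{eq:stopped_process} coincides with $\{Y^{\tauone}_{\partial^2 A}(t)\neq \mathbf{0}\}$ and hence lies in $\sigma(Y^{\tauone}_{\partial^2 A}(t))$, where $\mathbf{0}$ denotes the absorbing value taken by the stopped process after $\tauone$. On $E^c=\{t>\tauone\}$ all stopped coordinates are absorbed, so $Y^{\tauone}_A(t)=Y^{\tauone}_B(t)=\mathbf{0}$ almost surely and the desired factorization is trivial. On $E$, the stopped augmented boundary process agrees with $Y_{\partial^2 A}$, which is constant equal to its initial value on $[0,\tauone)$ and has at most the single jump at $\tauone$; consequently the entire path $Y^{\tauone}_{\partial^2 A}[t]$ is a measurable function of the single marginal $Y^{\tauone}_{\partial^2 A}(t)$. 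Indeed, reading each boundary coordinate of the marginal as a pair (current state, previous state), the previous-state coordinate supplied by the augmentation (Definition \ref{def:y-process}) recovers the frozen value on $[0,\tauone)$, while the current-state coordinate recovers the post-jump value at the right endpoint; thus $\sigma(Y^{\tauone}_{\partial^2 A}[t])$ and $\sigma(Y^{\tauone}_{\partial^2 A}(t))$ have the same trace on $E$.

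Finally I would glue the two cases: since $E\in\sigma(Y^{\tauone}_{\partial^2 A}(t))$, conditional independence given $\sigma(Y^{\tauone}_{\partial^2 A}(t))$ may be verified separately on $E$ and on $E^c$. The factorization holds trivially on $E^c$, and on $E$ it follows from the displayed trajectorial conditional independence together with the equality of traces just established, yielding \eqref{eq:2MRF_marginal_z}. The main obstacle is exactly this reduction of the conditioning on $E$: one must show that no information is lost when passing from the boundary trajectory to its time-$t$ marginal, which is precisely what the augmentation and the stopping at the first boundary jump $\tauone$ (Definition \ref{def:stopped_y_process}) are designed to guarantee. A secondary technical point is to ensure that the absorbing value $\mathbf{0}$ is distinguishable from every live augmented state, so that $E$ is genuinely $\sigma(Y^{\tauone}_{\partial^2 A}(t))$-measurable; this can be arranged by treating $\mathbf{0}$ as an extra cemetery symbol adjoined to $\stateS_\extra\times\stateS_\extra$.
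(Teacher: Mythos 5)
Your proposal is correct and follows essentially the same route as the paper: both arguments reduce to the event $\{t\leq\tauone\}$ and use the fact that, since the double boundary is frozen before $\tauone$ and the augmentation records the pre-jump state, the $\sigma$-algebra generated by $Y^{\tauone}_{\partial^2 A}(t)$ coincides with that of the full trajectory $Y^{\tauone}_{\partial^2 A}[t]$ there, so the trajectorial conditional independence of Lemma \ref{lem:pathwise_y_2-MRF} descends to the time marginals. Your version is somewhat more careful than the paper's (the explicit trace-$\sigma$-algebra gluing over $E$ and $E^c$, and the observation that the absorbing value should be a genuine cemetery symbol rather than $0\in\stateS$), but these are refinements of the same argument rather than a different approach.
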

\begin{proof}
Again, we can restrict ourselves to $t\leq\tauone$. Then by the definition of $\tauone$ given in \eqref{eq:tau_1}, conditioning on $Y^{\tauone}_{\partial^2 A}(t)$ is the same as conditioning on the full trajectory $Y^{\tauone}_{\partial^2 A}[t]$ as there are no jumps prior to $\tauone$ on the double boundary. Therefore, we can apply Lemma \ref{lem:pathwise_y_2-MRF} and replace $Y^{\tauone}_{\partial^2 A}[t]$ by $Y^{\tauone}_{\partial^2 A}(t)$ in \eqref{eq:2MRF_pathwise} to obtain
    \begin{equation*}
        Y^{\tauone}_A[t] \perp Y^{\tauone}_B[t] | Y^{\tauone}_{\partial^2 A}(t),
    \end{equation*}
from which the result immediately follows.
\end{proof}

A final lemma we will require states that our dynamical system does not allow jumps to occur simultaneously on the double boundary of a potentially infinite vertex set and inside it.

\begin{lemma}
\label{lem:simplicity_PP}
Let $A\subset \V$ with $|\partial^2 A| < \infty$.
Let $\tauone$ be defined as in \eqref{eq:tau_1}.
Then,
\begin{equation*}
Y^\tau_A(\tauone-)=Y^\tau_A(\tauone).
\end{equation*}

\end{lemma}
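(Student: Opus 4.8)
The plan is to reduce the statement to a single assertion about the driving noise: almost surely, no vertex of $A$ fires at the first boundary-jump time $\tauone$. First I would dispose of the stopping and the augmentation. Since $\one_{\{t\le\tauone\}}=1$ for $t<\tauone$, we have $Y^{\tauone}_A(\tauone-)=Y_A(\tauone-)$, while $Y^{\tauone}_A(\tauone)=Y_A(\tauone)$, so it suffices to show $Y_A(\tauone-)=Y_A(\tauone)$. Moreover, by Definition \ref{def:y-process} the coordinate $X^-_v$ can change only at the jump times of $X_v$, so $Y_v=(X_v,X^-_v)$ has no jump at $\tauone$ whenever $X_v$ does not. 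Hence the whole claim reduces to: almost surely on $\{\tauone<\infty\}$, no $v\in A$ satisfies $X_v(\tauone-)\neq X_v(\tauone)$.

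Next I would record a uniform bound on the rates. By Assumption \ref{Ass:A_1} the offspring distribution has finite support, so every vertex of $\tree$ has degree at most $\dmax$; together with Assumption \ref{Ass:A_2} and the monotonicity of $C$ this gives $\r^j(t,\cdot)\le C_{\max}$ for all $t$, where $C_{\max}:=\lim_{t\to\infty}C(\dmax+1,t)<\infty$. Consequently a jump of $X_v$ can only be triggered by an atom $(s,u)$ of some $\mathbf{N}^{j}_v$ with $u<C_{\max}$; I will call such atoms \emph{admissible}, and note that within any window $[0,T]\times[0,C_{\max}]$ each $\mathbf{N}^{j}_v$ has finitely many of them almost surely.

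The core step is a simplicity (no common jump) property of the noise: almost surely, no two distinct processes among $\{\mathbf{N}^j_v:v\in\V,\,j\in\Jmc\}$ have admissible atoms sharing the same time coordinate. To prove this, fix a pair $(v,j)\neq(v',j')$ and a box $[0,T]\times[0,C_{\max}]$. Conditionally on $\mathbf{N}^j_v$, its admissible atom-times form a fixed finite set $S$, and by independence the (Lebesgue) intensity of $\mathbf{N}^{j'}_{v'}$ assigns mass zero to the null set $S\times[0,C_{\max}]$, so $\mathbf{N}^{j'}_{v'}$ a.s.\ has no atom there. Taking a union over the countably many pairs and over $T\in\N$ yields the claim. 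The essential feature is that this is a purely sample-path statement about the independent Poisson processes, so it does \emph{not} require $\tauone$ to be independent of the $A$-noise.

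Finally I would assemble these pieces. Because $|\partial^2A|<\infty$ and the rates are bounded, $\tauone>0$ a.s., and on $\{\tauone<\infty\}$ the jump of $X_{\partial^2A}$ at $\tauone$ is produced by an admissible atom of some $\mathbf{N}^{j}_w$ with $w\in\partial^2A$. Since $A\cap\partial^2A=\varnothing$, the simplicity property forbids any $\mathbf{N}^{j'}_v$ with $v\in A$ from having an admissible atom at $\tauone$; hence no $X_v$, $v\in A$, jumps there, giving $Y^{\tauone}_A(\tauone-)=Y^{\tauone}_A(\tauone)$. The one genuinely delicate point is to resist arguing by independence of $\tauone$ and the $A$-noise: that is false, since the rate of a vertex $w\in\partial^2A$ lying at graph distance one from $A$ depends on the states of its neighbors in $A$, making $\tauone$ measurable with respect to the $A$-noise as well. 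The sample-path simplicity argument is exactly what sidesteps this dependence, and is therefore the crux of the proof.
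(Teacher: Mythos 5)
Your proof is correct and rests on the same mechanism as the paper's: a jump of $X_v$ can occur only at an atom of its own driving noise $\mathbf{N}^j_v$, and the independent Poisson processes attached to distinct vertices almost surely have no coincident admissible atom times, so no $v\in A$ (with $A\cap\partial^2A=\varnothing$) can fire at the first boundary-jump time. The only difference is in the supporting machinery — the paper reduces to a finite collection of independent Poisson processes by invoking spatial localizability of UGW trees, whereas you use the uniform rate bound $C_{\max}$ together with a conditioning/null-set argument and a countable union over pairs — and your version is more self-contained, in particular because it makes explicit that the simplicity statement is a sample-path property holding simultaneously at all times, which is exactly what lets it be applied at the random time $\tauone$ without any spurious independence claim.
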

\begin{proof}
Let us reason by contradiction. Suppose that there exists $v\in A$ such that, with positive probability, $X_v(\tauone-)\neq X_v(\tauone),$ that is, there is a jump at time $\tauone$ in $v.$
Note that by Assumption \ref{Ass:A_1}, $G$ is a UGW tree and therefore belongs to the class of spatially localizable graphs, see \cite{GanRam2024}*{Definition 5.1}, \cite{GanRam2024}*{Proposition 5.14}, and \cite{GanRam2024}*{Proposition 5.17}. Therefore, there exists $l\in \N$ such that, denoting by $B_l(v)$ the ball of radius $l$ centered in $v$,  there exists a collection of i.i.d. Poisson point processes $(N_u)_{u \in B_l(v)}$ such that the jumps of $v$ are entirely determined by $(N_u)$ almost surely.
As $|\partial^2 A| < \infty$, the jumps of $v$ and the vertices in $\partial^2 A$ are determined by $(N_u)_{u \in B_l(v)\cup\partial^2 A}$, which form a finite collection of independent Poisson point processes, and therefore are simple point processes. This contradicts the existence of a vertex $v \in A$ with a jump at time $\tauone$ in $v.$
\end{proof}

We now proceed to prove Theorem \ref{thm:2MRF_Markov}.

\begin{proof}[Proof of Theorem \ref{thm:2MRF_Markov}]
Let $A\subset \V$ with $|\partial^2 A| < \infty$ and $B\subset \V\setminus (A \cup \partial^2 A)$.
First, using the definition \eqref{eq:tau_1} of $\tauone,$  since it is a stopping time with respect to the filtration generated by $X_{\partial^2 A}$ and $A$ and $B$ are disjoint from $\partial^2 A$, Lemma \ref{lem:simplicity_PP} ensures that  $X_A(\tauone)=X_A^-(\tauone)$ and $X_B(\tauone)=X_B^-(\tauone).$ In turn, this implies that
\begin{equation}
\label{eq:2-MRf_aux}
    \E[X_A(\tauone)|X_{\partial^2 A}(\tauone),X_B(\tauone)]=\E[X_A^-(\tauone)|X_{\partial^2 A}(\tauone),X_B^-(\tauone)].
\end{equation}
Now, from the definition \eqref{eq:IPS_SDE} of the IPS dynamics, their Markov structure, and the definition \ref{eq:stopped_process} of the stopped process, there exists a measurable function $f_{\partial^2 A}:\stateS^{|\partial^2A|}\times\stateS^{|\partial^2A|}\times\stateS^{|\partial^2A|}\rightarrow\stateS^{|\partial^2A|}$ such that
\begin{equation}
\label{eq:markov_func_repr}
X_{\partial^2 A}(\tauone)=f_{\partial^2 A}(X_{\partial^2 A}^-(\tauone),X_A^-(\tauone),X_B^-(\tauone)).
\end{equation}
By Lemma \ref{lem:marginal_z_2-MRF} and Definition \ref{def:stopped_y_process}, we have that $X_A^-(\tauone)$ and $X_B^-(\tauone)$ are conditionally independent given $X_{\partial^2 A}^-(\tauone)$ and $X_{\partial^2 A}(\tauone)$.
This in turn implies that
\begin{equation*}
\E[X_A^-(\tauone)|X_{\partial^2 A}(\tauone),X_B^-(\tauone)]=\E[X_A^-(\tauone) | X_{\partial^2 A} (\tauone)],
\end{equation*}
which when combined with \eqref{eq:2-MRf_aux} gives
\begin{equation}
\label{eq:2-MRF-tau_1}
      X_A(\tauone)\perp X_B(\tauone) | X_{\partial^2 A}(\tauone).
\end{equation}

For any stopping time $\sigma$, let $X_{\sigma}$ denote the process shifted by $\sigma,$ namely, for any $t>0, X_{\sigma}(t):=X(t+\sigma).$ For $i\geq 1$, let $\sigma_i$ be the time of the $i-$th jump on $\partial^2 A$ (Note that by definition of $\tauone$, see \eqref{eq:tau_1}, $\sigma_1=\tauone$). By Assumption \ref{Ass:A_3}, since the state space is finite, there is only a finite number of jumps on $\partial^2 A$. Therefore, there exists $i_{\infty}\in \N$ such that for all $i>i_{\infty}, \sigma_i=\infty.$ 

Since \eqref{eq:2-MRF-tau_1} holds, $X_{\sigma_1}(0)$ is a 2-MRF, and therefore $X_{\sigma_1}$ satisfies Assumption \ref{Ass:B_1}. We can thus apply Proposition \ref{prop:2MRF_pathwise_stoptime} to the shifted process $X_{\sigma_1}.$ This allows us to prove a time-marginal 2-MRF property for the process shifted by $\sigma_1$, and then proceed iteratively: for any $1\leq i\leq i_{\infty}$, we have
\begin{equation}
\label{eq:2-MRF-sigma_i}
      X_A(t)\perp X_B(t) | X_{\partial^2 A}(t), \sigma_i\leq t\leq \sigma_{i+1}.
\end{equation}
Since $i_{\infty}<\infty,$ the 2-MRF property \eqref{eq:2-MRF-sigma_i} in fact holds for any $t\geq 0$, which concludes the proof.
\end{proof}

\section{Proof of the Markovian projection}
\label{sec:Markov_proj_proof}
This section is devoted to the proof of Theorem \ref{thm:markov_proj}. To that effect, we prove a more general Markovian projection result for pure jump processes, that is, existence and uniqueness of a Markov process with the same time-marginals as the original pure jump process. We state the result in Section \ref{subsec:Markov_proj_theorem} with sufficient conditions for existence and uniqueness of the Markovian projection. In Section \ref{subsec:Markov_proj_Larsson_Long}, we appeal to a result by Larsson and Long \cite{larsson2024markovian} to establish the existence of a Markovian projection as the solution to a martingale problem. In Section \ref{subsec:Markov_proj_proof_uniqueness}, we prove that this martingale problem has a unique solution using its equivalence to an SDE, and that this implies the Feller property for the solution of the martingale problem.
\subsection{A Markovian projection theorem for pure jump processes}
\label{subsec:Markov_proj_theorem}
We aim to establish the existence of a process whose time-marginals coincide with the time-marginals of the solution to a jump SDE with potentially trajectory-dependent jump rates, with the local-field equation \eqref{eq:lf-SDE} in mind as an application.
We first define the trajectory-dependent IPS to which our Markovian projection result will apply.

\begin{definition}[Trajectory-dependent IPS]
\label{def:traj-dep_IPS}
Let $\stateS$ be a finite subset of $\Z$ representing the state space of each particle. Let $\Jmc\subset \stateS- \stateS\setminus\{0\}$ be the set of permissible jump sizes, and let $j_{\max}:=\max \Jmc.$ Fix $d\in\N\setminus\{0\},$ and let $\vertexS:=\{0,\ldots,d-1\}.$
Let $\Nlf=\{\Nlf_v^j\}_{j\in \Jmc, v\in\vertexS}$ be a collection of independent Poisson processes on $\R_+\times\R_+$ with 
unit intensity. For each  $j \in \Jmc$ and $v\in \vertexS$, the rate $\rjmp^j_v:[0,\infty)\times \D^\vertexS \rightarrow [0,\infty)$ is a measurable mapping that represents the size $j$ jump intensity of particle $v,$ with the assumption that if $x,y\in \D^\vertexS$ s.t. $x[t]=y[t],$ $\rjmp^j_v(t,x)=\rjmp^j_v(t,y).$

We define a stochastic process $\Xjmp$ on $\stateS$ as the solution to the following SDE:

    \begin{equation}\label{eq:trajIPS-SDE}
        \Xjmp_v(t)= \Xjmpinitial_v(0)+\sum_{j \in \Jmc}\int_{(0,t)\times \R_+} j  \one_{\{u<\rjmp_v^{j}(s,\Xjmp)\}} \mathbf{\Nlf}^j_v(\diff s,\diff u),\quad v\in \vertexS.
    \end{equation}
\end{definition}

Note that the local-field equation defined in Definition \ref{def:lfe} falls into this framework.

The Markovian projection of a trajectory-dependent IPS can be given as the solution to an SDE. However, due to considerations to existing formulations of Markovian projection results in the literature as well as an anticipation on the proof of our Markovian projection theorem, we note that the Markovian projection can be equivalently defined as the solution to a martingale problem, which we will now introduce. 
Let $\Omega:=\Dmc(\mathbb{R}_+: \mathbb{R}^d)$ and let $\Fmc^0:=\Bmc(\Omega)$ be the corresponding Borel $\sigma$-algebra.
Let $\Xcanon$ be the canonical process, that is,\ $\Xcanon(t,\omega) = \omega(t)$ for $\omega \in \Omega$ and $t \geq 0$. 
Consider the non-local operator $\Gen = (\Gen_t)_{t \geq 0}$ given by
\begin{equation}\label{eq:mart_problem_gen}
\begin{split}
	\Gen_t f(x) \coloneqq \int_{\mathbb{R}^d} \bigl(f(x + \xi) - f(x)  \bigr) \,\k(t,x, \diff \xi),
\end{split}
\end{equation}
for $f \in C^2(\mathbb{R}^d) \cap C_b(\mathbb{R}^d)$ and $x \in \mathbb{R}^d$,
where $\k$ is a L\'evy transition kernel from $\mathbb{R}_+ \times \mathbb{R}^d$ to $\mathbb{R}^d$ such that $\k(t,x,\cdot)$ is supported on a bounded set.
Let $\Fmc^{\Xcanon}$ be the natural filtration generated by $\Xcanon$, and $\Fmb=\{\Fmc_t\}$ be the right-continuous regularization of $\Fmc^{\Xcanon}$.

\begin{definition}[Martingale Problem]
\label{def:larsson_long_mtg_pblm}
Let $\nu_0 \in \mathcal{P}(\mathbb{R}^d)$. A solution to the martingale problem  for $\mathcal{L}$ with initial law $\nu_0$ is a probability measure $\Qmb$ on $(\Omega,\Fmb)$ such that
\begin{enumerate}[label=(\roman*)]
\item $\mathbb{Q} \circ (\Xcanon(0))^{-1} = \nu_0$,

\item for each $f \in C_c^2(\mathbb{R}^d)$, the process
\begin{equation*}
	M^f_t \coloneqq f(\Xcanon_t) - f(\Xcanon(0)) - \int_0^t \Gen_s f(\Xcanon(s))\diff s
\end{equation*}
is well-defined and an $\mathbb{F}$-martingale under $\Qmb$.
\end{enumerate}
\end{definition}
  
We now state our Markovian projection result for history-dependent jump processes:
To apply the results of \cite{larsson2024markovian}, we embed the jump directions $\Jmc^d$ in $\R^d$. Let $\ev$ be the $v^{th}$ standard basis vector in $\R^d$. 

\begin{theorem}
\label{thm:Markov_proj_for_hist_IPS}
Let $\Xjmp$ be a trajectory-dependent IPS as in Definition \ref{def:traj-dep_IPS}. Suppose that there exists $C>0$ such that for every $j\in\Jmc$, every $v\in\vertexS$, and every $t\geq 0,$ the transition rates $\rjmp^j_v$ are c\`agl\`ad and satisfy
\begin{equation}
\label{eq:cond_rates_exist_MP}
\rjmp^j_v(t,\cdot)\leq C.
\end{equation}
Define the rates $\rmjmp_v^{j}:[0,\infty)\times \stateS^\vertexS \rightarrow [0,\infty)$ by
\begin{equation}
    \label{eq:Markov_proj_rates}
    \rmjmp_v^{j}(s,z)=\E[\rjmp^j_v(s,\Xjmp)|\Xjmp(s-)=z].
\end{equation}
Then there exists a unique Feller process $\Xmp$
satisfying  
\begin{equation}
    \Xmp(t)\deq \Xjmp(t) \qquad \forall t\geq 0.
\end{equation}
The process $\Xmp$
 can be defined in one of the following equivalent ways:
\begin{enumerate}
    \item $\Xmp$ is the unique solution to the  SDE
\begin{equation}
\label{eq:Markov_proj_SDE}
     \Xmp_v(t)= \Xjmpinitial_v(0)+\sum_{j \in \Jmc}\int_{(0,t)\times \R_+} j  \one_{\{u<\rmjmp_v^{j}(s,\Xmp(s))\}} \mathbf{\Nlf}^j_v(\diff s,\diff u), v\in \vertexS.
\end{equation}
\item $\Xmp$ is the solution to the martingale problem with $\Gen$ defined as in \eqref{eq:mart_problem_gen} with $\k$ given by
\begin{equation}\label{eq:k_comp}
    \k(t,x,\diff \xi) =\one_{\{ \exists j\in\Jmc,\  v\in\vertexS \ : \ \xi=j \ev\}}\E[\rjmp^j_{v}(s,\Xjmp)) |  \Xjmp(s-)=x]\countm(\diff \xi),
\end{equation}
where $\countm$ is the counting measure.
\end{enumerate}
\end{theorem}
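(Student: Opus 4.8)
The plan is to construct the Markovian projection $\Xmp$ in three stages: first extract a solution to the martingale problem of Definition \ref{def:larsson_long_mtg_pblm} from the general mimicking theorem of Larsson and Long \cite{larsson2024markovian}, which guarantees only \emph{existence}; then show that this martingale problem is equivalent to the jump SDE \eqref{eq:Markov_proj_SDE}; and finally establish well-posedness of that SDE directly, which upgrades the Larsson--Long solution to a unique Feller process with the correct marginals. To invoke \cite{larsson2024markovian} I would first check its hypotheses for the process $\Xjmp$ of Definition \ref{def:traj-dep_IPS}. Since $\stateS$ is finite and $\Jmc$ is a finite set of jump sizes, the jumps of $\Xjmp$ lie in the fixed finite set $\{j\ev : j\in\Jmc,\ v\in\vertexS\}$, and its predictable jump compensator is $\bigl(\sum_{j\in\Jmc}\sum_{v\in\vertexS}\rjmp^j_v(t,\Xjmp)\,\delta_{j\ev}(\diff\xi)\bigr)\diff t$. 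Under the boundedness assumption \eqref{eq:cond_rates_exist_MP}, this compensator has total mass at most $C|\Jmc||\vertexS|$ and is supported on a fixed bounded set, so the resulting characteristics meet the integrability and support conditions imposed on the kernel $\k$ preceding Definition \ref{def:larsson_long_mtg_pblm}. The projected kernel \eqref{eq:k_comp} is precisely the conditional expectation of this compensator given the current position, weighting each Dirac mass at $j\ev$ by $\rmjmp^j_v$ from \eqref{eq:Markov_proj_rates}; the Larsson--Long construction then yields a probability measure $\Qmb$ solving the martingale problem for $\Gen$ with $\Qmb\circ\Xcanon(t)^{-1}=\law(\Xjmp(t))$ for all $t\geq 0$.

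Next I would establish the equivalence of descriptions (1) and (2) in the statement. Because the kernel \eqref{eq:k_comp} is a finite, state-dependent linear combination of Dirac masses, the martingale problem for $\Gen$ is equivalent, by the standard correspondence between martingale problems for jump generators and SDEs driven by Poisson random measures, to the existence of a weak solution of \eqref{eq:Markov_proj_SDE}: from a solution $\Qmb$ one realizes the jumps in each direction $j\ev$ through a unit-rate Poisson process $\Nlf^j_v$ by thinning and a random time change, and conversely any solution of \eqref{eq:Markov_proj_SDE} solves the martingale problem by applying the change-of-variables formula to the compensated Poisson integrals. It therefore suffices to analyze \eqref{eq:Markov_proj_SDE}.

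The crux is uniqueness. The projected rates satisfy $\rmjmp^j_v(s,\cdot)\leq C$, since conditional expectation preserves the bound \eqref{eq:cond_rates_exist_MP}, and admit a jointly measurable version in $(s,z)$ because $\stateS^\vertexS$ is discrete, hence Polish. Consequently \eqref{eq:Markov_proj_SDE} is a time-inhomogeneous pure-jump SDE on the \emph{finite} space $\stateS^\vertexS$ with bounded measurable rates that no longer depend on the law of the solution. Its jump times are dominated by those of a Poisson process of rate $C|\Jmc||\vertexS|$, so the process is non-explosive, and given the driving processes $\{\Nlf^j_v\}$ and the initial condition the solution is built pathwise by interlacing: each successive jump time and target state is determined by the first atom of the relevant $\Nlf^j_v$ falling below the then-constant rate. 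This yields pathwise uniqueness and hence uniqueness in law. Strong well-posedness of \eqref{eq:Markov_proj_SDE} forces the Larsson--Long solution $\Qmb$ to be unique; uniqueness of the martingale problem delivers the Markov property, and boundedness of the rates on the finite state space makes the transition semigroup Feller. The marginal identity $\Xmp(t)\deq\Xjmp(t)$ is inherited from the marginal matching already established for $\Qmb$.

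I expect the main obstacle to be exactly this uniqueness-and-Feller step, since \cite{larsson2024markovian} supplies only existence and does not guarantee uniqueness or the Markov property of the mimicking process. The argument hinges on exploiting the discrete state space and bounded total jump rate to reduce \eqref{eq:Markov_proj_SDE} to an interlacing construction that is manifestly unique given the Poisson noise. A secondary technical point requiring care is the measurability and regularity of the projected rates $\rmjmp^j_v$ defined by the conditional expectations in \eqref{eq:Markov_proj_rates}, which must be arranged so that the martingale problem and the SDE \eqref{eq:Markov_proj_SDE} are genuinely equivalent.
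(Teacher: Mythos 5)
Your proposal follows essentially the same route as the paper: invoke the Larsson--Long mimicking theorem to get existence of a solution to the martingale problem with matching marginals, pass to the SDE \eqref{eq:Markov_proj_SDE} via the standard martingale-problem/SDE correspondence (the paper does this through Kurtz's equivalence theorem), and then derive uniqueness and the Feller property from well-posedness of that SDE (the paper defers the SDE well-posedness to prior work and obtains Feller via a citation, where you argue both directly by interlacing and finiteness of the state space). The argument is correct and the differences are only in which steps are spelled out versus delegated to references.
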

\begin{remark}
Theorem \ref{thm:markov_proj} is an immediate consequence of Theorem \ref{thm:Markov_proj_for_hist_IPS}.
\end{remark}
\subsection{Proof of Theorem \ref{thm:Markov_proj_for_hist_IPS}}
\label{subsec:Markov_proj_for_hist_proof}
\subsubsection{A Markovian projection result from Larsson and Long \cite{larsson2024markovian}}
\label{subsec:Markov_proj_Larsson_Long}

To prove the existence of the Markovian projection in Theorem \ref{thm:Markov_proj_for_hist_IPS}, we will show that we can apply \cite{larsson2024markovian}*{Theorem 3.2} to our setting. Their result is formulated as a sufficient condition for the existence of a solution to the martingale problem for a certain generator such that the time-marginals of this solution and that of the original jump SDE coincide. We start by showing that we can rewrite the generator \eqref{eq:mart_problem_gen} in the following form that will enable us to easily check that the sufficient conditions from \cite{larsson2024markovian} are satisfied in our framework.

Indeed, let $b:\R_+\times\R^d\rightarrow\R^d$ be a measurable map such that for every $f\in C^2(\R^d)\cap C_b(\R^d)$ and every $t\geq 0,$
\begin{equation}
\label{eq:def_b}
b(t,x)\cdot\nabla f(x)=\int_{\R^d}\nabla f(x)\cdot \xi K(t,x,\diff \xi).
\end{equation}
We can then write
\begin{equation}
\label{eq:our_generator}
	\Gen_t f(x) = b(t, x) \cdot \nabla f(x) + \int_{\mathbb{R}^d} \bigl(f(x + \xi) - f(x) - \nabla f(x) \cdot \xi \bm{1}_{\{|\xi| \leq r\}}\bigr) \,\k(t,x, \diff \xi).
\end{equation}
Since the jumps in the trajectory-dependent IPS \eqref{eq:trajIPS-SDE} are bounded, we can choose $r$ large enough so that $\bm{1}_{\{|\xi| \leq r\}}$ is always equal to 1.

\begin{lemma}
\label{lem:MP_existence}
Let $\Xjmp$ be a trajectory-dependent IPS as in Definition \ref{def:traj-dep_IPS}. Suppose that there exists $C>0$ such that for every $j\in\Jmc$, every $v\in\vertexS$, and every $t\geq 0,$ the transition rates $\rjmp^j_v$ are c\`agl\`ad and satisfy
\begin{equation*}
\rjmp^j_v(t,\cdot)\leq C.
\end{equation*}
Let $\k$ be defined as in \eqref{eq:k_comp}.

Then there exists a solution $\Xmp$ to the martingale problem for $\mathcal{L}$, defined in \eqref{eq:mart_problem_gen}, such that for each $t \geq 0$,  $\law(\Xjmp(t))=\law(\Xmp(t))$.
 \end{lemma}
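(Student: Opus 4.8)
The plan is to show that our hypotheses are exactly those needed to invoke \cite{larsson2024markovian}*{Theorem 3.2}, whose conclusion furnishes a solution to a martingale problem whose one-dimensional time marginals agree with those of a prescribed semimartingale with jumps. The strategy therefore breaks into three tasks: (i) identify the semimartingale characteristics of the trajectory-dependent process $\Xjmp$; (ii) verify that the uniform bound \eqref{eq:cond_rates_exist_MP} together with the finiteness of the jump set $\{j\ev : j\in\Jmc,\ v\in\vertexS\}$ yields the boundedness and integrability conditions required by their theorem; and (iii) match the projected generator produced by their result with the operator $\Gen$ of \eqref{eq:mart_problem_gen} associated to the kernel $\k$ in \eqref{eq:k_comp}.

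First I would record the semimartingale structure of $\Xjmp$. By \eqref{eq:trajIPS-SDE}, $\Xjmp$ is a pure-jump c\`adl\`ag semimartingale whose jumps all lie in the finite set $\{j\ev : j\in\Jmc,\ v\in\vertexS\}\subset\R^d$. Since the rates $\rjmp^j_v$ are c\`agl\`ad, the integrands in \eqref{eq:trajIPS-SDE} are predictable, and the predictable compensator of the jump measure of $\Xjmp$ is
\begin{equation*}
\countm^{\Xjmp}(\diff s,\diff\xi)=\Big(\sum_{j\in\Jmc}\sum_{v\in\vertexS}\rjmp^j_v(s,\Xjmp)\,\delta_{j\ev}(\diff\xi)\Big)\diff s.
\end{equation*}
As \eqref{eq:trajIPS-SDE} carries no intrinsic drift and no diffusion term, the differential characteristics of $\Xjmp$ are absolutely continuous in time, with vanishing diffusion part, a truncated drift arising solely from the compensated jumps (and hence bounded, since both the jumps and the rates are bounded), and a L\'evy kernel of total mass at most $C|\Jmc||\vertexS|$ supported on the fixed bounded set above. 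In particular the total jump rate is uniformly bounded, so $\Xjmp$ is non-explosive.

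Next I would pass to the conditional (projected) characteristics. For each $t\geq 0$ and $x\in\stateS^\vertexS$, conditioning on $\{\Xjmp(t-)=x\}$ and using the definition \eqref{eq:Markov_proj_rates} of the projected rates $\rmjmp^j_v$, the projected L\'evy kernel is precisely $\k(t,x,\diff\xi)$ from \eqref{eq:k_comp}, and the associated drift agrees with $b$ as specified in \eqref{eq:def_b}. Since $\stateS$ is finite, the state space $\stateS^\vertexS$ is finite, so these conditional expectations are elementary (with the convention $0/0=0$ on null events) and are automatically jointly measurable in $(t,x)$; moreover the bound \eqref{eq:cond_rates_exist_MP} is inherited by $\rmjmp^j_v$, so the projected characteristics remain bounded with bounded support and absolutely continuous in time. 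This verifies the integrability and boundedness hypotheses of \cite{larsson2024markovian}*{Theorem 3.2}.

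I would then invoke \cite{larsson2024markovian}*{Theorem 3.2} directly: it produces a probability measure $\Qmb$ on $(\Omega,\Fmb)$ solving the martingale problem for the generator built from the conditional characteristics, which by the identification of the previous step is exactly $\Gen$ as defined in \eqref{eq:mart_problem_gen} with $\k$ given by \eqref{eq:k_comp}, and under which the canonical process $\Xmp$ satisfies $\law(\Xmp(t))=\law(\Xjmp(t))$ for every $t\geq 0$. The main obstacle is the bookkeeping of the middle step: one must carefully translate the general semimartingale formulation of \cite{larsson2024markovian} into our jump-SDE framework and confirm that their abstract requirements collapse, in the present bounded-rate finite-state setting, to the single hypothesis \eqref{eq:cond_rates_exist_MP}. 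The regularity restrictions that obstruct the earlier constructions of \cite{bentata2009mimicking,kopfer2023markov} are circumvented precisely because \cite{larsson2024markovian} asks only for \emph{existence} of a martingale-problem solution, which is all that is claimed in this lemma; uniqueness and the Markov property are taken up separately in Section \ref{subsec:Markov_proj_proof_uniqueness}.
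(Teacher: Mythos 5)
Your proposal is correct and follows essentially the same route as the paper: both reduce the lemma to verifying the integrability/boundedness hypotheses of \cite{larsson2024markovian}*{Theorem 3.2} via the uniform rate bound and the finiteness of the jump set $\{j\ev\}$, and then identify the conditionally-expected compensator with the kernel $\k$ of \eqref{eq:k_comp}. The paper simply makes the verification more explicit by checking their conditions (3.2) and (3.4) with the concrete bounds $t|\Jmc|Cd$ and $j_{\max}^2|\Jmc|Cd$, which your sketch subsumes.
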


\begin{proof}
This would be a consequence of \cite{larsson2024markovian}*{Theorem 3.2}, with 
\begin{equation}
\label{eq:our_compensator}
\kappa_t(\diff \xi) :=\one_{\{ \exists j\in\Jmc, v\in \vertexS \ : \ \xi=j\ev\}}\rjmp^j_v(t,\Xjmp) \countm(\diff \xi),
\end{equation}
where $\countm$ is the counting measure and
\begin{equation}
\label{eq:our_drift}
\beta_s=\sum_{j\in J}\one_{\{j\in\Jmc, v \ : \ \xi=j\ev\}}\rjmp^j_v(s,x),
\end{equation} 
if we can verify that that the assumptions therein are satisfied. 

To this end, we start by showing that $\int_0^t\kappa_s(\Rd)ds$ has finite expectation for every $t\in\R_+$. Since $\Jmc$ is finite, it follows that
\begin{equation*}
    \text{supp}(\kappa_s)=|\{\xi : \xi= j \ev, j\in\Jmc, v\in\vertexS\}|=d|\Jmc|.
\end{equation*} 
Using \eqref{eq:cond_rates_exist_MP}, it follows that

\begin{align*}
    \E\left[\int_0^t\kappa_s(\Rd)\diff s\right] &= \E\left[ \int_0^t\int_\Rd \one_{\{ \exists j\in\Jmc, v\in\vertexS \ : \ \xi=j\ev\}}\rjmp^v_{j}(s,\Xjmp)) \countm(\diff \xi)\diff s \right]
    \\ &\leq  t |\Jmc|Cd
    \\ &<\infty.
\end{align*}
This establishes \cite{larsson2024markovian}*{condition (3.2)}. 

Now,  since $j_{\max}<\infty$, we have
    \begin{equation}
    \label{eq:cond_bound}
        \int_{\R^d} \frac{|\xi|^2}{1+ |\Xjmp(t)|^2} \kappa_t(d\xi) \leq  j_{\max}^2\int_{\R^d} \kappa_t(d\xi)= j_{\max} |\Jmc|Cd, 
    \end{equation}

Since the jump sizes of $\Xjmp$ are bounded by $j_{\max}$, \eqref{eq:cond_bound} establishes \cite{larsson2024markovian}*{Condition (3.4)}, as argued in \cite{larsson2024markovian}*{Remark 3.5}.
This in turn gives us the existence of a L\'evy transition kernel $\k$ from $\mathbb{R}_+ \times \mathbb{R}^d$ to $\mathbb{R}^d$ such that for Lebesgue-a.e.\ $t \geq 0$,
\begin{equation}\label{eq:mp_condexp}
	\begin{split}
		\k(t, \Xjmp(t-), A)
		&= \mathbb{E}\big[\kappa_t(A) \,\big|\, \Xjmp(t-)\big],\quad
		\forall\, A \in \mathcal{B}(\mathbb{R}^d),
	\end{split}
\end{equation}
as well as the existence of a solution to the martingale problem \eqref{eq:mart_problem_gen} with the desired time-marginals.  Finally, we observe that, since $\countm$ is the counting measure, 
\begin{align*}
    \mathbb{E}\big[\kappa_t(A) \,\big|\, \Xjmp(t-)\big] & = \mathbb{E}\big[\sum_{\xi\in A}\one_{\{ \exists j\in\Jmc, v\in \vertexS \ : \ \xi=j\ev\}}\rjmp^j_v(t,\Xjmp)\big|\, \Xjmp(t-)\big]
    \\ &=\sum_{\xi\in A} \one_{\{ \exists j\in\Jmc, v\in \vertexS \ : \ \xi=j\ev\}}\mathbb{E}\big[\rjmp^j_v(t,\Xjmp)\big|\, \Xjmp(t-)\big],
\end{align*}
which coincides with \eqref{eq:k_comp}.

\end{proof}

\subsubsection{Uniqueness of the Markovian projection and Feller property} 
\label{subsec:Markov_proj_proof_uniqueness}

After establishing existence of the Markovian projection in Section \ref{subsec:Markov_proj_Larsson_Long}, what remains to prove is that the martingale problem satisfied by the Markovian projection of the local-field equations has a unique solution. To do so, we will use the equivalence established by Kurtz \cite{Kurtz2011} between stochastic equations and martingale problems to show that the SDE \eqref{eq:Markov_proj_SDE} is equivalent to the martingale problem associated with \eqref{eq:our_generator}. 

Using \eqref{eq:our_drift}, we can rewrite \eqref{eq:Markov_proj_SDE} as 

\begin{equation}
\label{eq:SDE_Kurtz_form}
\begin{split}
\Xmp_v(t)=&\Xmp_v(0)+\int_0^t \beta_s(\Xmp_v(s))\diff s+\int_{[0,1]\times \Jmc\times [0,t]}\frac{j}{2}\one_{[0,\rmjmp_v^j(s,\Xmp(s))]}(u)\bar{\mathbf{N}}^j_v(\diff u\times \diff s)\countm(\diff j)\\
&+\int_{[0,1]\times \Jmc\times [0,t]}\frac{j}{2}\one_{[0,\rmjmp_v^j(s,\Xmp(s))]}(u)\mathbf{N}^j_v(\diff u\times \diff s)\countm(\diff j),        
\end{split}
\end{equation}
where $(\mathbf{N}^j_v)_{v\in V, j\in\Jmc}$ are independent Poisson point processes on $\R_+\times\R_+$ with unit intensity, $\bar{\mathbf{N}}^j_v(\diff s\times \diff u):=\mathbf{N}^j_v(\diff s \times \diff u)-\diff s\diff u$, is the compensated version of $\mathbf{N}^j_v$, and $\beta_s$ is as defined in \eqref{eq:our_drift}.

We state here a theorem of Kurtz \cite{Kurtz2011}[Theorem 2.3] applied to our setting in order to introduce the SDE equivalent to our martingale problem. In \cite{Kurtz2011}*{Section 2}, the result is written for time-homogeneous Markovian martingale problems, but extends to the time-inhomogeneous case. 

\begin{proposition}
\label{prop:kurtz_mp_sde_equiv}  
Assume that for any compact $B \in \R^d,$
\begin{equation}
\label{eq:kurtz_thm:assm}
\sup_{x\in B} (|\beta_s(x)|+\int_\Jmc \rmjmp^j_v(s,\Xmp(s^-))j^2\countm(\diff j)+\int_\Jmc \rmjmp^j_v(s,\Xmp(s^-))\countm(\diff j))< \infty,
\end{equation}
and that for $f\in C^2_c(\R^d), \mathcal{L}_t f \in B(\R^d).$
Then any solution to the martingale problem associated with \eqref{eq:our_generator} is a weak solution of \eqref{eq:SDE_Kurtz_form}.
\end{proposition}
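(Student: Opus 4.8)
The plan is to deduce this statement from Kurtz's equivalence theorem \cite{Kurtz2011}*{Theorem 2.3}, so that the proof reduces to two tasks: matching the coefficients of the stochastic equation \eqref{eq:SDE_Kurtz_form} with the drift and jump data of the generator \eqref{eq:our_generator}, and checking that the hypotheses of that theorem hold in our setting --- which are precisely the assumption \eqref{eq:kurtz_thm:assm} and the condition $\Gen_t f\in B(\R^d)$ imposed in the statement.

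First I would record the correspondence between the two descriptions. By \eqref{eq:def_b}, the generator \eqref{eq:our_generator} is the canonical ``drift plus compensated-jump'' form of the pure-jump generator \eqref{eq:mart_problem_gen}, with $b(t,x)=\int_{\R^d}\xi\,\k(t,x,\diff\xi)$ and L\'evy kernel $\k$ from \eqref{eq:k_comp}. Since $\k(t,x,\cdot)$ is a finite atomic measure supported on the $d|\Jmc|$ points $\{j\ev : j\in\Jmc,\ v\in\vertexS\}$ with masses $\rmjmp^j_v(t,x)$, its nonlocal part is represented as a sum of integrals against the thinned Poisson point processes $\mathbf{N}^j_v$ with thinning indicators $\one_{\{u<\rmjmp^j_v\}}$. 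Decomposing each such integral into its compensated-martingale part $\bar{\mathbf{N}}^j_v$ and complementary drift in Kurtz's manner, and matching the first-order term against $\beta$ from \eqref{eq:our_drift}, reproduces exactly the SDE \eqref{eq:SDE_Kurtz_form}, whose weak solutions Kurtz's theorem puts in correspondence with the solutions of the martingale problem for $\Gen$.

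Next I would verify the two hypotheses, both of which are immediate here. Because $\Jmc$ and $\vertexS$ are finite and $\rmjmp^j_v\le C$ by \eqref{eq:cond_rates_exist_MP}--\eqref{eq:Markov_proj_rates}, while jumps are bounded by $j_{\max}$, one has $\int_\Jmc \rmjmp^j_v\,j^2\,\countm(\diff j)\le j_{\max}^2|\Jmc|C$ and $\int_\Jmc \rmjmp^j_v\,\countm(\diff j)\le |\Jmc|C$; together with the boundedness of $\beta$, this makes the supremum in \eqref{eq:kurtz_thm:assm} finite uniformly over all of $\R^d$, not merely on compacts. The requirement $\Gen_t f\in B(\R^d)$ for $f\in C^2_c(\R^d)$ then follows since $f$ and $\nabla f$ are bounded, $b$ is bounded, and $\k(t,x,\cdot)$ has total mass at most $|\Jmc|C$ uniformly in $(t,x)$.

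The one genuine subtlety, and the step I expect to need the most care, is that \cite{Kurtz2011}*{Theorem 2.3} is formulated for time-homogeneous Markovian martingale problems, whereas $\Gen_t$ depends on $t$. To bridge this I would pass to the space-time augmented process $(t,\Xmp(t))$ on $\R_+\times\R^d$, whose generator adds the deterministic transport term $\partial_t$ to $\Gen_t$ acting in the spatial variables. The extra coordinate has unit drift and no jumps, so it trivially inherits the bounds above, and Kurtz's theorem applies to the augmented, now time-homogeneous, problem. Projecting the resulting weak solution back onto the $\R^d$ coordinate yields that any solution of the time-inhomogeneous martingale problem for $\Gen$ is a weak solution of \eqref{eq:SDE_Kurtz_form}, as claimed. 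The remaining check, that the driving noises and thinning structure restrict correctly under this projection, is routine.
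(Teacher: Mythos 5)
Your proposal is correct and follows essentially the same route as the paper, which presents this proposition as a direct instantiation of Kurtz's Theorem 2.3 after noting that the rates are bounded so that \eqref{eq:kurtz_thm:assm} holds. In fact you supply slightly more detail than the paper does on the one delicate point — the paper merely asserts that Kurtz's time-homogeneous result ``extends to the time-inhomogeneous case,'' whereas you justify this via the standard space-time augmentation $(t,\Xmp(t))$.
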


Note that since the jump rates $\rmjmp_v^j$ are bounded, condition \eqref{eq:kurtz_thm:assm} is satisfied. Since the SDE \eqref{eq:Markov_proj_SDE} is well-posed from considerations equivalent to those in Proposition \ref{prop:MLFE-wellposed}, we obtain the desired uniqueness of the solution to the martingale problem. Now, all that remains is to show that this solution is indeed a Markov process. To that end, we state a result on the fact that uniqueness in law of the solution of a state-dependent SDE implies that the solution is a Feller process, and therefore is in particular a Markov process.
\begin{proposition}\label{prop:uniq_imply_feller}
    Suppose that the SDE \eqref{eq:Markov_proj_SDE} has a unique-in-law solution $\Xmp$. Then $\Xmp$ is a Feller process. 
\end{proposition}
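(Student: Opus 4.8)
The plan is to transfer the uniqueness hypothesis from the SDE \eqref{eq:Markov_proj_SDE} to its associated martingale problem, invoke the classical principle that a well-posed martingale problem with Markovian coefficients generates a Markov family, and then upgrade to the Feller property using that the effective state space is finite and the rates are bounded.

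First I would record the reduction to the martingale problem. By Proposition \ref{prop:kurtz_mp_sde_equiv} and the equivalence of \cite{Kurtz2011}, a c\`adl\`ag process is a weak solution of \eqref{eq:Markov_proj_SDE} if and only if its law solves the martingale problem for $\Gen$ with $\k$ given by \eqref{eq:k_comp}. Consequently, the hypothesis that \eqref{eq:Markov_proj_SDE} is unique in law is equivalent to well-posedness of this martingale problem, and the same well-posedness argument applies for every deterministic initial condition $x\in\stateS^\vertexS$, so uniqueness holds for every initial law. Note that the coefficients $\rmjmp_v^j(s,\cdot)$ depend on the past only through the present state and the time $s$, so $\Gen$ is a genuinely Markovian (time-inhomogeneous) generator on the finite state space $\stateS^\vertexS\subset\R^d$.

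Next I would derive the Markov property from uniqueness by the standard regular-conditional-distribution argument (as in the classical theory of Stroock--Varadhan and Ethier--Kurtz). To dispense with the time dependence, I pass to the time-homogeneous space--time process $(t,\Xmp(t))$, whose martingale problem is well-posed exactly when that for $\Gen$ is. Fix $s\ge 0$ and let $\Qmb$ denote the law of $\Xmp$ on $\Omega=\Dmc(\R_+:\R^d)$. Since $\Omega$ is Polish, a regular conditional distribution of $\Qmb$ given $\Fmc_s$ exists, and using the defining martingale property of $M^f$ from Definition \ref{def:larsson_long_mtg_pblm} one checks that, for $\Qmb$-a.e.\ $\omega$, the time-shifted conditional law solves the martingale problem started from $\Xmp(s)(\omega)$. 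Uniqueness then forces this conditional law to be a measurable function of $\Xmp(s)(\omega)$ (and $s$) alone, which is precisely the Markov property and yields a transition function $P_{s,t}(x,\cdot)$ obeying the Chapman--Kolmogorov relations. For the Feller property I would then use that $\stateS^\vertexS$ is finite, hence discrete and locally compact, so that $C_0$ coincides with the space of all functions on the state space and the requirement that the semigroup $T_{s,t}f(x):=\int f\,\diff P_{s,t}(x,\cdot)$ map $C_0$ into $C_0$ and depend continuously on $x$ is automatic. It then remains only to verify strong continuity: the total rate of leaving any state is bounded by $\Lambda:=d|\Jmc|C$ by \eqref{eq:cond_rates_exist_MP}, so the probability of at least one jump in a time interval of length $h$ is at most $1-e^{-\Lambda h}$ uniformly over the initial state, whence $\|T_{s,s+h}f-f\|_\infty\le 2\|f\|_\infty(1-e^{-\Lambda h})\to 0$ as $h\downarrow 0$. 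Together with the semigroup property from the Markov step, this shows $\Xmp$ is Feller, and in particular Markov.

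The main obstacle is the middle step: establishing rigorously that the regular conditional distribution of the shifted process is again, for almost every $\omega$, a solution of the martingale problem, so that uniqueness can be invoked to collapse the dependence on the entire past to dependence on the present state. The time-inhomogeneity is handled cleanly by the space--time augmentation but requires care in matching the shifted generator, and one must ensure the conditional laws are chosen as a single version that solves the martingale problem simultaneously for a.e.\ $\omega$.
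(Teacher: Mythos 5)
Your proof is correct in outline but follows a genuinely different route from the paper. The paper disposes of the Feller property by casting \eqref{eq:Markov_proj_SDE} into the framework of a jump-SDE result of Xi, Yin and Zhu (\cite{xi2019jump}*{Proposition 4.2}), identifying the coefficients $b$, $\sigma\equiv 0$ and $c$ and checking that uniqueness in law supplies their Assumption 4.1 while their regularity hypothesis (Assumption 2.3) holds with $\rho(r)=r$; the Markov and Feller properties are then imported wholesale from that reference. You instead run the classical Stroock--Varadhan/Ethier--Kurtz programme: transfer uniqueness to the martingale problem via Proposition \ref{prop:kurtz_mp_sde_equiv}, deduce the Markov property from well-posedness by the regular-conditional-distribution argument (with the space--time augmentation to absorb the time-inhomogeneity of $\rmjmp_v^j(s,\cdot)$), and then get Feller essentially for free because the effective state space $\stateS^\vertexS$ is finite --- so $C_0$ is everything and only strong continuity needs checking, which your uniform bound $1-e^{-\Lambda h}$ on the jump probability delivers. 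Your route is more self-contained and, notably, handles the time-dependence of the rates explicitly, something the paper's citation of a time-homogeneous SDE result quietly elides; the price is that the conditional-law step you flag as the main obstacle genuinely requires the standard but fiddly argument of fixing a countable family of test functions and rational times to obtain a single exceptional null set, and you must also make explicit that uniqueness in law is needed from \emph{every} initial state (not just the one in the hypothesis), which you correctly note follows from the same well-posedness considerations as Proposition \ref{prop:MLFE-wellposed}. Neither point is a gap, only work left to be written out.
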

\begin{proof} We establish this using \cite{xi2019jump}*{Proposition 4.2}, which considers SDEs of the form
\begin{equation}\label{eq:xi-eq}
    \diff X (t)= b(X(t))\diff t+\sigma(X(t))\diff W(t) +\int_U c(X(t-),u)\tilde{N}( \diff t, \diff u),
\end{equation}
    where $(U,\Umc,\mu)$ is a sigma-finite measure space, $b : \Rd \rightarrow \Rd$ , $\sigma : \Rd \rightarrow \R^{d\times d}$ and $c : \Rd \times U \rightarrow\Rd$ are Borel measurable functions, $W$ is a $d$-dimensional Brownian motion, and $\tilde{N}$ is a compensated Poisson process on $\R\times U$ with intensity $\diff t \otimes \mu(u)$. To relate \eqref{eq:xi-eq} to  \eqref{eq:Markov_proj_SDE}, we set $\sigma\equiv0$, $U=\R_+\times \Jmc^d$, $\mu= \textrm{Leb}(\diff t) \times \countm$ (where we recall $\countm$ is the counting measure on the discrete space $\Umc$ and $\textrm{Leb}$ is the Lebesgue measure). Moreover, we have
    \begin{equation*}
        c(x,u)=c(x,(r,\xi))= \one_{\{\exists j\in\Jmc, v\in G :\ \xi=j\ev\}} j \one_{\{r<\rmjmp_v^j(t,x)\}},
    \end{equation*}
    and 
    \begin{equation*}
    b(x)=\int_U c(x,u) \mu(du).
    \end{equation*}
    In this regime, the uniqueness in law of the solution to \eqref{eq:Markov_proj_SDE} is equivalent to \cite{xi2019jump}*{Assumption 4.1}. Assumption 2.3 in \cite{xi2019jump} is satisfied with $\rho(r)=r$ and $\delta_0=0.5$ therein.
\end{proof}

\section{Proof of well-posedeness of the ODE} 
\label{sec:ode_well-posed}
\draftnote{This section is new as of Feb 2.}In this section we establish the proof of Proposition \ref{prop:ODE-wellposed} which establishes the existence and uniqueness of a solution to the ODE system \eqref{eq:MF-ODE-UGW}-\eqref{eq:MF-ODE-UGW_initial}. 

\begin{proof}[Proof of Proposition \ref{prop:ODE-wellposed}] We show that the right-hand side of \eqref{eq:MF-ODE-UGW} is Lipshitz in $\lawMF_t\in \Pmc^\offspring$, from which the assertion of the Proposition follows by invoking Picard's existence and uniqueness theorem.

Since the sum on right-hand side of \eqref{eq:MF-ODE} is over finitely many elements, it is enough to consider the functions $\Pmc^\offspring\ni q \mapsto q(\vec{a})\Psibar_{a_0,a_1}(q)$, for $\vec{a}\in C^\offspring$,
where 
\begin{equation}\label{eq:psibar}
    \Psibar_{a_0,a_1}(q):= \Psibar_{a_0,a_1}^{t,j}(q):= \frac{\sum_{\vec{b}\in C^\offspring} k(\vec{b})\rr^j(t,\vec{b}) q(\vec{b}) \one_{\{ b_\root=a_1,b_1=a_0\}}}{\sum_{\vec{c}\in C^\offspring} k(\vec{c}) q(\vec{c}) \one_{\{ c_\root=a_1,c_1=a_0\}}},
\end{equation}
for fixed $t\in\R_+$ and $j\in\Jmc$, where $\rr$ and $k(\vec{b})$ are defined in \eqref{def:deparametrize_r} and \eqref{eq:degree_of_vec} respectively.

We proceed by showing that the partial derivatives $\partial_{q(\vec{b})} q(\vec{a})\Psibar_{a_0,a_1}(q)$ are bounded. To simplify notation, for fixed $\vec{a}\in C^\offspring$ and $j\in\Jmc$, we let $N=N_{a_0,a_{1}}^j(q)$ and $D=D_{a_0,a_{1}}^j(q)$ be the numerator and denominator on the right-hand side of \eqref{eq:psibar}.
By Assumptions \ref{Ass:A_1} and \ref{Ass:A_2}, for all $\vec{c}\in C^\offspring$, $k(\vec{c})\leq d_{\max}<\infty$, where $\dmax=\max\{k : \offspring(k)>0\}$, and there exists $R\in(0,\infty)$ such that $\rr^j(t,\vec{c})<R$ for all $t\in\R_+$ and $j\in\Jmc$. It follows that 
\begin{equation}\label{eq:N_bound}
    N\leq \dmax R
\end{equation}
and
\begin{equation}\label{eq:D_bound}
    D\leq \dmax.
\end{equation}

When $\vec{b}\neq \vec{a}$, using the quotient rule we have that
\begin{equation}\label{eq:partial_d}
    \partial_{q(\vec{b})} q(\vec{a})\Psibar_{a_0,a_1}(q)=q(\vec{a})\frac{k(\vec{b})\one_{\{ b_\root=a_1,b_1=a_0\}} (\rr^j(r,\vec{b})D - N)}{D^2}.
\end{equation}

As the numerator in \eqref{eq:partial_d} is bounded, it is sufficient to bound $|\partial_{q(\vec{b})} q(\vec{a})\Psibar_{a_0,a_1}(q)|$ for $q$ such that $D$ is near $0$. 
Let 
\begin{equation*}
    \delta(q)=q(\{x\in C^\offspring : x_0=a_0, x_1=a_1\})= \sum_{\vec{c}\in C^\offspring} \one_{\{c_0=a_0, c_1=a_1\}}q(\vec{c}).
\end{equation*}
 Since every term in the sum is non-negative,  we have that for every $\vec{c}\in\Pmc^\offspring$ with $c_0=a_0$ and $c_1=a+1$,
\begin{equation*}
    q(\vec{c})\leq \delta(q).
\end{equation*}
In particular, we have $q(\vec{a})\leq \delta(q)$.
Furthermore, using \eqref{eq:psibar} and the bounds on $k(\vec{b})$, $\rr$, $N$ and $D$, we have that
\begin{equation*}
\delta(q)\leq D\leq d_{\max}\delta(q)   
\end{equation*}
and
\begin{equation*}
 N\leq d_{\max} R \delta(q).   
\end{equation*}
It follows that 
    
\begin{equation}\label{eq:partial_d_1}
    |\partial_{q(\vec{b})} q(\vec{a})\Psibar_{a_0,a_1}(q)| \leq \delta(q) \frac{2\delta(q) \dmax^2 R }{ (\delta(q))^2}   \leq 2 \dmax^2 R.
\end{equation}

Now, consider the case $\vec{b}=\vec{a}$. Then, by the product rule, we have

\begin{align*}
    \partial_{q(\vec{a})} q(\vec{a})\Psibar_{a_0,a_1}(q)&= \Psibar_{a_0,a_1}(q) \partial_{q(\vec{a})} q(\vec{a})+ q(\vec{a}) \partial_{q(\vec{a})} \Psibar_{a_0,a_1}(q)
    \\&= \Psibar_{a_0,a_1}(q) + q(\vec{a}) \partial_{q(\vec{a})} \Psibar_{a_0,a_1}(q).
\end{align*}
The second term is bounded as in \eqref{eq:partial_d_1}. The first term can be bounded as follows:
\begin{align*}
   | \Psibar_{a_0,a_1}(q)|=\left|\frac{N}{D}\right|\leq\frac{\dmax R \delta(q)}{\delta(q)}= \dmax R.
\end{align*}

Therefore, the partial derivatives  $\partial_{q(\vec{b})} q(\vec{a})\Psibar_{a_0,a_1}(q)$ are uniformly bounded on $C^\offspring$ and it follows that $q\rightarrow q(\vec{a})\Psibar^{j,t}_{a_0,a_1}(q)$ is globally Lipshitz for all $\vec{a}\in C^\offspring$, $t\in\R_+$ and $j\in \Jmc$. Existence and uniqueness of the solution to \eqref{eq:MF-ODE-UGW}-\eqref{eq:MF-ODE-UGW_initial} follows from Picard's theorem.

\end{proof}
\begin{funding}
The authors were supported by the Office of Naval Research under the Vannevar Bush Faculty Fellowship N0014-21-1-2887.
\end{funding}

\bibliography{biblio_draft}

@misc{baldassarri2024opiniond,
      title={Opinion dynamics on dense dynamic random graphs}, 
      author={Simone Baldassarri and Peter Braunsteins and Frank den Hollander and Michel Mandjes},
      year={2024},
      eprint={2410.14618},
      archivePrefix={arXiv},
      primaryClass={math.PR},
      url={https://arxiv.org/abs/2410.14618}, 
}

@article{xi2019jump,
title = {Jump type stochastic differential equations with non-Lipschitz coefficients: Non-confluence, Feller and strong Feller properties, and exponential ergodicity},
journal = {Journal of Differential Equations},
volume = {266},
number = {8},
pages = {4668-4711},
year = {2019},
issn = {0022-0396},
doi = {https://doi.org/10.1016/j.jde.2018.10.006},
url = {https://www.sciencedirect.com/science/article/pii/S0022039618305886},
author = {F. Xi and C. Zhu},
keywords = {Pathwise uniqueness, Non-confluence, Feller and strong Feller properties, Irreducibility, Exponential ergodicity, Feynman–Kac formula},
abstract = {This paper considers multidimensional jump type stochastic differential equations with super linear and non-Lipschitz coefficients. After establishing a sufficient condition for nonexplosion, this paper presents sufficient local non-Lipschitz conditions for pathwise uniqueness. The non-confluence property for solutions is investigated. Feller and strong Feller properties under local non-Lipschitz conditions are investigated via the coupling method. Sufficient conditions for irreducibility and exponential ergodicity are derived. As applications, this paper also studies multidimensional stochastic differential equations driven by Lévy processes and presents a Feynman–Kac formula for Lévy type operators.}
}

@article{AghRam17,
 AUTHOR = {Aghajani, Reza and Ramanan, Kavita},
     TITLE = {The hydrodynamic limit of a randomized load balancing network},
   JOURNAL = {Ann. Appl. Probab.},
  FJOURNAL = {The Annals of Applied Probability},
    VOLUME = {29},
      YEAR = {2019},
    NUMBER = {4},
     PAGES = {2114--2174},
      ISSN = {1050-5164},
   MRCLASS = {60K25 (60F17 60K35 68M20 90B15 90B22)},
  MRNUMBER = {3984253},
       DOI = {10.1214/18-AAP1444},
       URL = {https://doi.org/10.1214/18-AAP1444},
}

@article {VveDobKar96,
    AUTHOR = {Vvedenskaya, N. D. and Dobrushin, R. L. and Karpelevich, F.
              I.},
     TITLE = {A queueing system with a choice of the shorter of two
              queues---an asymptotic approach},
   JOURNAL = {Problemy Peredachi Informatsii},
  FJOURNAL = {Rossi\u{\i}skaya Akademiya Nauk. Problemy Peredachi Informatsii},
    VOLUME = {32},
      YEAR = {1996},
    NUMBER = {1},
     PAGES = {20--34},
      ISSN = {0555-2923},
   MRCLASS = {60K25 (68M20 90B22)},
  MRNUMBER = {1384927},
MRREVIEWER = {O. K. Zakusilo},
}

@article{Mit01,
  title={The power of two choices in randomized load balancing},
  author={Mitzenmacher, Michael},
  journal={IEEE Transactions on Parallel and Distributed Systems},
  volume={12},
  number={10},
  pages={1094--1104},
  year={2001},
  publisher={IEEE}
}

@article{lacker2023marginal,
     author = {Lacker, D. and Ramanan, K. and Wu, R.},
	
	journal = {Probability Theory and Related Fields},
	
	pages = {817--884},
	title = {Marginal dynamics of interacting diffusions on unimodular {G}alton--{W}atson trees},
	url = {https://doi.org/10.1007/s00440-023-01226-4},
	volume = {187},
	year = {2023}}

@article{lacker2023local,
author = {D. Lacker and K. Ramanan and R. Wu},
title = {{Local weak convergence for sparse networks of interacting processes}},
volume = {33},
journal = {The Annals of Applied Probability},
number = {2},
publisher = {Institute of Mathematical Statistics},
pages = {843 -- 888},
keywords = {configuration model, discrete-time {M}arkov chains, Erdős–Rényi graphs, Gibbs measures, Interacting diffusions, Local weak convergence, {M}arkov random fields, mean-field limits, nonlinear {M}arkov processes, Probabilistic cellular automata, Random graphs, sparse graphs, unimodularity},
year = {2023},
doi = {10.1214/22-AAP1830},
URL ={https://doi.org/10.1214/22-AAP1830}
}

@article{GanRam2024, 
  doi = {10.48550/ARXIV.2205.01587},
  
  url = {https://arxiv.org/abs/2205.01587},
  
  author = {Ganguly, A. and Ramanan, K.},
  
  keywords = {Probability (math.PR), FOS: Mathematics, FOS: Mathematics, 60K35, 60J74, 60J80 (Primary) 60K25, 60F15 (Secondary)},
  
  title = {Hydrodynamic limits of non-Markovian interacting particle systems on sparse graphs},
  
  year = {2024},
  journal = {Electronic Journal of Probability},
  volume = {29},
  number = {185},
  pages = {1 -- 63},
}

@BOOK{shiryaev2011optimal,
  title     = "Optimal Stopping Rules",
  author    = "Shiryaev, A.",
  editor    = "Rozovskii, B and Grimmett, G",
  publisher = "Springer",
  series    = "Stochastic Modelling and Applied Probability",
  edition   =  1,
  month     =  nov,
  year      =  2007,
  address   = "Berlin, Germany",
  language  = "en"
}

@book{durrett2019probability, place={Cambridge}, edition={5}, series={Cambridge Series in Statistical and Probabilistic Mathematics}, title={Probability: Theory and Examples}, publisher={Cambridge University Press}, author={Durrett, R.}, year={2019}, collection={Cambridge Series in Statistical and Probabilistic Mathematics}}

@unpublished{bentata2009mimicking,
  TITLE = {{Mimicking the marginal distributions of a semimartingale}},
  AUTHOR = {Bentata, A. and Cont, R.},
   year={2012},
note ={preprint arXiv:0910.3992},
      eprint={0910.3992},
      archivePrefix={arXiv},
      primaryClass={math.PR},
      url={https://arxiv.org/abs/0910.3992}, 
}

@article{gyongy1986mimicking,
	author = {Gy{\"o}ngy, I. },
	journal = {Probability Theory and Related Fields},
	number = {4},
	pages = {501--516},
	title = {Mimicking the one-dimensional marginal distributions of processes having an ito differential},
	volume = {71},
	year = {1986}}

@misc{larsson2024markovian,
      title={Markovian projections for It\^o semimartingales with jumps}, 
      author={M. Larsson and S. Long},
      year={2024},
      eprint={2403.15980},
      archivePrefix={arXiv},
      primaryClass={math.PR},
      url={https://arxiv.org/abs/2403.15980}, 
}

@article{kopfer2023markov,
title = {Markov projection of semimartingales — Application to comparison results},
journal = {Stochastic Processes and their Applications},
volume = {162},
pages = {361-386},
year = {2023},
issn = {0304-4149},
doi = {https://doi.org/10.1016/j.spa.2023.04.018},
url = {https://www.sciencedirect.com/science/article/pii/S0304414923000923},
author = {B. Köpfer and L. Rüschendorf},
keywords = {Markov projection, Pseudo-differential operators, Ordering of semimartingales},
abstract = {In this paper we derive generalizations of comparison results for semimartingales. Our results are based on Markov projections and on known comparison results for Markov processes. The first part of the paper is concerned with an alternative method for the construction of Markov projections of semimartingales. In comparison to the construction in Bentata and Cont (2009) which is based on the solution of a well-posed martingale problem, we make essential use of pseudo-differential operators as investigated in Böttcher (2008) and of fundamental solutions of related evolution problems. This approach allows to dismiss with some boundedness assumptions on the differential characteristics in the martingale approach. As consequence of the construction of Markov projections, comparison results for path-independent functions (European options) of semimartingales can be reduced to the well investigated problem of comparison of Markovian semimartingales. The Markov projection approach to comparison results does not require one of the semimartingales to be Markovian, which is a common assumption in literature. An idea of Brunick and Shreve (2013) to mimick updated processes leads to a related reduction result to the Markovian case and thus to the comparison of related generators. As consequence, a general comparison result is also obtained for path-dependent functions of semimartingales.}
}

@article{BrunickShreve2013,
author = {G. Brunick and S. Shreve},
title = {{Mimicking an Itô process by a solution of a stochastic differential equation}},
volume = {23},
journal = {The Annals of Applied Probability},
number = {4},
publisher = {Institute of Mathematical Statistics},
pages = {1584 -- 1628},
keywords = {derivative security pricing, Itô process, Stochastic differential equation, stochastic volatility models},
year = {2013},
doi = {10.1214/12-AAP881},
URL = {https://doi.org/10.1214/12-AAP881}
}

@book{Kurtz2011,
author="Kurtz, T. G.",
editor="Crisan, Dan",
title="Equivalence of Stochastic Equations and Martingale Problems",
bookTitle="Stochastic Analysis 2010",
year="2011",
publisher="Springer Berlin Heidelberg",
address="Berlin, Heidelberg",
pages="113--130",
abstract="The fact that the solution of a martingale problem for a diffusion process gives a weak solution of the corresponding It{\^o} equation is well-known since the original work of Stroock and Varadhan. The result is typically proved by constructing the driving Brownian motion from the solution of the martingale problem and perhaps an auxiliary Brownian motion. This constructive approach is much more challenging for more general Markov processes where one would be required to construct a Poisson random measure from the sample paths of the solution of the martingale problem. A ``soft'' approach to this equivalence is presented here, which begins with a joint martingale problem for the solution of the desired stochastic equation and the driving processes and applies a Markov mapping theorem to show that any solution of the original martingale problem corresponds to a solution of the joint martingale problem. These results coupled with earlier results on the equivalence of forward equations and martingale problems show that the three standard approaches to specifying Markov processes (stochastic equations, martingale problems, and forward equations) are, under very general conditions, equivalent in the sense that existence and/or uniqueness of one implies existence and/or uniqueness for the other two.",
isbn="978-3-642-15358-7",
doi="10.1007/978-3-642-15358-7_6",
url="https://doi.org/10.1007/978-3-642-15358-7_6"
}

@phdthesis{Gangulythesis,
author={Ganguly, A.},
title={Non-{M}arkovian Interacting Particle Systems on Large Sparse Graphs: Hydrodynamic Limits and Marginal Characterizations},
school={Brown University},
year={2022},
note={ Brown Digital Repository, Brown University Library}
}

@unpublished{Bordenave2016,
author={C. Bordenave},
title={Lecture notes on random graphs and probabilistic combinatorial optimization},
year={2016},
url={https://www.math.univ-toulouse.fr/~bordenave/coursRG.pdf},
}

@misc{GanRamUGW,
  author = {Ganguly, A. and Ramanan, K.},
  title = {{Local Field Equations for jump processes on Unimodular Galton-Watson Trees}}, 
  year = {2025},
note= {in preparation.} 
}

@misc{GanRamTree,
  author = {Ganguly, A. and Ramanan, K.},
  title = {{Marginal dynamics of interacting jump processes on regular trees: stationarity properties and Markov local-field equations}}, 
  year = {2025},
note= {preprint.} 
}

@misc{GanRamMRF,
  doi = {10.48550/ARXIV.2210.09253},
  
  url = {https://arxiv.org/abs/2210.09253},
  
  author = {Ganguly, A. and Ramanan, K.},
  
  keywords = {Probability (math.PR), FOS: Mathematics, FOS: Mathematics, 60K35, 60J74, 60J80 (Primary) 60K25 (Secondary)},
  
  title = {Interacting Jump Processes Preserve Semi-Global {M}arkov Random Fields on Path Space},
  year = {2022},
  
  copyright = {Creative Commons Attribution 4.0 International},
note= {Preprint arXiv:2210.09253}
}

@article{MoosTrucc23,
    doi = {10.1371/journal.pcbi.1010852},
    author = {Moosavi, S. AND Truccolo, W.},
    journal = {PLOS Computational Biology},
    publisher = {Public Library of Science},
    title = {Criticality in probabilistic models of spreading dynamics in brain networks: Epileptic seizures},
    year = {2023},
    month = {02},
    volume = {19},
    url = {https://doi.org/10.1371/journal.pcbi.1010852},
    pages = {1-41},
    number = {2},
}

@article{cocomello2024generalized,
    author = {Cocomello, J. and Li, C. and Ramanan, K},
    title =  {Exact description of limiting generalized epidemic dynamics on locally tree-like graphs},
    year = {2025},
note = {in preparation.}}

@misc{cocomello2023exact,
      title={Exact description of limiting SIR and SEIR dynamics on locally tree-like graphs}, 
      author={J. Cocomello and K. Ramanan},
      year={2023},
      eprint={2309.08829},
      archivePrefix={arXiv},
      primaryClass={math.PR}
}

@article{Fournier_Locherbach_2016,
author = {N. Fournier and E. L{\"o}cherbach},
title = {{On a toy model of interacting neurons}},
volume = {52},
journal = {Annales de l'Institut Henri Poincaré, Probabilités et Statistiques},
number = {4},
publisher = {Institut Henri Poincaré},
pages = {1844 -- 1876},
year = {2016},
doi = {10.1214/15-AIHP701},
URL = {https://doi.org/10.1214/15-AIHP701}
}

@article{truccolo_2005,
author = {Truccolo, W. and Eden, U. T. and Fellows, M. R. and Donoghue, J. P. and Brown, E. N.},
year = {2005},
title = {A point process framework for relating neural spiking activity to spiking history, neural ensemble, and extrinsic covariate effects},
journal = {Journal of Neurophysiology},
volume = {93},
pages = {1074–1089},
doi = {10.1152/jn.00697.2004}
}

@article{Pastor_Satorras_2015,
   title={Epidemic processes in complex networks},
   volume={87},
   OPTISSN={1539-0756},
   OPTurl={http://dx.doi.org/10.1103/RevModPhys.87.925},
   OPTDOI={10.1103/revmodphys.87.925},
   number={3},
   journal={Reviews of Modern Physics},
   publisher={American Physical Society (APS)},
   author={Pastor-Satorras, R. and Castellano, C. and Van Mieghem, P. and Vespignani, A.},
   year={2015},
   month={8},
   pages={925–979}
}

@article{McKean_66,
author = {H. P. McKean },
title = {A CLASS OF MARKOV PROCESSES ASSOCIATED WITH NONLINEAR PARABOLIC EQUATIONS},
journal = {Proceedings of the National Academy of Sciences},
volume = {56},
number = {6},
pages = {1907-1911},
year = {1966},
doi = {10.1073/pnas.56.6.1907},
URL = {https://www.pnas.org/doi/abs/10.1073/pnas.56.6.1907},
eprint = {https://www.pnas.org/doi/pdf/10.1073/pnas.56.6.1907}
}

@Article{Dobrushin1979,
author={Dobrushin, R. L.},
title={Vlasov equations},
journal={Functional Analysis and Its Applications},
year={1979},
month={Apr},
day={01},
volume={13},
number={2},
pages={115-123},
issn={1573-8485},
doi={10.1007/BF01077243},
url={https://doi.org/10.1007/BF01077243}
}

@article{ZAN_2022,
title = {Multivariate Hawkes processes on inhomogeneous random graphs},
journal = {Stochastic Processes and their Applications},
volume = {152},
pages = {86-148},
year = {2022},
issn = {0304-4149},
doi = {https://doi.org/10.1016/j.spa.2022.06.019},
url = {https://www.sciencedirect.com/science/article/pii/S0304414922001545},
author = {Z. Agathe-Nerine},
keywords = {Multivariate nonlinear Hawkes processes, Mean-field systems, Neural networks, Spatially extended system, Random graph, Graph convergence},
abstract = {We consider a population of N interacting neurons, represented by a multivariate Hawkes process: The firing rate of each neuron depends on the history of the connected neurons. Contrary to the mean-field framework where the interaction occurs on the complete graph, the connectivity between particles is given by a random possibly diluted and inhomogeneous graph where the probability of presence of each edge depends on the spatial position of its vertices. We address the well-posedness of this system and Law of Large Numbers results as N→∞. A crucial issue will be to understand how spatial inhomogeneity influences the large time behavior of the system.}
}

@unpublished{jabin2022,
      title={Mean-field limit of non-exchangeable systems}, 
      author={P. Jabin and D. Poyato and J. Soler},
      year={2022},
      note={2112.15406},
      archivePrefix={arXiv},
      primaryClass={math.PR}
}

@article{Dav24,
author = {M. Davydov},
title = {{Propagation of chaos and Poisson hypothesis for replica mean-field models of intensity-based neural networks}},
volume = {34},
journal = {The Annals of Applied Probability},
number = {2},
publisher = {Institute of Mathematical Statistics},
pages = {2107 -- 2135},
keywords = {intensity-based model, mean-field theory, neural network, point process, Poisson approximation, Poisson hypothesis, replica model},
year = {2024},
doi = {10.1214/23-AAP2015},
URL = {https://doi.org/10.1214/23-AAP2015}
}

@article{Baccelli_2019,
   title={Replica-Mean-Field Limits for Intensity-Based Neural Networks},
   volume={18},
   OPTISSN={1536-0040},
   OPTurl={http://dx.doi.org/10.1137/19m1243877},
   OPTDOI={10.1137/19m1243877},
   number={4},
   journal={SIAM Journal on Applied Dynamical Systems},
   publisher={Society for Industrial & Applied Mathematics (SIAM)},
   author={Baccelli, F. and Taillefumier, T.},
   year={2019},
   month={1},
   pages={1756–1797}
}

@book{Kleinrock_1,
	author = {Leonard Kleinrock},
	publisher = {Wiley Interscience},
	title = {{Queueing Systems}},
	volume = {I: Theory},
	year = {1975},
}

@article{Vladimirov_2018,
   title={Propagation of Chaos and {P}oisson Hypothesis},
   volume={54},
   OPTISSN={1608-3253},
   OPTurl={http://dx.doi.org/10.1134/S0032946018030080},
   OPTDOI={10.1134/s0032946018030080},
   number={3},
   journal={Problems of Information Transmission},
   publisher={Pleiades Publishing Ltd},
   author={Vladimirov, Alexander A. and Pirogov, Serge A. and Rybko, Alexander N. and Shlosman, Senya B.},
   year={2018},
   month={7},
   pages={290–299}
}

@article{aldous-lyons,
author={D. Aldous and R. Lyons},
year={2007}, 
title={Processes on unimodular random networks},
journal={Electronic Journal of Probability},
volume={12},
note={paper no. 54}, 
pages={1454--1508}
}

@article{BenjaminiSchramm,
author = {I. Benjamini and O. Schramm},
title = {{Recurrence of Distributional Limits of Finite Planar Graphs}},
volume = {6},
journal = {Electronic Journal of Probability},
number = {none},
publisher = {Institute of Mathematical Statistics and Bernoulli Society},
pages = {1 -- 13},
keywords = {Circle packing, mass trasport, random triangulations, Random walks, volume growth},
year = {2001},
doi = {10.1214/EJP.v6-96},
URL = {https://doi.org/10.1214/EJP.v6-96}
}

@article{ditlevsen_locherbach_17,
title = {Multi-class oscillating systems of interacting neurons},
journal = {Stochastic Processes and their Applications},
volume = {127},
number = {6},
pages = {1840-1869},
year = {2017},
issn = {0304-4149},
doi = {https://doi.org/10.1016/j.spa.2016.09.013},
url = {https://www.sciencedirect.com/science/article/pii/S0304414916301739},
author = {S. Ditlevsen and E. Löcherbach},
keywords = {Multivariate nonlinear Hawkes processes, Mean-field approximations, Piecewise deterministic Markov processes, Multi-class systems, Oscillations, Diffusion approximation},
abstract = {We consider multi-class systems of interacting nonlinear Hawkes processes modeling several large families of neurons and study their mean field limits. As the total number of neurons goes to infinity we prove that the evolution within each class can be described by a nonlinear limit differential equation driven by a Poisson random measure, and state associated central limit theorems. We study situations in which the limit system exhibits oscillatory behavior, and relate the results to certain piecewise deterministic Markov processes and their diffusion approximations.}
}

@book{vanderHofstad2024vol2,
 place={Cambridge}, 
series={Cambridge Series in Statistical and Probabilistic Mathematics}, 
title={Random Graphs and Complex Networks}, publisher={Cambridge University Press}, author={van der Hofstad, R.}, year={2024}, collection={Cambridge Series in Statistical and Probabilistic Mathematics}}

@article{Oakes75,
 ISSN = {00219002},
 URL = {http://www.jstor.org/stable/3212408},
 abstract = {The self-exciting point process with exponential exciting function is investigated using the immigration birth representation of the process. The counting distribution is derived explicitly and some simpler interval properties are given. In particular, it is shown that the serial covariances of the interval sequence decrease monotonically to zero.},
 author = {D. Oakes},
 journal = {Journal of Applied Probability},
 number = {1},
 pages = {69--77},
 publisher = {Applied Probability Trust},
 title = {The Markovian Self-Exciting Process},
 urldate = {2025-01-21},
 volume = {12},
 year = {1975}
}

@misc{FraiLinOlvera24,
      title={Opinion dynamics on directed complex networks}, 
      author={Nicolas Fraiman and Tzu-Chi Lin and Mariana Olvera-Cravioto},
      year={2024},
      eprint={2209.00969},
      archivePrefix={arXiv},
      primaryClass={math.PR},
      url={https://arxiv.org/abs/2209.00969}, 
      pubstate={To appear in Mathematics of Operations Research}
}

@misc{AndOlvera24,
      title={Opinion dynamics on non-sparse networks with community structure}, 
      author={Panagiotis Andreou and Mariana Olvera-Cravioto},
      year={2024},
      eprint={2401.04598},
      archivePrefix={arXiv},
      primaryClass={math.PR},
      url={https://arxiv.org/abs/2401.04598}, 
}

@misc{RemcoShneer23,
      title={It is hard to kill fake news}, 
      author={Remco van der Hofstad and Seva Shneer},
      year={2023},
      eprint={2304.09958},
      archivePrefix={arXiv},
      primaryClass={math.PR},
      url={https://arxiv.org/abs/2304.09958}, 
}

@book{RemcoBook, 
place={Cambridge}, 
series={Cambridge Series in Statistical and Probabilistic Mathematics}, title={Random Graphs and Complex Networks}, 
publisher={Cambridge University Press}, 
author={Hofstad, Remco van der}, 
year={2016}, 
collection={Cambridge Series in Statistical and Probabilistic Mathematics}}

\end{document}